  \tikzset{mylabel/.style  args={at #1 #2  with #3}{
    postaction={decorate,
    decoration={
      markings,
      mark= at position #1
      with  \node [#2] {#3};
 } } } }
	\def\MR#1{}
\newcommand{\lk}{\mathrm{lk}}
\newcommand{\tr}{\operatorname{tr}}
\def\opn#1#2{\def#1{\operatorname{#2}}}
\opn\Cl{Cl} \opn\deg{deg} \opn\Stab{Stab} \opn\aff{aff} \opn\div{div}
\opn\cone{cone} \opn\End{End} \opn\mod{mod}  \opn\pdim{pdim} \opn\diag{diag} \opn\vert{vert} \opn\m{m} \opn\V{V}
\opn\Cone{Cone} \opn\Pyr{Pyr} \opn\max{max} \opn\min{min} \opn\int{int} \opn\rev{rev} \opn\ker{ker} \opn\lat{lat} \opn\pull{pull}
\opn\cok{coker} \opn\ant{ant}
\opn\inte{int}
\newcommand{\kk}{\mathbb{k}}
\newcommand{\NN}{\normalfont\mathbb{N}}
\newcommand{\ZZ}{\normalfont\mathbb{Z}}
\newcommand{\MM}{{\normalfont\mathfrak{m}}}
\newcommand{\mm}{{\normalfont\mathfrak{m}}}
\newcommand{\fkn}{{\normalfont\mathfrak{n}}}
\newcommand{\pp}{{\normalfont\mathfrak{p}}}
\newcommand{\depth}{\normalfont\text{depth}}
\newcommand{\grade}{\normalfont\text{grade}}
\newcommand{\Ext}{\normalfont\text{Ext}}
\newcommand{\Ker}{\normalfont\text{Ker}}
\newcommand{\ann}{\normalfont\text{Ann}}
\newcommand{\height}{{\normalfont\text{ht}}}
\newcommand{\Supp}{\normalfont\text{Supp}}
\newcommand{\Ass}{{\normalfont\text{Ass}}}
\newcommand{\Assh}{{\normalfont\text{Assh}}}
\newcommand{\Min}{{\normalfont\text{Min}}}
\newcommand{\Hom}{\normalfont\text{Hom}}
\newcommand{\FF}{\normalfont\mathcal{F}}
\newcommand{\Spec}{\normalfont\text{Spec}}
\def\f0{\mathbf{0}}
\def\1{\mathbf{1}}
\newtheorem{theorem}{Theorem}[section]
\newaliascnt{headcor}{headthm}
\newaliascnt{headconj}{headthm}
\newaliascnt{corollary}{theorem}
\newtheorem{corollary}[corollary]{Corollary}
\newaliascnt{claim}{theorem}
\newaliascnt{lemma}{theorem}
\newtheorem{lemma}[lemma]{Lemma}
\newaliascnt{conjecture}{theorem}
\newtheorem{conjecture}[conjecture]{Conjecture}
\newaliascnt{proposition}{theorem}
\newtheorem{proposition}[proposition]{Proposition}
\theoremstyle{definition}
\newaliascnt{definition}{theorem}
\newtheorem{definition}[definition]{Definition}
\newaliascnt{notation}{theorem}
\newaliascnt{condition}{theorem}
\newaliascnt{example}{theorem}
\newtheorem{example}[example]{Example}
\newaliascnt{examples}{theorem}
\newaliascnt{remark}{theorem}
\newtheorem{remark}[remark]{Remark}
\newaliascnt{question}{theorem}
\newaliascnt{questions}{theorem}
\newaliascnt{problem}{theorem}
\newaliascnt{construction}{theorem}
\newaliascnt{setup}{theorem}
\newtheorem{setup}[setup]{Setup}
\newaliascnt{algorithm}{theorem}
\newaliascnt{observation}{theorem}
\newaliascnt{defprop}{theorem}
\newaliascnt{fact}{theorem}
\newtheorem{fact}[fact]{Fact}
\def\equationautorefname~#1\null{(#1)\null}
\def\sectionautorefname~#1\null{Section #1\null}
\def\subsectionautorefname~#1\null{\S #1\null}
\definecolor{citepink}{HTML}{AA3377}
\newcommand{\fkm}{\mathfrak{m}}
\newcommand{\fkp}{\mathfrak{p}}
\newcommand{\fkq}{\mathfrak{q}}
\def\Ass{\operatorname{Ass}}
\def\Ann{\operatorname{Ann}}
\def\depth{\operatorname{depth}}
\def\Ext{\operatorname{Ext}}
\def\grade{\mathrm{grade}}
\def\height{\mathrm{ht}}
\def\Hom{\operatorname{Hom}}
\def\Im{\mathrm{Im}}
\def\Ker{\mathrm{Ker}}
\def\m{\mathfrak{m}}
\def\Spec{\operatorname{Spec}}
\def\Supp{\operatorname{Supp}}
\title[Canonical Traces of graded Fiber Products: Applications to Stanley--Reisner rings]{Canonical Traces of Graded Fiber Products: Applications to Disconnected Stanley--Reisner Rings}
\author{Shinya Kumashiro}
\address[Kumashiro]{Department of Mathematics, Osaka Institute of Technology, 5-16-1 Omiya, asahi-ku, Osaka, 535-8585, Japan}
\email{shinya.kumashiro@oit.ac.jp}
\email{shinyakumashiro@gmail.com}
\author{Sora Miyashita}
\address[Miyashita]
{Department of Pure And Applied Mathematics, Graduate School Of Information Science And Technology, Osaka University, Suita, Osaka 565-0871, Japan}
\email{u804642k@ecs.osaka-u.ac.jp}
\date{\today}
\keywords{Teter-type, fiber products, canonical trace, quasi-Gorenstein, nearly Gorenstein, Stanley--Reisner rings.}
\subjclass[2020]{Primary 13H10, 13A02; Secondary 05E40.}
\begin{document}

	\maketitle

\begin{abstract}
Recent work by Miyashita and Varbaro classified the canonical traces of Stanley–Reisner rings that are Gorenstein on the punctured spectrum, under the Cohen--Macaulay assumption. The purpose of this paper is to generalize the result to the non--Cohen--Macaulay case. 
First, we establish an explicit formula for the canonical trace of graded fiber products of Noetherian rings and apply it to Stanley--Reisner rings of disconnected simplicial complexes. This allows us to reduce the problem to the case of connected simplicial complexes. In that case, we succeeded in giving a complete classification without assuming the Cohen-Macaulay property. Finally, we combine these results to obtain a classification for disconnected simplicial complexes, complementing the work of Miyashita and Varbaro.
 \end{abstract}

\section{Introduction}
Let \( R \) be a Noetherian positively graded ring with a graded canonical module \( \omega_R \).  
The ideal
\[
\operatorname{tr}_R(\omega_R) := \sum_{\phi \in \operatorname{Hom}_R(\omega_R, R)} \phi(\omega_R)
\]
is called the \emph{canonical trace} of \( R \).
When $R$ is unmixed, we see that the canonical trace $\tr_R(\omega_R)$ describes the non-quasi-Gorenstein locus of $R$ (see \autoref{rem:gradedaoyamagoto}). In particular, $\tr_R(\omega_R)$ describes the non-Gorenstein locus, provided $R$ is Cohen-Macaulay. This observation has sparked significant interest in recent years, leading to an active line of research on the canonical trace, especially in the Cohen–Macaulay case  (\cite{miyazaki2021Gorenstein,hall2023nearly,dao2020trace,miyashita2024linear,herzog2019trace,celikbas2023traces,ficarra2024canonical,ficarra2024canonical!, caminata2021nearly,kumashiro2025nearly,bagherpoor2023trace,lu2024chain, lyle2024annihilators,miyazaki2024non,jafari2024nearly,moscariello2025nearly, miyashita2024canonical,miyashita2025pseudo,kimura2025trace}). 

In particular, very recently, \cite[Theorem~A]{miyashita2024canonical} classified canonical traces of Stanley--Reisner rings that are Gorenstein on the punctured spectrum, but crucially under the assumption that $R$ is Cohen–Macaulay. Let $\mm_R$ denote the graded maximal ideal of $R$. 

\begin{fact}(\cite[Theorem~A]{miyashita2024canonical})
Let $\Delta$ be a simplicial complex and $R=\kk[\Delta]$ the Stanley--Reisner ring of $\Delta$ over a field $\kk$. 
Assume that $R$ is Cohen-Macaulay. Then the following hold:
\begin{itemize}
\item[\rm (1)]
$R$ is Gorenstein on the punctured spectrum
if and only if 
$\tr_R(\omega_R) = \MM_R^i$ for some $i \in \{0,1,2\}$;
\item[\rm (2)]
$\tr_R(\omega_{R})=\MM_R$
if and only if
$\Delta$ is isomorphic either to a disjoint union of $n\ge 3$ vertices or to a path of length $n \geq 3$;
\item[\rm (3)]
$\tr_R(\omega_R)=\mm_R^2$ 
if and only if
$\Delta$ is a non-orientable $\kk$-homology manifold.
\end{itemize}
\end{fact}

The aim of this paper is to remove the assumption that ``$R$ is Cohen-Macaulay'' from \cite[Theorem~A]{miyashita2024canonical}. Of course, as mentioned before, the canonical trace in the non--Cohen--Macaulay case is likewise important for describing the non--quasi--Gorenstein locus. 
One of the main difficulties in removing the Cohen–Macaulay condition from \cite[Theorem~A]{miyashita2024canonical} arises from disconnected simplicial complexes. 
Indeed, a typical example of a non--Cohen--Macaulay Stanley--Reisner ring is one arising from a disconnected simplicial complex (in fact, any disconnected simplicial complex of dimension at least one yields a non–Cohen–Macaulay ring). 
On the other hand, such a Stanley--Reisner ring is isomorphic to the fiber product of the Stanley--Reisner rings of the connected components of the given simplicial complex (see \autoref{lem:SRfiberproduct}).

With these observations in mind, our first main theorem computes the canonical trace for general fiber products, going beyond the setting of Stanley--Reisner rings.
Let $A$, $B$ be positively graded Noetherian rings with a field $\kk=A_0=B_0$. Let $f:A \to \kk$ and $g: B \to \kk$ be natural graded projections of graded rings. Then, 
$$
R := A \times_\kk B = \{(a, b) \in A \times B : f(a) = g(b)\}
$$
is called the fiber product of $A$ and $B$ over $\kk$. 
For simplicity, we define 
\[
\tr_R^\dagger(\omega_R) :=
\begin{cases}
\tr_R(\omega_R) & \text{if } R \text{ is not quasi-Gorenstein}, \\
\mm_R & \text{if } R \text{ is quasi-Gorenstein,}
\end{cases}
\]
(see \autoref{def:dagertrace} and \autoref{def:daggertukaimasita}). 
\( (0) :_R \mathfrak{m}_R \) denotes the annihilator of \( \mathfrak{m}_R \) in \( R \). 
With the notation, our first main result in this paper is stated as follows:

\begin{theorem}[{\autoref{aaa}}]\label{THM:nice!}
Assume that $A \neq A_0$,
$B \neq B_0$ and $\dim(R)\ne 1$.
Then 
\[
\tr_R(\omega_R) = \begin{cases}
    \tr_A^\dagger(\omega_{A})R \oplus \tr_B^\dagger(\omega_{B})R & \text{if } \dim(A)=\dim(B) \\
    \tr_A^\dagger(\omega_{A})R \oplus ((0):_B \MM_B)R & \text{if } \dim(A)>\dim(B) \\    
    ((0):_A \MM_A)R \oplus \tr_B^\dagger(\omega_{B})R & \text{if } \dim(A)<\dim(B).
\end{cases}
\]
\end{theorem}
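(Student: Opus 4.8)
The plan is to reduce the statement to two computations: first identifying $\omega_R$ as an $R$-module, and then evaluating $\Hom_R(\omega_R,R)$ inside the ambient product $A\times B$. Throughout I use that $R\subseteq A\times B$ with $\MM_R=\fkn_A\oplus\fkn_B$, $R/\fkn_B\cong A$ and $R/\fkn_A\cong B$, where $\fkn_A=\MM_A\times(0)$ and $\fkn_B=(0)\times\MM_B$ are the pullbacks of the irrelevant ideals of $A$ and $B$; a homomorphism $\omega_A\to R$ will be handled by decomposing it into its two components $\omega_A\to A$ and $\omega_A\to B$.

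I would first determine $\omega_R$ by local cohomology. Applying $H^\bullet_{\MM_R}(-)$ to $0\to R\to A\times B\to\kk\to0$, with last map $(a,b)\mapsto f(a)-g(b)$, and using that $\sqrt{\MM_R(A\times B)}$ is the irrelevant ideal of $A\times B$ while $\Spec(A\times B)=\Spec A\sqcup\Spec B$, one gets $H^i_{\MM_R}(A\times B)\cong H^i_{\MM_A}(A)\oplus H^i_{\MM_B}(B)$ and $H^i_{\MM_R}(\kk)=0$ for $i\ne0$. Put $d=\dim R=\max(\dim A,\dim B)$. For $d\ge2$ the long exact sequence forces $H^d_{\MM_R}(R)\cong H^d_{\MM_A}(A)\oplus H^d_{\MM_B}(B)$, so taking graded Matlis duals gives $\omega_R\cong\omega_A\oplus\omega_B$ when $\dim A=\dim B$, and $\omega_R\cong\omega_A$ (resp. $\omega_B$) when $\dim A>\dim B$ (resp. $<$), the missing summand vanishing since $H^d_{\MM_B}(B)=0$ for $d>\dim B$; here $\omega_A,\omega_B$ are $R$-modules via $R\twoheadrightarrow A$, $R\twoheadrightarrow B$. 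This also shows why $\dim R=1$ is excluded: there $H^0_{\MM_R}(\kk)=\kk\ne0$ enters the computation of $H^1_{\MM_R}(R)$ and $\omega_R$ need not be a pullback. The case $\dim R=0$ (which forces $\dim A=\dim B=0$) must be handled on its own: the exact sequence above is then the degree-zero local cohomology sequence, and dualizing gives $\omega_R\cong(\omega_A\oplus\omega_B)/\kk$, so one argues directly from $0\to\kk\to\omega_A\oplus\omega_B\to\omega_R\to0$, or invokes the Teter-type description of fiber products from the earlier sections; the conclusion is the same, so below I assume $d\ge2$.

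Since $\Hom_R(-,R)$ and $\tr_R(-)$ are additive on direct sums, it suffices to compute $\tr_R(\omega_A)$ and $\tr_R(\omega_B)$ with $\omega_A,\omega_B$ viewed as $R$-modules, and then to combine. Take $\phi\in\Hom_R(\omega_A,R)$ and write $\phi=(\phi_A,\phi_B)$ for its components along $R\hookrightarrow A\times B$. As $\omega_A$ is an $A=R/\fkn_B$-module, $\phi_A$ is $A$-linear, so $\Im\phi_A\subseteq\tr_A(\omega_A)$; as $\fkn_B$ annihilates $\omega_A$, we get $\Im\phi_B\subseteq(0:_B\fkn_B)=(0:_B\MM_B)\subseteq\MM_B$, using $B\ne\kk$; hence $g\circ\phi_B=0$, and the compatibility $f\circ\phi_A=g\circ\phi_B$ forces $\Im\phi_A\subseteq\MM_A$. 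Thus $\Im\phi_A\subseteq\tr_A(\omega_A)\cap\MM_A=\tr_A^\dagger(\omega_A)$, the last equality holding in both branches of the definition since $\tr_A(\omega_A)=A$ precisely when $A$ is quasi-Gorenstein (cf. \autoref{rem:gradedaoyamagoto}). Conversely, the maps $(\phi_A,0)$ with $\Im\phi_A\subseteq\MM_A$ contribute images summing to $\tr_A^\dagger(\omega_A)\times(0)$, and the maps $(0,\phi_B)$ with $\phi_B\colon\omega_A\to(0:_B\MM_B)$ arbitrary (equivalently, $\kk$-linear maps $\omega_A/\MM_A\omega_A\to(0:_B\MM_B)$, since $\MM_R$ kills $(0:_B\MM_B)$ and $\omega_A/\MM_A\omega_A\ne0$) contribute $(0)\times(0:_B\MM_B)$. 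Hence
\[
\tr_R(\omega_A)=\tr_A^\dagger(\omega_A)\times(0:_B\MM_B)=\tr_A^\dagger(\omega_A)R\oplus((0):_B\MM_B)R,
\]
and symmetrically $\tr_R(\omega_B)=((0):_A\MM_A)R\oplus\tr_B^\dagger(\omega_B)R$.

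Finally I would assemble the cases. If $\dim A\ne\dim B$, the previous step gives $\omega_R\cong\omega_A$ (resp. $\omega_B$), which is precisely the second (resp. third) case of the theorem. If $\dim A=\dim B$ (recall $\dim R\ne1$, so these are $\ge2$), then $\omega_R\cong\omega_A\oplus\omega_B$ gives
\[
\tr_R(\omega_R)=\tr_R(\omega_A)+\tr_R(\omega_B)=\big(\tr_A^\dagger(\omega_A)+(0:_A\MM_A)\big)\times\big((0:_B\MM_B)+\tr_B^\dagger(\omega_B)\big),
\]
so the remaining point is to absorb the socle terms, i.e. to show $(0:_A\MM_A)\subseteq\tr_A^\dagger(\omega_A)$ and likewise for $B$; I would isolate this as a lemma. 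It is trivial when $A$ is quasi-Gorenstein (then $\tr_A^\dagger(\omega_A)=\MM_A$, and such $A$ of dimension $\ge2$ even has zero socle since the canonical module satisfies $S_2$, forcing $\depth A\ge2$), and otherwise it asserts that the socle of $A$ lies in its canonical trace. The two real obstacles I anticipate are: (a) the case $\dim R=0$, where $\omega_R$ is a cokernel rather than a direct sum, so the trace must be extracted from $0\to\kk\to\omega_A\oplus\omega_B\to\omega_R\to0$ together with $\Hom_R(\kk,R)=(0:_R\MM_R)$ — precisely the situation the Teter-type machinery is built for; and (b) the socle containment just mentioned. The hypotheses enter sharply: $A\ne\kk$ and $B\ne\kk$ are what make $(0:_B\MM_B)\subseteq\MM_B$ (hence $g\circ\phi_B=0$), without which the formula fails, and $\dim R\ne1$ is used exactly as explained above.
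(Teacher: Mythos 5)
Your main line --- identify $\omega_R$ with $\omega_A\oplus\omega_B$ (or with the single summand of top dimension) when $\dim(R)\ge 2$, then compute $\tr_R(\omega_A)$ by splitting a homomorphism $\omega_A\to R\subseteq A\times B$ into its two components and using $f\circ\phi_A=g\circ\phi_B$ --- is exactly the paper's route (\autoref{lem:important1}, \autoref{thm:main0}, \autoref{lem:OK1}), and that part of your computation is correct. Your obstacle (b), the containment $(0):_A\MM_A\subseteq\tr_A^{\dagger}(\omega_A)$, is also harmless: for a non-quasi-Gorenstein $A$ the socle is a $\kk$-vector space, hence a quotient of a direct sum of copies of $\omega_A$, so it lies in $\tr_A(\omega_A)$ (this is \autoref{lem:thanks}~(3)); in the quasi-Gorenstein case it lies in $\MM_A$ because $A\ne\kk$.

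The genuine gap is the case $\dim(R)=0$, which you defer and never close. There $\omega_R$ is only a quotient $(\omega_A\oplus\omega_B)/N$ with $N\cong\kk$; the surjection $\omega_A\oplus\omega_B\twoheadrightarrow\omega_R$ gives $\tr_R(\omega_R)\subseteq\tr_R(\omega_A\oplus\omega_B)$ for free, but the inclusion you actually need is the reverse one: a homomorphism $\omega_A\oplus\omega_B\to R$ must be shown to kill $N$ so that it descends to $\omega_R$. Neither the identity $\Hom_R(\kk,R)=(0):_R\MM_R$ nor the Teter-type results of Section~4 (which concern whether the trace is the image of a \emph{single} map) accomplish this. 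The paper's argument is: since $\tr_R(\omega_A\oplus\omega_B)\subseteq\MM_R$, every $\varphi:\omega_A\oplus\omega_B\to R$ satisfies $\varphi\bigl(((0):_R\MM_R)(\omega_A\oplus\omega_B)\bigr)\subseteq((0):_R\MM_R)\MM_R=0$, so it suffices to locate $N$ inside $((0):_R\MM_R)(\omega_A\oplus\omega_B)$ (\autoref{tracelem}); and that containment is where zero-dimensionality is genuinely used, via the equality $((0):_A\MM_A)\omega_A=(0):_{\omega_A}\MM_A$ for Artinian $A$ (\autoref{lem37}, which rests on faithfulness of $\omega_A$ and on $(0):_{\omega_A}\MM_A$ being one-dimensional over $\kk$). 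Without an argument of this kind your proof covers only $\dim(R)\ge 2$.
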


\autoref{THM:nice!} fails in general in Krull dimension one~(see \autoref{rem:OK...}); nevertheless, we conjecture that, under appropriate hypotheses, one can extend  \autoref{THM:nice!} to dimension one (see \autoref{conjX}).


We now return to Stanley--Reisner rings. By \autoref{THM:nice!}, it is enough to consider connected simplicial complexes. The question of whether every nearly Gorenstein Stanley--Reisner ring of dimension at least three is Gorenstein, originally posed by the second author in \cite[Section~4]{miyashita2024levelness}, was one of the main motivations for \cite{miyashita2024canonical}; it was answered affirmatively in \cite[Corollary~3.5]{miyashita2024canonical}.
The result below removes the Cohen--Macaulay assumption from this statement, giving a complete generalization of \cite[Corollary~3.5]{miyashita2024canonical} to the non-Cohen--Macaulay setting; it may also be viewed as a non-Cohen--Macaulay analogue of \cite[Theorem~A]{miyashita2024canonical}.

\begin{theorem}[{see \autoref{thm:canonicalsquare}}]\label{thm:canonicalsquareee}
Let $\Delta$ be a connected simplicial complex and set $R=\kk[\Delta]$.
Then the following hold:
\begin{itemize}
\item[\rm (1)]
Assume that $R$ is Cohen--Macaulay on the punctured spectrum.
Then
$\sqrt{\tr_R(\omega_R)}=\mm_R$
if and only if 
$\tr_R(\omega_R) = \MM_R^i$ for some $i \in \{0,1,2\}$;
\item[\rm (2)]
$\tr_R(\omega_{R})=\MM_R$
if and only if
$\Delta$ is isomorphic to a path of length $n \geq 3$;
\item[\rm (3)]
Assume that $R$ is Cohen--Macaulay on the punctured spectrum.
Then
$\tr_R(\omega_R)=\mm_R^2$ 
if and only if
$\Delta$ is a non-orientable $\kk$-homology manifold.
\end{itemize}
In particular, if $\tr_R(\omega_R) \supseteq \mm_R$
and $\dim(R) \ge 3$,
then $R$ is quasi-Gorenstein.
\end{theorem}

Combining \autoref{THM:nice!} with \autoref{thm:canonicalsquareee}, we finally obtain the second main result in this paper, \autoref{MainTHM:C} (it should be noted that \autoref{thm:canonicalsquareee} does not exactly coincide with the assertion obtained by formally substituting $n=1$ into \autoref{MainTHM:C}). 
For terminology not defined, see \autoref{stanleyNICE}.

\begin{theorem}[{\autoref{thm:interesting???}}]\label{MainTHM:C}
Fix $2 \le n \in \ZZ$.
For \( 1 \leq i \leq n \),
let $\Delta_i$ be a connected simplicial complex and let $A_i:=\kk[\Delta_i]$.
Set $\Delta:=\bigsqcup_{i=1}^n \Delta_i$~(see \autoref{def:disjunionsimpcomp}) and
$R:=\kk[\Delta]$, and assume that $\Delta$ is not the discrete simplicial complex on two vertices.
Then the  following hold:
\begin{itemize}
\item[\rm (1)] Suppose that $R$ is Cohen--Macaulay on the punctured spectrum.
Then the following are equivalent:
\begin{itemize}
\item[\rm (a)] $\tr_R(\omega_R)$ is $\MM_R$-primary;
\item[\rm (b)] $\tr_{A_i}(\omega_{A_i}) \in \{A_i, \mm_{A_i}, \mm_{A_i}^2\}$ and \( \dim(\Delta_i) = \dim(\Delta) \) for any $i=1,\cdots,n$;
\item[\rm (c)] $\tr_R(\omega_R) \supseteq \MM_R^2$.
\end{itemize}
\item[\rm (2)] The following are equivalent:
\begin{itemize}
\item[\rm (a)] $\tr_R(\omega_R)=\MM_R$;
\item[\rm (b)] $\tr_{A_i}(\omega_{A_i}) \supseteq \MM_{A_i}$ and $\dim(\Delta_i)=\dim(\Delta)$ for
any $1 \le i \le n$;
\item[\rm (c)]
The following hold;
\begin{itemize}
\item[\rm (i)] $\dim(\Delta_i)=\dim(\Delta)$ for any $1 \le i \le n$,
\item[\rm (ii)] \( A_i \) is quasi-Gorenstein or \( \Delta_i \) is isomorphic to a path for any $1 \le i \le n$.
\end{itemize}
\end{itemize}
\item[\rm (3)] 
Suppose that $R$ is Cohen--Macaulay on the punctured spectrum.
Then the following are equivalent:
\begin{itemize}
\item[\rm (a)] $\tr_R(\omega_R)=\MM_R^2$;
\item[\rm (b)] $\tr_{A_i}(\omega_{A_i})=\MM_{A_i}^2$ and \( \dim(\Delta_i) = \dim(\Delta) \) for any \( 1 \leq i \leq n \);
\item[\rm (c)] $\Delta_i$ is a $\kk$-non-orientable $\kk$-homology manifold and \( \dim(\Delta_i) = \dim(\Delta) \) for any \( 1 \leq i \leq n \).
\end{itemize}
\end{itemize}
\end{theorem}

\subsection*{Outline}
In \autoref{sect2}, we review the fundamental concepts of trace ideals, canonical modules, fiber products, and Stanley--Reisner rings.
In \autoref{sec:trfib}, we discuss the canonical trace of the fiber product and prove \autoref{THM:nice!}.
By applying the result
to Stanley--Reisner rings, we also establish \autoref{MainTHM:B}, a formula for computing the canonical traces of Stanley--Reisner rings arising from disconnected simplicial complexes.
In \autoref{sec:teterStanley}, we focus on the case of Stanley--Reisner rings corresponding to disconnected simplicial complexes. We extend known results on rings called {\it Teter type} in the Cohen--Macaulay setting to the non-Cohen--Macaulay case~(see \autoref{Thm:nonCMTeter}), and we apply it to prove \autoref{thm:canonicalsquareee}. Finally, by combining \autoref{thm:canonicalsquareee} with \autoref{MainTHM:B}, we prove \autoref{MainTHM:C}, which generalizes \cite[Theorem~A]{miyashita2024canonical}.

\begin{setup}\label{setup1}
Throughout this paper,
we denote the set of non-negative integers by $\NN$. Let $R=\bigoplus_{i \ge 0} R_i$ be a positively graded Noetherian ring. Unless otherwise stated, we assume that $(R_0, \fkm_{R_0})$ is an Artinian local ring. Hence, $R$ has the unique graded maximal ideal given by ${\MM_R}:=\fkm_{R_0} R + \bigoplus_{i > 0} R_i$.
When there is no risk of confusion about \(R\), we simply write \(\mathfrak{m}_R\) as \(\mathfrak{m}\).
We denote the residue field by $\kk=R/\MM_R$. For graded $R$-modules $M$ and $N$, ${}^*\Hom_R(M, N)$ denotes the graded $R$-module consisting of graded homomorphisms from $M$ to $N$.
Let $\omega_R$ denote the graded canonical module (note that under these assumptions, $R$ admits a graded canonical module; see \autoref{defcanon}). Let $a_R$ denote the {\it $a$-invariant} of $R$, that is, 
\[
a_R := - \min \{j\in \mathbb{Z} : [\omega_R]_j \neq 0 \}.
\]
\end{setup}

\begin{remark}\label{rem1.5}
Under the assumption of \autoref{setup1}, there exists a graded polynomial ring $S$ such that $S_0$ is a regular local ring and there is a surjective graded homomorphism $S\to R$. Indeed, since an Artinian local ring $R_0$ is complete, by the structure theorem of complete local ring, there exists a regular local ring $S_0$ mapping onto $R_0$. Hence, we can find a graded polynomial ring $S$ over $S_0$ such that there is a surjective graded homomorphism $S\to R$. 
\end{remark}

\section{Preliminaries}
\label{sect2}
The purpose of this section is to lay the groundwork for the discussions of our main results. Throughout this section, unless otherwise stated, we maintain \autoref{setup1}.

\subsection{Trace ideals}

\begin{definition}
For a graded $R$-module $M$, the sum of all images of homomorphisms $\phi \in \Hom_R(M,R)$ is called the {\it trace} of $M$:
\[
\tr_R(M):=\sum_{\phi \in \Hom_R(M,R)}\phi(M).
\] 
\end{definition}

\begin{remark}\label{rem:interestingfiniteness}
Let $M$ be a (not necessarily finitely generated) graded $R$-module $M$. Then, we have 
\[
\tr_R(M)=\sum_{\phi \in {}^*\Hom_R(M,R)}\phi(M).
\] 
\end{remark}

\begin{proof}
To prove the inclusion $\subseteq$, we prove that $f(M)\subseteq \sum_{\phi \in {}^*\Hom_R(M,R)}\phi(M)$ for all $f\in \Hom_R(M,R)$. Since $M$ is graded, it is enough to prove that 
\[
f(x) \in \sum_{\phi \in {}^*\Hom_R(M,R)}\phi(M)
\]
for all $f\in \Hom_R(M,R)$ and all homogeneous elements $x\in M$. We write 
\[
f(x)=\sum_{i\in \mathbb{Z}, \, \text{finite sum}} y_i
\]
for $y_i\in R_i$. We then consider the composition $g_{n,n+i}: M_n\xrightarrow{\iota} M\xrightarrow{f} R \xrightarrow{\pi} R_{n+i}\xrightarrow{\iota} R$ for integers $i$ and $n$, where $\iota$ are the inclusions and $\pi$ is the surjection. Set a homogeneous element 
\[
g_{i}:=(g_{n,n+i})_{n\in \mathbb{Z}}\in {}^*\Hom_R(M,R)
\]
of degree $i$. By the definition of $g_{i}$, we have $y_i=g_{i-\deg x}(x)$. Hence, $f(x)$ can be represented by the finite sum of $g_{i}$:
\[
f(x) = \sum_{i\in \mathbb{Z}, \, \text{finite sum}} y_i = \sum_{i\in \mathbb{Z}, \, \text{finite sum}} g_{i-\deg x}(x).
\]
It follows that $f(x)\in \sum_{\phi \in {}^*\Hom_R(M,R)}\phi(M)$. 
The converse inclusion $\supseteq$ is clear.
\end{proof}

The following \autoref{lem:thanks} and \autoref{tracelem}, which concern trace ideals, will be used in Section~\ref{sec:trfib}.
\autoref{lem:thanks}~(3) is originally stated for the local case (see, for example,
    \cite[Lemma 3.1]{kumashiro2023trace}), but the proof can be adapted to the graded case with only minor modifications. For the reader's convenience, we include a proof here.

\begin{lemma}\label{lem:thanks}
Let $M$ be a non-zero $R$-module.
Then the following hold:
\begin{itemize}
\item[\rm (1)] Let $N$ be a graded $R$-module such that $\MM_R \subseteq \ann_R(N)$.
Then there exists a set $\Lambda$ and an epimorphism $f: M^{\bigoplus \Lambda} \twoheadrightarrow N$;
\item[\rm (2)] Let $I$ be a graded ideal of $R$ such that 
$\MM_R \subseteq \ann_R(I)$.
Then we have $\tr_R(M) \supseteq I$;
\item[\rm (3)] We have $\tr_R(M) \supseteq (0):_R \mm_R$.
\end{itemize}
\begin{proof}
(1):
Since $N$ is a \( \kk \)-vector space, there exists an index set \( \Lambda \) such that \( N \cong \kk^{\bigoplus \Lambda} \).  
On the other hand, noting that \( M / \mm_R M \) is a non-zero \( \kk \)-vector space by Nakayama's Lemma, we have an \( R \)-epimorphism
$M \twoheadrightarrow M / \mm_R M \twoheadrightarrow \kk$.  
It follows that there exists an \( R \)-epimorphism $M^{\bigoplus \Lambda} \twoheadrightarrow N$.

(2): Apply (1) by taking \( N = I \).  
Noting \cite[Proposition~2.8~(i)]{lindo2017trace}, we obtain  
$\tr_R(M) \supseteq \tr_R(I) \supseteq I$,  
as desired.

(3): This follows from (2).
\end{proof}
\end{lemma}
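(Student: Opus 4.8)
The plan is to prove the three assertions in order, deducing (2) and (3) from (1).

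For (1), I would first observe that $\MM_R \subseteq \ann_R(N)$ makes $N$ a module over $R/\MM_R = \kk$; since every vector space has a basis, this gives an isomorphism $N \cong \kk^{\bigoplus\Lambda}$ for some index set $\Lambda$. Next I would produce a single $R$-epimorphism $M \twoheadrightarrow \kk$: as $M \neq 0$ is a finitely generated graded module, graded Nakayama gives $\MM_R M \neq M$, so $M/\MM_R M$ is a non-zero $\kk$-vector space, and composing the projection $M \twoheadrightarrow M/\MM_R M$ with any $\kk$-linear (hence $R$-linear) surjection onto $\kk$ yields the map. Summing $\Lambda$ copies of this epimorphism and post-composing with $\kk^{\bigoplus\Lambda}\xrightarrow{\sim} N$ gives the required $f\colon M^{\bigoplus\Lambda}\twoheadrightarrow N$.

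For (2), I would apply (1) to $N = I$ (permissible since $\MM_R\subseteq\ann_R(I)$), getting an epimorphism $g\colon M^{\bigoplus\Lambda}\twoheadrightarrow I$, and compose it with the inclusion $I\hookrightarrow R$ to obtain $h\colon M^{\bigoplus\Lambda}\to R$ with $\Im(h)=I$. Each restriction $h_\lambda\colon M\to R$ of $h$ to the $\lambda$-th summand belongs to $\Hom_R(M,R)$, and since the summands generate $M^{\bigoplus\Lambda}$ we get $I=\Im(h)=\sum_{\lambda\in\Lambda}\Im(h_\lambda)\subseteq\tr_R(M)$. Alternatively one may invoke \cite[Proposition~2.8~(i)]{lindo2017trace}: the surjection forces $\tr_R(I)\subseteq\tr_R(M^{\bigoplus\Lambda})=\tr_R(M)$, and $I\subseteq\tr_R(I)$ since the inclusion $I\hookrightarrow R$ has image $I$. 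For (3), I would take $I:=(0):_R\mm_R$; it is a graded ideal with $\mm_R I=0$, i.e.\ $\MM_R\subseteq\ann_R(I)$, so part (2) applies directly and yields $\tr_R(M)\supseteq (0):_R\mm_R$.

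The only step where the hypotheses really matter is the construction of $M\twoheadrightarrow\kk$ in (1): it requires $M/\MM_R M\neq 0$, which for a non-zero \emph{finitely generated graded} module is precisely graded Nakayama's lemma (and this is the situation in all applications of the lemma). Once (1) is in hand, (2) and (3) are purely formal.
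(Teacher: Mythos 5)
Your proposal is correct and follows essentially the same route as the paper: identify $N$ with $\kk^{\bigoplus\Lambda}$, build $M\twoheadrightarrow M/\MM_R M\twoheadrightarrow\kk$ via Nakayama, and deduce (2) from (1) together with the surjection onto $I$ (the paper cites \cite[Proposition~2.8~(i)]{lindo2017trace}, while you also spell out the equivalent hands-on argument), then (3) as the special case $I=(0):_R\MM_R$. Your closing remark that the Nakayama step implicitly uses that $M$ is (finitely generated) graded is a fair observation about the statement's hypotheses, but it does not change the argument.
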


\begin{lemma}\label{tracelem}
Let $M$ be a finitely generated graded $R$-module. Suppose that $\tr_R(M) \subseteq \fkm_R$ (equivalently, $M$ has no free summand). Let $N$ be an $R$-submodule of $M$ such that $N \subseteq ((0):_R \fkm_R)M$. Then we have $\tr_R(M) = \tr_R(M/N)$.
\end{lemma}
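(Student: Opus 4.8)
The plan is to prove the two inclusions $\tr_R(M/N)\subseteq\tr_R(M)$ and $\tr_R(M)\subseteq\tr_R(M/N)$ separately. The first is purely formal: writing $\pi\colon M\twoheadrightarrow M/N$ for the canonical surjection, every $\psi\in\Hom_R(M/N,R)$ yields a composite $\psi\circ\pi\in\Hom_R(M,R)$ with $(\psi\circ\pi)(M)=\psi(M/N)$, so $\psi(M/N)\subseteq\tr_R(M)$; summing over all such $\psi$ gives $\tr_R(M/N)\subseteq\tr_R(M)$.

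For the reverse inclusion, the key step I would establish is that \emph{every} homomorphism $\phi\in\Hom_R(M,R)$ annihilates $N$. Since $\tr_R(M)\subseteq\fkm_R$ by hypothesis, we have $\phi(M)\subseteq\fkm_R$. On the other hand $N\subseteq((0):_R\fkm_R)M$, so any $n\in N$ can be written as a finite sum $n=\sum_i c_i m_i$ with $c_i\in(0):_R\fkm_R$ and $m_i\in M$; then $\phi(n)=\sum_i c_i\phi(m_i)\in((0):_R\fkm_R)\,\fkm_R=0$, the last equality being the definition of the colon ideal. Hence $\phi(N)=0$, so $\phi$ factors as $\phi=\overline{\phi}\circ\pi$ for a (unique) $\overline{\phi}\in\Hom_R(M/N,R)$, and therefore $\phi(M)=\overline{\phi}(M/N)\subseteq\tr_R(M/N)$. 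Summing over all $\phi$ yields $\tr_R(M)\subseteq\tr_R(M/N)$, and combining the two inclusions completes the proof.

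I do not expect a genuine obstacle here: the argument is short, and the only delicate point is the elementary identity $((0):_R\fkm_R)\,\fkm_R=0$ together with the observation that it suffices to test $\phi$ on elements of $N$ of the form $cm$ with $c$ annihilating $\fkm_R$, both of which are immediate. If one prefers, one may first reduce to graded homomorphisms via \autoref{rem:interestingfiniteness}, but this is not needed. Note also that no case distinction according to whether $M/N$ is zero is required: if $M/N=0$ then every $\phi$ is zero and both traces vanish.
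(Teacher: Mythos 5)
Your proposal is correct and follows essentially the same route as the paper: the easy inclusion via composition with the surjection $M\twoheadrightarrow M/N$, and the reverse inclusion by showing every $\phi\in\Hom_R(M,R)$ kills $N$ because $\phi(N)\subseteq((0):_R\fkm_R)\,\phi(M)\subseteq((0):_R\fkm_R)\,\fkm_R=0$, so that $\phi$ factors through $M/N$. Nothing further is needed.
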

\begin{proof}
Let $f\in \Hom_R(M, R)$. Since $f(M)\subseteq \tr_R(M) \subseteq\fkm_R$, we get 
\[
f(N)\subseteq f(((0):_R \fkm_R) M)= ((0):_R \fkm_R)f(M)\subseteq ((0):_R \fkm_R)\fkm_R =0.
\]
Hence, $f$ induces the canonical map $\phi_f: M/N \to R$, and $\phi_f(M/N)=f(M)$. Thus, $\tr_R(M) \subseteq\tr_R(M/N)$. The converse inclusion follows from the fact that there is a surjection $M\to M/N$ (see, for example, \cite[Proposition 2.8~(i)]{lindo2017trace}).
\end{proof}

\subsection{Canonical modules over Noetherian graded rings}

Let us recall the definition of the canonical module over a positively graded Noetherian ring.
We maintain \autoref{setup1}.

\begin{definition}{\cite[Definition (2.1.2)]{goto1978graded}}\label{defcanon}
Set $d=\dim (R)$. The finitely generated graded $R$-module
\[
\omega_R:={}^*\Hom_R(\mathrm{H}_{\MM_R}^{d}(R), E_R)
\]
is called the {\it canonical module}, where $\mathrm{H}_{\MM_R}^{d}(R)$ denotes the $d$-th local cohomology and $E_R$ denotes the injective envelope of $R/\mm_R$.
\end{definition}

\begin{definition}
\( R \) is said to be {\it quasi-Gorenstein} if \( \omega_R \) is isomorphic to \( R \) as a graded \( R \)-module.  
\end{definition}

\begin{lemma}{\cite[Proposition (2.1.6)]{goto1978graded}}\label{gotowatanabe}
Let $S$ be a positively graded Gorenstein local ring such that there exists a graded surjective ring homomorphism $S\to R$. Then 
\[
\omega_R\cong ^{*}\Ext_S^t(R, S),
\]
where $t=\dim(S)-\dim(R)$ which is the least integer $i$ such that $^{*}\Ext_S^i(R, S)\ne 0$.
\end{lemma}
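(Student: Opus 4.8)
The plan is to deduce the formula from graded local duality over the Gorenstein ring $S$, after transporting everything from $R$ back to $S$ along the surjection. Write $\pi\colon S\twoheadrightarrow R$, $I=\ker\pi$, $n=\dim S$ and $d=\dim R$, so that $t=n-d$ and (up to radical) $\MM_R=\MM_S R$; for a graded $S$- or $R$-module $M$ let $M^{\vee}$ denote its graded Matlis dual, formed with respect to the graded injective hull $E_S$ (resp.\ $E_R$) of $\kk$ over the relevant ring.

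First I would record two compatibility statements. (i) \emph{Independence of the base ring for local cohomology}: since $\MM_R$ and $\MM_S$ have the same radical in $R$, one has $\mathrm{H}^i_{\MM_S}(M)\cong\mathrm{H}^i_{\MM_R}(M)$ for every graded $R$-module $M$ and every $i$; in particular $\mathrm{H}^{d}_{\MM_R}(R)\cong\mathrm{H}^{d}_{\MM_S}(R)$. (ii) \emph{Compatibility of graded Matlis duality}: the graded injective hull of $\kk$ over $R$ is $(0:_{E_S}I)\cong{}^*\Hom_S(R,E_S)$, and the Hom--tensor adjunction gives a natural isomorphism ${}^*\Hom_R(M,E_R)\cong{}^*\Hom_S(M,E_S)$ for every graded $R$-module $M$; that is, $(-)^{\vee}$ may be computed over $S$. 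Feeding (i) and (ii) into \autoref{defcanon} yields
\[
\omega_R={}^*\Hom_R(\mathrm{H}^{d}_{\MM_R}(R),E_R)\cong{}^*\Hom_S(\mathrm{H}^{d}_{\MM_S}(R),E_S)=\mathrm{H}^{d}_{\MM_S}(R)^{\vee}.
\]

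Next I would invoke graded local duality over $S$ (\cite{goto1978graded}): since $S$ is a positively graded Noetherian ring with $S_0$ Artinian local and $S$ is Gorenstein, for every finitely generated graded $S$-module $M$ there is an isomorphism of graded $S$-modules $\mathrm{H}^{i}_{\MM_S}(M)^{\vee}\cong{}^*\Ext^{n-i}_S(M,\omega_S)$, and moreover $\omega_S$ is a free graded $S$-module of rank one, hence isomorphic to $S$ up to a degree shift which we absorb to match the normalization of \autoref{defcanon}. Applying this with $M=R$ and $i=d$ gives $\omega_R\cong{}^*\Ext^{n-d}_S(R,\omega_S)\cong{}^*\Ext^{t}_S(R,S)$; the local duality isomorphism is $S$-linear, and since both sides are annihilated by $I$ it is an isomorphism of graded $R$-modules, which is the asserted formula. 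For the minimality of $t$, I would run graded local duality in the other direction: ${}^*\Ext^{i}_S(R,S)^{\vee}\cong\mathrm{H}^{n-i}_{\MM_S}(R)\cong\mathrm{H}^{n-i}_{\MM_R}(R)$ by (i). By Grothendieck vanishing this is $0$ whenever $n-i>d$, i.e.\ whenever $i<t$; by Grothendieck non-vanishing $\mathrm{H}^{d}_{\MM_R}(R)\ne0$ because $\dim R=d$, so ${}^*\Ext^{t}_S(R,S)\ne0$. Since $(-)^{\vee}$ is faithful on the finitely generated graded modules at hand, ${}^*\Ext^{i}_S(R,S)=0$ for $i<t$ and ${}^*\Ext^{t}_S(R,S)\ne0$, which is the remaining claim.

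The main obstacle is genuinely just setting up graded local duality in the stated generality — $S_0$ Artinian local rather than a field — and tracking the degree shift coming from $\omega_S\cong S(a_S)$; the underlying mechanism (the \v{C}ech complex computing $\mathrm{H}^{\bullet}_{\MM_S}$, self-duality of the Koszul complex on a homogeneous system of parameters, and $\omega_S$ free of rank one) is precisely the content of \cite[\S(2.1)]{goto1978graded}, so in the write-up I would cite it rather than reprove it. Everything else — the two compatibility statements in (i) and (ii) and the Grothendieck (non)vanishing theorems — is standard and requires only routine verification in the graded setting.
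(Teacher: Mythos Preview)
The paper does not supply its own proof of this lemma; it is simply quoted from \cite[Proposition~(2.1.6)]{goto1978graded}. Your argument via graded local duality over the Gorenstein base $S$, together with the base-change compatibilities (i) and (ii) and Grothendieck (non)vanishing for the minimality of $t$, is correct and is exactly the standard proof of this result.

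One small remark on the degree shift you flag: since $\omega_S\cong S(a_S)$ as graded $S$-modules, graded local duality literally yields $\omega_R\cong{}^*\Ext^t_S(R,\omega_S)\cong{}^*\Ext^t_S(R,S)(a_S)$, so the lemma as stated in the paper is implicitly either an ungraded isomorphism or an isomorphism up to shift. You handle this correctly by ``absorbing'' the shift, and indeed none of the paper's applications of the lemma (e.g.\ in the proofs of \autoref{rem:gradedaoyamagoto} and \autoref{lem:important1}) are sensitive to it.
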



\begin{definition}
{Let \( M \) be a graded \( R \)-module. Let \( S \subseteq R \) be a multiplicatively closed subset of \( R \), that is,
\( 1_R \in S \) and
\( ab \in S \) for any \( a, b \in S \).  
We denote by \( (S) \) the set of all homogeneous elements of \( S \).
We denote \( M_{(S)} \) by the localization of \( M \) at \( (S) \).
For any \( \mathfrak{p} \in {}^{*}\Spec(R) \), we define \( R_{(\mathfrak{p})} \) as \( R_{(R \setminus \mathfrak{p})} \).}
\end{definition}

\begin{remark}\label{rem:gradedaoyamagoto}
The following hold:
\begin{enumerate}[\rm(1)]
\item
We have $(\omega_R)_\pp \cong \omega_{R_\pp}$ and $(\omega_R)_{(\pp)} \cong \omega_{R_{(\pp)}}$ for any $\pp \in {}^*\Supp(\omega_R)$.
\item $\Ass_R(\omega_R)=\{\pp \in {}^*\Spec(R) : \dim(R/\pp)=\dim (R) \}$.
Consequently, we have
${}^*\Supp(\omega_R)= {}^*\Spec(R)$ if and only if $\Min(R)=\{\pp \in {}^*\Spec(R) : \dim(R/\pp)=\dim (R) \}$,
that is,
$R$ is {\it equidimensional}.
\item
Let
$(0) = \bigcap_{\mathfrak{p} \in \mathrm{Ass}(R)} Q(\mathfrak{p})$
be an irredundant primary decomposition of the zero ideal \((0)\), where each \(Q(\mathfrak{p})\) is a primary ideal with \(\sqrt{Q(\mathfrak{p})} = \mathfrak{p}\). Then we have
\[
\operatorname{ann}_R(\omega_R) = \bigcap_{\substack{\mathfrak{p} \in \mathrm{Ass}(R) \\ \dim(R) = \dim(R/\mathfrak{p})}} Q(\mathfrak{p}).
\]
\item $\tr_R(\omega_R)=R$ if and only if $R$ is quasi-Gorenstein.
\item Let $\pp \in {}^*\Supp(\omega_R)$. Then $\pp \supseteq \tr_R(\omega_R)$ if and only if $R_\pp$ is not quasi-Gorenstein.
\item If $\sqrt{\tr_R(\omega_R)} \supseteq \mm_R$,
then $R$ is equidimensional.
Consequently, \(R\) is quasi-Gorenstein on the punctured spectrum.
\end{enumerate}
\end{remark}
\begin{proof}
(1): Let $\phi:S\to R$ be a surjective graded ring homomorphism, where $S$ is a positively graded Gorenstein ring~(see \autoref{rem1.5}). Set $t=\dim(S)-\dim(R)$. By \cite[1.2.10 (e)]{bruns1998cohen}, we can choose a graded $S$-regular sequence $\underline{x}=x_1, \dots, x_t \in \Ker(\phi)$. 
By \cite[Lemma 3.1.16]{bruns1998cohen}, passing to $S\to S/(\underline{x})$, we may assume that $t=0$. 

Let $\pp \in {}^*\Supp(\omega_R)$. Then, we have $(\omega_R)_\fkp\cong \Hom_S(R, S)_\fkp$ by \autoref{gotowatanabe}. On the other hand, we have $\omega_{R_\fkp} \cong \Hom_{S_\fkq}(R_\fkp, S_\fkq)$, where $\fkq=\fkp\cap S$, since $S_\fkq \to R_\fkp$ is surjective.
Therefore, we have $(\omega_R)_\pp
\cong
(\Hom_S(R,S))_\pp
\cong
\Hom_{S_\fkq}(R_\pp,S_{\fkq})
\cong
\omega_{R_\fkp}$.
Similarly, since
\autoref{gotowatanabe} and \( S_{(\fkq)} \to R_{(\fkp)} \)  
is surjective, we have  
\((\omega_R)_{(\pp)} \cong \Hom_S(R, S)_{(\pp)}\),  
and  
\(\omega_{R_{(\fkp)}} \cong \Hom_{S_{(\fkq)}}(R_{(\fkp)}, S_{(\fkq)})\),
so it follows that  
\((\omega_R)_{(\pp)} \cong \omega_{R_{(\pp)}}\).

(2): 
Let $\fkp\in {}^*\Spec(R)$. Since all associated primes are graded,  $\fkp\in \Ass(\omega_R)$ if and only if $\fkp R_\fkm\in \Ass(\omega_{R_\fkm})$ (see \cite[Theorem 6.2]{matsumura1989commutative}). On the other hand, we obtain that 
\begin{align*}
\Ass(\omega_{R})_\fkm = \Ass(\omega_{R_\fkm}) =& \{\fkq \in \Spec(R_\fkm) : \dim(R_\fkm/\fkq)=\dim (R_\fkm) \}\\
=& \{\pp R_\fkm \in \Spec(R_\fkm) : \pp \in {}^*\Spec(R) \text{ and } \dim(R_\fkm/\pp R_\fkm)=\dim (R_\fkm) \} \\
=& \{\pp R_\fkm \in \Spec(R_\fkm) : \pp \in {}^*\Spec(R) \text{ and } \dim(R/\fkp)=\dim (R) \}, 
\end{align*}
where the first equation follows by (1), the second equation follows by \cite[(1.7)]{aoyama1983some}, the third and the fourth equations follow by \cite[Theorem 1.5.8 (a) and (b)]{bruns1998cohen}. Therefore, we have $\fkp\in \Ass(\omega_R)$ if and only if
$\fkp \in \{\fkq \in {}^*\Spec(R) : \dim(R/\fkq)=\dim (R) \}$, as desired.

(3):
By \cite[Theorem~6.2]{matsumura1989commutative}, we have
\[
\{ \mathfrak{q} \in \mathrm{Ass}(R_{\mathfrak{m}}) : \dim(R_{\mathfrak{m}}/\mathfrak{q}) = \dim(R_{\mathfrak{m}}) \} = \{ \mathfrak{p} R_{\mathfrak{m}} : \mathfrak{p} \in \mathrm{Ass}(R),\ \dim(R/\mathfrak{p}) = \dim(R) \}.
\]
Thus we obtain an irredundant primary decomposition of \((0)\) in \(R_{\mathfrak{m}}\) as \((0) = \bigcap_{\mathfrak{p} \in \mathrm{Ass}(R)} Q(\mathfrak{p} R_{\mathfrak{m}})\). By \cite[(1.8)]{aoyama1983some}, it follows that
\[
\operatorname{ann}_{R_{\mathfrak{m}}}(\omega_{R_{\mathfrak{m}}}) = \bigcap_{\substack{\mathfrak{p} \in \mathrm{Ass}(R) \\ \dim(R) = \dim(R/\mathfrak{p})}} Q(\mathfrak{p} R_{\mathfrak{m}}).
\]
Note that \(\operatorname{ann}_{R_{\mathfrak{m}}}(\omega_{R_{\mathfrak{m}}})=\operatorname{ann}_R(\omega_R) R_{\mathfrak{m}}\) by (1) and the compatibility of annihilators with localization. Therefore, we have
\[
\operatorname{ann}_R(\omega_R) R_{\mathfrak{m}} = \operatorname{ann}_{R_{\mathfrak{m}}}(\omega_{R_{\mathfrak{m}}}) = \bigcap_{\substack{\mathfrak{p} \in \mathrm{Ass}(R) \\ \dim(R) = \dim(R/\mathfrak{p})}} Q(\mathfrak{p} R_{\mathfrak{m}}) = \left( \bigcap_{\substack{\mathfrak{p} \in \mathrm{Ass}(R) \\ \dim(R) = \dim(R/\mathfrak{p})}} Q(\mathfrak{p}) \right) R_{\mathfrak{m}},
\]
and hence
\[
\operatorname{ann}_R(\omega_R) = \left( \operatorname{ann}_R(\omega_R) R_{\mathfrak{m}} \right) \cap R = \left( \bigcap_{\substack{\mathfrak{p} \in \mathrm{Ass}(R) \\ \dim(R) = \dim(R/\mathfrak{p})}} Q(\mathfrak{p}) \right) R_{\mathfrak{m}} \cap R = \bigcap_{\substack{\mathfrak{p} \in \mathrm{Ass}(R) \\ \dim(R) = \dim(R/\mathfrak{p})}} Q(\mathfrak{p}).
\]

(4): Suppose that $\tr_R(\omega_R)=R$. Then, $\omega_R \cong R\oplus X$ as graded modules for some graded $R$-module $X$. On the other hand, by \cite[Proposition 2.8~(viii)]{lindo2017trace}, $\tr_{R_\fkm}(\omega_{R_\fkm})=\tr_R(\omega_R)R_\fkm=R_\fkm$. It follows that $R_\fkm$ is quasi-Gorenstein by \cite[Proposition 3.3]{aoyama1985endomorphism}, that is, $\omega_{R_\fkm}\cong R_\fkm$. Therefore, we observe that $X_\fkm=0$. Since $X$ is graded, this shows that $X=0$. Thus, $\omega_R\cong R$, that is, $R$ is quasi-Gorenstein. The converse is clear.

(5): 
Notice that $\tr_{R_p}(\omega_{R_\pp})=\tr_R(\omega_R)R_\pp$ by \cite[Proposition 2.8~(viii)]{lindo2017trace} and (1).
Then we observe that
$$\pp \supseteq \tr_R(\omega_R)
\iff
\tr_R(\omega_R)R_\pp \neq R_\pp
\iff
\tr_{R_p}(\omega_{R_\pp}) \neq R_\pp.$$
Moreover, $\tr_{R_p}(\omega_{R_\pp}) \neq R_\pp$ is equivalent to that $R_\pp$ is not quasi-Gorenstein by \cite[Proposition 3.3]{aoyama1985endomorphism}.

(6):
By (2), it is enough to show that ${}^*\operatorname{Supp}(\omega_R)={}^*\operatorname{Spec}(R)$.
Assume that \(
\sqrt{\operatorname{tr}_R(\omega_R)}
\supseteq
\mathfrak m_R
\). Let \(\mathfrak p\in {}^*\operatorname{Spec} R\). The case \(\mathfrak p=\mathfrak m_R\) is clear, so we may assume that \(\mathfrak p\neq\mathfrak m_R\).
Then,
we have
\(\tr_{R_{\mathfrak p}}((\omega_R)_{\mathfrak p})=\tr_R(\omega_R)_{\mathfrak p}=R_{\mathfrak p}\)
by \cite[Proposition 2.8~(viii)]{lindo2017trace}. In particular,
we obtain
\((\omega_R)_{\mathfrak p}\neq 0\). Hence \(\mathfrak p\in{}^*\Supp(\omega_R)\), and therefore \(R\) is equidimensional. The assertion now follows from \rm(5), which shows that \(R\) is quasi-Gorenstein on the punctured spectrum.
\end{proof}

\begin{definition}
$R$ is said to be {\it generically Gorenstein}
if $R_\pp$ is Gorenstein for any $\pp \in \Ass(R)$.
\end{definition}

\begin{definition}
Let \( S \) be the set of all non-zero divisors of \( R \) and define \( {}^*Q(R) := R_{(S)} \).
\end{definition}

\begin{remark}\label{rem:genGoriso}
If $R$ is generically Gorenstein, 
then ${}^*Q(R)$ is Gorenstein.
\begin{proof}
It follows from \cite[Exercises 3.6.20~(b)]{bruns1998cohen}.
\end{proof}
\end{remark}

\begin{remark}\label{rem:Canon.NZD}
Suppose that $R$ is generically Gorenstein.
Then the following hold:
\begin{itemize}
\item[\rm (1)] $\omega_R$ is isomorphic to a graded ideal of $R$.
\item[\rm (2)]
Let $I_R$ denote a graded ideal which is isomorphic to $\omega_R$.
Then the following are equivalent:
\begin{itemize}
\item[\rm (a)] $R$ is {\it unmixed}, that is, $\dim(R)=\dim(R/\pp)$ for any $\pp \in \Ass(R)$;
\item[\rm (b)] $\ann_R(\omega_R)=(0)$;
\item[\rm (c)] $\grade(I_R)>0$.
\end{itemize}
\end{itemize}
\end{remark}
\begin{proof}
(1):
Since \( R \) is generically Gorenstein,
there exists a graded isomorphism $\psi: {}^*Q(R) \otimes_R \omega_R \rightarrow {}^*Q(R)$ by \autoref{rem:genGoriso}.
Consider the graded \( R \)-homomorphism
\[
\phi:
\;
\omega_R \rightarrow {}^*Q(R) \otimes_R \omega_R,
\quad x \mapsto 1 \otimes x,
\]
which is injective because \( \Ass(\omega_R) \subseteq \Ass(R) \) by \autoref{rem:gradedaoyamagoto}~(2).  
Then we have \( \omega_R \cong \operatorname{Im}(\psi \phi) \subseteq {}^*Q(R) \).  
Let \( f_1, \dots, f_r \) be graded minimal generators of \( \operatorname{Im}(\phi) \) as a graded \( R \)-module.  
For each \( 1 \le i \le r \), we can write  
$f_i =
\frac{a_{i}}{s_{i}}
$,
where \( a_{i} \in R \) and each \( s_{i} \in R \) is a homogeneous non-zero divisor.  
Thus we obtain a graded embedding  
$\omega_R \cong (\prod_{i=1}^n s_i) \Im(\phi) \hookrightarrow R$.

(2):
(a) $\Leftrightarrow$ (b) follows from \autoref{rem:gradedaoyamagoto}~(2) and (3).
We now prove (b) $\Leftrightarrow$ (c).
If (b) holds, then since \( \ann_R(\omega_R) = \ann_R(I_R) \cong {}^*\Hom_R(R/I_R, R) = 0 \) and 
$\grade(I_R) = \min \{ i : {}^*\Ext_R^i(R/I_R, R) \neq 0 \} > 0$,  
it follows that (c) holds.
If (c) holds, then \( I_R \) contains a non-zero divisor of \( R \), which implies (b).
\end{proof}

\begin{remark}\label{rem:QGPolyExtension}
The following are equivalent:
\begin{itemize}
\item[\rm (1)] $R$ is quasi-Gorenstein;
\item[\rm (2)] $R[x]$ is quasi-Gorenstein;
\item[\rm (3)] $R[x, x^{-1}]$ is quasi-Gorenstein.
\end{itemize}
\end{remark}
\begin{proof}
Since $R\to R[x]$ and $R\to R[x, x^{-1}]$ are flat ring homomorphisms, we get 
\[
R[x]\otimes_R\tr_R(\omega_R) =\tr_{R[x]}(\omega_{R[x]}) \quad \text{and} \quad R[x, x^{-1}]\otimes_R\tr_R(\omega_R) =\tr_{R[x, x^{-1}]}(\omega_{R[x, x^{-1}]})
\]
(see, for example, \cite[Proposition 2.8~(viii)]{lindo2017trace}). Hence, the assertion follows by \autoref{rem:gradedaoyamagoto}~(4).
\end{proof}

\subsection{Fiber products}\label{subsection23}
Let $A$, $B$, and $T$ be positively graded Noetherian rings. Let $f:A \to T$ and $g: B \to T$ be graded homomorphisms of graded rings. The subring of $A\times B$ 
$$
R := A \times_T B = \{(a, b) \in A \times B : f(a) = g(b)\}
$$
is called {\it the fiber product of $A$ and $B$ over $T$ with respect to $f$ and $g$}. Then, one can endow $R$ with a natural graded structure by $R_n:=\{(a,b)\in A_n\times B_n : f(a)=g(b)\}$. 
By definition of the fiber product, we get a graded exact sequence
\begin{align}\label{eq0}
0 \longrightarrow R \overset{\iota}{\longrightarrow} A \oplus B \overset{\phi}{\longrightarrow} T
\end{align}
of $R$-modules, where $\phi = \left(\begin{smallmatrix}
f \\
-g
\end{smallmatrix}\right)$.
The map $\phi$ is surjective if $f$ or $g$ is surjective. 

In what follows, we suppose the following.
\begin{setup}\label{conditiona}
 \( T = A_0 = B_0 \) and the maps \( f \) and \( g \) are the canonical graded surjections 
\begin{center}
$f:A\to A/A_{>0}\cong T$ \quad and \quad $g: B\to B/B_{>0}\cong T$.
\end{center}
\end{setup}

In later chapters, we will mainly work under the further assumption that $T$ is a field, however, we state the propositions here in as general a form as possible.

\begin{remark}\label{rem:12}
We have
$\dim (R)=\max\{\dim(A), \dim (B)\}$~(see \cite[Lemma~1.5]{endo2021almost}).
\end{remark}

\begin{lemma}\label{lem:27}
Suppose that $(T, \fkm_T)$ is a Noetherian local ring. Then, $R$ has the unique graded maximal ideal 
\[
\fkm_R=\{(a,a) : a\in \fkm_T\} \oplus (A_{>0}\times B_{>0}).
\]
\end{lemma}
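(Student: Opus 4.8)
The plan is to first make the $\NN$-grading of $R$ completely explicit, and then invoke the standard observation that a positively graded ring whose degree-zero piece is local automatically has a unique graded maximal ideal.

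First I would unwind \autoref{conditiona}. Since $T$ is concentrated in degree $0$ and $f,g$ are graded, we have $f(A_n)=g(B_n)=0$ for every $n>0$; hence the defining condition $f(a)=g(b)$ is automatically satisfied in positive degrees, so $R_n=A_n\times B_n$ for all $n>0$ and therefore $\bigoplus_{n>0}R_n=A_{>0}\times B_{>0}$. In degree $0$, the restrictions $f|_{A_0}\colon A_0\to T$ and $g|_{B_0}\colon B_0\to T$ are the identity of $T=A_0=B_0$, so $R_0=\{(a,b)\in A_0\times B_0 : a=b\}=\{(a,a):a\in T\}$, a Noetherian local ring isomorphic to $T$ with maximal ideal $\fkm_{R_0}=\{(a,a):a\in\fkm_T\}$. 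Combining these two computations, the ideal in the statement is precisely $\fkm:=\fkm_{R_0}\oplus\bigoplus_{n>0}R_n$.

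Next I would verify that $\fkm$ is a maximal graded ideal: it is visibly a graded ideal, and $R/\fkm\cong R_0/\fkm_{R_0}\cong T/\fkm_T$ is a field. For uniqueness I would show that $\fkm$ contains every proper graded ideal $I$ of $R$. Indeed, a homogeneous element of $I$ of positive degree lies in $\bigoplus_{n>0}R_n\subseteq\fkm$, while a homogeneous element of $I$ of degree $0$ lying outside $\fkm_{R_0}$ would be a unit of the local ring $R_0$, hence a unit of $R$, forcing $I=R$ — a contradiction. Thus $I\subseteq\fkm$, so $\fkm$ is the unique graded maximal ideal of $R$, as claimed.

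I do not expect a genuine obstacle here; this is essentially a bookkeeping lemma. The only step that requires a little care is the first one — checking that $f$ and $g$ restrict to the identity in degree $0$ and annihilate all positive degrees — since this is exactly what identifies $R_0$ with $T$ and $\bigoplus_{n>0}R_n$ with $A_{>0}\times B_{>0}$. (Alternatively, the positive-degree part can be read off directly from the exact sequence \eqref{eq0}, whose degree-$n$ strand for $n>0$ is $0\to R_n\to A_n\oplus B_n\to 0$.)
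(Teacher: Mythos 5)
Your proposal is correct and follows essentially the same route as the paper: identify $R_{>0}=A_{>0}\times B_{>0}$ and $R_0\cong T$, observe that the quotient by the candidate ideal is the field $T/\fkm_T$, and rule out any other graded maximal ideal by noting that a homogeneous element outside $\fkm_R$ must have degree $0$ and is therefore a unit. The only cosmetic difference is that you phrase uniqueness as ``every proper graded ideal is contained in $\fkm_R$'' while the paper argues by contradiction with a second graded maximal ideal; these are the same argument.
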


\begin{proof}
Since $A_{>0} \times B_{>0} \subseteq R \subseteq A\times B$, we have $R_{>0}=A_{>0} \times B_{>0}$. We also have $\{(a,a) : a\in \fkm_T\} \subseteq R_0$ by the definitons of $f$ and $g$. Hence, $\fkm_R=\{(a,a) : a\in \fkm_T\} \oplus (A_{>0}\times B_{>0})$ is a graded ideal of $R$. Since $R/\fkm_R\cong T/\fkm_T$, $\fkm_R$ is a graded maximal ideal of $R$. Assume that there exists another graded maximal ideal $\fkn_R$, and choose $\alpha\in \fkn_R\setminus \fkm_R$. Then, $\alpha=(a, b)$ is of degree $0$; hence, $a=b\in T$. Since $\alpha\not\in \fkm_R$,
we get $a\not\in \fkm_T$.
This implies that $a$ is a unit of $T$. This shows that $\alpha=(a,a)$ is also a unit of $R$, which is a contradiction since $\alpha \in \fkn_R$.
\end{proof}

\begin{proposition}\label{lem:important1}
Suppose that $(T, \fkm_T)$ is an Artinian local ring. 
Set 
\[
\omega_{A, B}:=\begin{cases}
    \omega_{A} \oplus \omega_{B} & \text{if } \dim(A)=\dim(B) \\
    \omega_{A} & \text{if } \dim(A)>\dim(B) \\    
    \omega_{B} & \text{if } \dim(A)<\dim(B).
\end{cases}
\]
Then, the following hold. 
\begin{enumerate}[\rm(1)] 
\item If $\dim (R)=0$, then there exists a graded exact sequence 
\[
0\rightarrow \omega_T \rightarrow
\omega_{A} \oplus \omega_{B} \rightarrow \omega_R
\rightarrow 0.
\]
\item If $\dim (R)=1$, then there exists a graded exact sequence 
\[
0 \rightarrow
\omega_{A, B} \rightarrow \omega_R
\rightarrow \omega_T.
\]
\item If $\dim (R)\ge 2$, then $\omega_R \cong \omega_{A, B}$.
\end{enumerate}
\end{proposition}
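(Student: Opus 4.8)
The plan is to exploit the graded exact sequence \eqref{eq0}, which under Setup \ref{conditiona} becomes the short exact sequence
\[
0 \longrightarrow R \overset{\iota}{\longrightarrow} A \oplus B \overset{\phi}{\longrightarrow} T \longrightarrow 0
\]
of $R$-modules (the map $\phi$ is surjective since $f$ is). I would first reduce to a convenient presentation: by Remark \ref{rem1.5} choose a positively graded polynomial ring $S$ over a regular local ring with a graded surjection $S \to R$; since $A$ and $B$ are quotients of $R$ (via the two projections composed with... actually via $R \twoheadrightarrow A$, $R \twoheadrightarrow B$, which are split by $a \mapsto (a, f(a))$ up to the fiber condition), and $T$ is a quotient of $R$, all four rings are finitely generated graded $S$-modules. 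Set $\dim S = s$ and apply the functor ${}^{*}\!\Ext_S^\bullet(-, S)$ to the short exact sequence above. Since $S$ is Gorenstein, Lemma \ref{gotowatanabe} identifies $\omega_R \cong {}^{*}\!\Ext_S^{s - \dim R}(R, S)$, and similarly for $A$, $B$, $T$, while the higher and lower Ext modules are controlled by depth/dimension.

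The key computation is the long exact sequence in ${}^{*}\!\Ext_S(-,S)$. Because $\dim T = 0$, we have ${}^{*}\!\Ext_S^i(T,S) = 0$ for $i \neq s$ and ${}^{*}\!\Ext_S^s(T,S) \cong \omega_T$. Also ${}^{*}\!\Ext_S^i(A\oplus B, S) = {}^{*}\!\Ext_S^i(A,S)\oplus {}^{*}\!\Ext_S^i(B,S)$, and by Lemma \ref{gotowatanabe} the lowest nonvanishing one sits in cohomological degree $s - \dim A$ (resp. $s - \dim B$) and equals $\omega_A$ (resp. $\omega_B$). Now I split into the three cases by comparing $\dim R = \max\{\dim A,\dim B\}$ (Remark \ref{rem:12}) with $s$:

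First, if $\dim R \geq 2$, then $s - \dim R \leq s - 2$, so in cohomological degree $s - \dim R$ and in degree $s - \dim R + 1$ the term ${}^{*}\!\Ext_S^\bullet(T,S)$ vanishes (as $\dim T = 0$ forces these to live only in degree $s$). Hence the long exact sequence gives ${}^{*}\!\Ext_S^{s-\dim R}(R,S) \cong {}^{*}\!\Ext_S^{s-\dim R}(A\oplus B, S)$, i.e. $\omega_R \cong \omega_{A,B}$ — where one notes that in the case $\dim A > \dim B$ the summand ${}^{*}\!\Ext_S^{s-\dim A}(B,S)$ vanishes since $s - \dim A < s - \dim B$ is below the range where ${}^{*}\!\Ext_S(B,S)$ is nonzero, so ${}^{*}\!\Ext_S^{s-\dim R}(A\oplus B,S) = \omega_A = \omega_{A,B}$, and symmetrically. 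Second, if $\dim R = 1$, then $s - \dim R = s-1$: the relevant stretch of the long exact sequence reads
\[
0 \longrightarrow {}^{*}\!\Ext_S^{s-1}(T,S) \longrightarrow {}^{*}\!\Ext_S^{s-1}(A\oplus B, S) \longrightarrow {}^{*}\!\Ext_S^{s-1}(R,S) \longrightarrow {}^{*}\!\Ext_S^{s}(T,S)
\]
and since ${}^{*}\!\Ext_S^{s-1}(T,S) = 0$ (as $\dim T = 0$) while ${}^{*}\!\Ext_S^{s-1}(A\oplus B,S) = \omega_{A,B}$ and ${}^{*}\!\Ext_S^{s}(T,S) = \omega_T$, this yields exactly $0 \to \omega_{A,B} \to \omega_R \to \omega_T$. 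Third, if $\dim R = 0$, then $s - \dim R = s$, and the tail of the long exact sequence is
\[
{}^{*}\!\Ext_S^{s-1}(R,S) \longrightarrow {}^{*}\!\Ext_S^{s}(T,S) \longrightarrow {}^{*}\!\Ext_S^{s}(A\oplus B,S) \longrightarrow {}^{*}\!\Ext_S^{s}(R,S) \longrightarrow 0;
\]
here ${}^{*}\!\Ext_S^{s-1}(R,S) = 0$ since $\dim R = 0$ as well, so we obtain $0 \to \omega_T \to \omega_A \oplus \omega_B \to \omega_R \to 0$, which is the claimed sequence (note $\omega_{A,B} = \omega_A \oplus \omega_B$ when $\dim A = \dim B = 0$, and when $\dim R = 0$ necessarily $\dim A = \dim B = 0$).

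The main obstacle I anticipate is not the long exact sequence bookkeeping itself but making sure the maps are the "right" graded maps and that the vanishing claims for ${}^{*}\!\Ext_S^i$ below the top are airtight: one needs $\grade_S(\operatorname{ann} M) = s - \dim M$ type statements, which for $T$ (being of dimension $0$, hence $\mathfrak{m}_S$-primary support) is clean, but for $A$, $B$, $R$ one must invoke that ${}^{*}\!\Ext_S^i(M,S) = 0$ for $i < s - \dim M$ — this is standard (it is $\grade$, \cite[Theorem 3.5.7]{bruns1998cohen}) but should be cited carefully in the graded setting. A secondary point to verify is that the reduction in Remark \ref{rem:gradedaoyamagoto}(1)-style (killing a regular sequence in $\ker(S\to R)$) is compatible with the whole diagram, i.e. that the same $S$ can be used simultaneously for $R$, $A$, $B$, $T$; this is immediate because $A$, $B$, $T$ are all $R$-algebra quotients of $R$ itself, hence $S$-module quotients.
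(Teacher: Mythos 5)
Your proposal is correct and follows essentially the same route as the paper: apply ${}^{*}\!\Ext_S(-,S)$ to the fiber-product sequence \eqref{eq0} over a graded Gorenstein presentation $S\to R$, use \autoref{gotowatanabe} to identify $\Ext_S^{t}(R,S)\cong\omega_R$, $\Ext_S^{t}(A,S)\oplus\Ext_S^{t}(B,S)\cong\omega_{A,B}$ and the vanishing of $\Ext_S^{t-1}(R,S)$, and then read off the three cases from the (non)vanishing of $\Ext_S^{t}(T,S)$ and $\Ext_S^{t+1}(T,S)$ according to $\dim(R)$. The bookkeeping in all three cases matches the paper's argument.
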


\begin{proof}
By \autoref{rem1.5}, there are a graded polynomial ring $S$ over a regular local ring $S_0$ and a surjective graded homomorphism $S\to R$. Set $t=\dim(S)-\dim(R)$. By applying the functor $\Hom_S(-, S)$ to the short exact sequence \eqref{eq0} (note that $\phi$ is surjective since $f$ and $g$ are surjective), we get a graded exact sequence 
\[
\Ext_S^{t-1}(R,S) \rightarrow \Ext_S^{t}(T,S) \rightarrow
\Ext_S^{t}(A,S) \oplus \Ext_S^{t}(B,S) \rightarrow \Ext_S^{t}(R,S)
\rightarrow \Ext_S^{t+1}(T,S).
\]
By \autoref{gotowatanabe}, we have $\Ext_S^{t}(R,S) \cong \omega_R$ and $\Ext_S^{t-1}(R,S)=0$. We further have an isomorphism $\Ext_S^{t}(A,S) \oplus \Ext_S^{t}(B,S) \cong \omega_{A, B}$ by \autoref{gotowatanabe} and \autoref{rem:12}. 
Hence, the above exact sequence induces the following:
\begin{align}\label{eq1}
0\rightarrow \Ext_S^{t}(T,S) \rightarrow
\omega_{A, B} \rightarrow \omega_R
\rightarrow \Ext_S^{t+1}(T,S).
\end{align}
Therefore, noting that $T$ is Artinian and applying \autoref{gotowatanabe}, we get the assertion.
\end{proof}

For use in the next subsection, we inductively define the fiber product of $n$ rings.

\begin{definition}
Let \(A_i\) and $T$ be positively graded Noetherian rings, \( A_i \to T \) graded ring homomorphisms for $i=1, \dots, n$.
We inductively define the fiber product of \( A_1, \dots, A_n \) over \( T \) with respect to \( f_1, \dots, f_n \), that is, 
\[
A_1 \times_T \cdots \times_T A_n:= (A_1 \times_T \cdots \times_T A_{n-1})\times_T A_{n}.
\]
\end{definition}




\subsection{Stanley--Reisner rings}
Next, we summarize the terminology related to Stanley--Reisner rings that is necessary for this paper.
In particular, we introduce the fact that the Stanley-Reisner ring of a disconnected simplicial complex has the structure of a fiber product~(see \autoref{lem:SRfiberproduct}).
{We begin by stating the definition of a simplicial complex.
For a set $X$, we denote its power set by $\mathcal{P}(X)$.

\begin{definition}
{Let \( V \) be a finite set.  
A subset \( \Delta \subseteq \mathcal{P}(V) \) is said to be a \textit{simplicial complex on \( V \)} if it satisfies the following three conditions:}
\begin{enumerate}
    \item \( \Delta \neq \emptyset \);
    \item $\{ v \} \in \Delta$ for any $v \in V$;
    \item For every \( \sigma \in \Delta \) and every \( \tau \in \mathcal{P}(V) \),
    we have \( \tau \in \Delta \) if \( \tau \subseteq \sigma \).
\end{enumerate}
Given a simplicial complex \( \Delta \) on a finite set \( V \), we call \( V \) the \textit{vertex set} of \( \Delta \), and denote it by \( V(\Delta) \).
\end{definition}

For simplicity, we often omit mention of the vertex set and refer to \( \Delta \) simply as a simplicial complex.

\begin{remark}
A simplicial complex $\Delta$ can be regarded as a partially ordered set under the inclusion relation $\subseteq$.  
Given two simplicial complexes $\Delta_1$ and $\Delta_2$,  
if there exists an order-preserving isomorphism between them as partially ordered sets,  
we say that $\Delta_1$ and $\Delta_2$ are \textit{isomorphic}.  
Isomorphic simplicial complexes are considered to carry essentially the same combinatorial data, and we identify them accordingly.
Any simplicial complex $\Delta$ on $V(\Delta)$ is isomorphic to some simplicial complex $\Delta'$ on $\{1, \cdots, n\}$,
where $n=|V(\Delta)|$.
Thus, we may always assume that the vertex set $V(\Delta)$ of a simplicial complex $\Delta$ is the finite set of integers $\{1, \cdots, n_{\Delta}\}$ for some integer $n_{\Delta} > 0$.
\end{remark}

Let $\kk$ be a field.
Let $n \ge 1$ be an integer
and let \( S = \kk[x_1, \ldots, x_n] \) be the polynomial ring in \( n \) variables over \( \kk \), equipped with the grading \( \deg(x_i) = 1 \) for \( i = 1, \ldots, n \).  
Given a simplicial complex \( \Delta \) on $\{1, \cdots, n\}$,
recall that the \emph{Stanley--Reisner ideal} \( I_{\Delta} \subseteq S \) is an ideal of $S$ generated by all squarefree monomials \( x_{i_1}x_{i_2} \cdots x_{i_s} \) such that \( \{i_1, \ldots, i_s\} \notin \Delta \).
$\kk[\Delta]:=S/I_{\Delta}$ is called the {\it Stanley--Reisner ring} of $\Delta$. Let us remind that the {\it dimension} of $\sigma\in\Delta$ is $\dim(\sigma)=|\sigma|-1$ and the {\it dimension} of $\Delta$ is $\dim(\Delta)=\max\{\dim(\sigma):\sigma\in\Delta\}$.
We say that \( \sigma \in \Delta \) is an \emph{\( i \)-face} of \( \Delta \) if \( \dim(\sigma) = i \).  
A face of \( \Delta \) that is maximal with respect to inclusion is called a \emph{facet}.
We write \( \FF(\Delta) \) for the set of facets of \( \Delta \).
Given a face \( \sigma \in \Delta \), we define its 
\emph{link} in \( \Delta \) as
$\lk_{\Delta}(\sigma) = \{ \tau \in \Delta : \tau \cup \sigma \in \Delta \text{ and } \tau \cap \sigma = \emptyset \}.$

{Let $\Delta$ be a simplicial complex.  
Two vertices $v, w \in V(\Delta)$ are said to be {\it connected} if there exists a finite sequence $\sigma_1, \ldots, \sigma_k \in \Delta$ in $\Delta$ such that
$v \in \sigma_1$,
$w \in \sigma_k$, and
 $\sigma_i \cap \sigma_{i+1} \neq \emptyset$ for all $1 \leq i < k$.
This defines an equivalence relation $\sim$ on $V(\Delta)$.
Let $\{v_1, \ldots, v_n\}$ be a complete set of representatives of $V(\Delta)$ under the equivalence relation $\sim$.  
For each $1 \le i \le n$, define
\[
V_i := \{ v \in V(\Delta) : v \sim v_i \}, \quad
\Delta_i := \{ \sigma \in \mathcal{P}(V_i) : \sigma \in \Delta \}.
\]
Then $\Delta_i$ is a simplicial complex on the vertex set $V_i$.  
$\Delta_1, \cdots, \Delta_n$
are called \textit{connected components} of $\Delta$.
If the number of connected components is $1$, then $\Delta$ is said to be \textit{connected}.}


Finally, we introduce the fact that the Stanley--Reisner ring \( \kk[\Delta] \) arising from a disconnected simplicial complex \( \Delta \) is the fiber product over \( \kk \) of the Stanley--Reisner rings corresponding to its connected components.
We define the disjoint union of simplicial complexes as follows.

\begin{definition}\label{def:disjunionsimpcomp}
{Let \( \Delta_1 \) and \( \Delta_2 \) be simplicial complexes.  
For each \( i \in \{1, 2\} \) and each \( \sigma \in \Delta_i \), define $\tilde{\sigma}_i := \{ (v, i) : v \in \sigma \}$.  
Then $\Delta_i \times \{i\} := \{ \tilde{\sigma}_i : \sigma \in \Delta_i \}$ is a simplicial complex on $\{ (v, i) : v \in V(\Delta_i) \}$ and $\Delta_i$ is isomorphic to $\Delta_i \times \{i\}$.
Moreover, the (disjoint) union $(\Delta_1 \times \{1\}) \sqcup (\Delta_2 \times \{2\})$  
is a simplicial complex on $V(\Delta_1 \times \{1\}) \sqcup V(\Delta_2 \times \{2\})$.
We call
$(\Delta_1 \times \{1\}) \sqcup (\Delta_2 \times \{2\})$ the \textit{disjoint union} of $\Delta_1$ and $\Delta_2$, and denote it by $\Delta_1 \sqcup \Delta_2$.
For a collection of simplicial complexes $\Delta_1, \ldots, \Delta_n$,  
their disjoint union $\bigsqcup_{i=1}^n \Delta_i$ is defined inductively by  
$\bigsqcup_{i=1}^n \Delta_i := \left( \bigsqcup_{i=1}^{n-1} \Delta_i \right) \bigsqcup \Delta_n$.}
\end{definition}

\begin{remark}
{Any simplicial complex $\Delta$ is isomorphic to the disjoint union of connected simplicial complexes.  
Indeed, let $\Delta_1, \ldots, \Delta_n$ be the connected components of $\Delta$.  
Then $\Delta$ is isomorphic to the disjoint union $\bigsqcup_{i=1}^n \Delta_i$.  
Note that $\dim(\Delta) = \max \{ \dim(\Delta_i) : 1 \le i \le n \}$.}
\end{remark}

\begin{lemma}\label{lem:SRfiberproduct}
For each $i=1,\cdots,n$,
let $\Delta_i$ be a simplicial complex
and set $\Delta=\bigsqcup_{i=1}^n \Delta_i$.
Then we have $$\kk[\Delta] \cong \kk[\Delta_1] \times_\kk \kk[\Delta_2] \times_\kk \cdots \times_\kk \kk[\Delta_n].$$
\end{lemma}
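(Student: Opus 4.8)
The plan is to induct on $n$, reducing everything to the case $n=2$, and then to prove that case by writing down an explicit graded surjection from a polynomial ring onto the fiber product and computing its kernel.

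For $n=1$ there is nothing to do. For $n\ge 3$ I would use the inductive definitions of the iterated disjoint union (\autoref{def:disjunionsimpcomp}) and of the iterated fiber product to write
\[
\kk[\Delta]=\kk\Bigl[\Bigl(\textstyle\bigsqcup_{i=1}^{n-1}\Delta_i\Bigr)\sqcup\Delta_n\Bigr]
\cong\kk\Bigl[\textstyle\bigsqcup_{i=1}^{n-1}\Delta_i\Bigr]\times_\kk\kk[\Delta_n]
\cong\bigl(\kk[\Delta_1]\times_\kk\cdots\times_\kk\kk[\Delta_{n-1}]\bigr)\times_\kk\kk[\Delta_n],
\]
where the first isomorphism is the $n=2$ case applied to the two complexes $\bigsqcup_{i=1}^{n-1}\Delta_i$ and $\Delta_n$, the second is the induction hypothesis (applied over the graded $\kk$-algebra $\kk[\Delta_n]$, using that a graded $\kk$-algebra isomorphism automatically respects the augmentation onto $\kk$), and the right-hand side is the iterated fiber product by definition. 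So the whole statement comes down to $\kk[\Delta_1\sqcup\Delta_2]\cong\kk[\Delta_1]\times_\kk\kk[\Delta_2]$.

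For the case $n=2$ I would first note that relabelling vertices does not affect the isomorphism type of a Stanley--Reisner ring, so we may assume $V(\Delta_1)=\{1,\dots,p\}$ and $V(\Delta_2)=\{1,\dots,q\}$ and regard $\Delta:=\Delta_1\sqcup\Delta_2$ as a complex on $\{1,\dots,p+q\}$ with $\Delta_1$ supported on the first $p$ vertices and $\Delta_2$ on the last $q$; put $S=\kk[x_1,\dots,x_p,y_1,\dots,y_q]$, $A=\kk[\Delta_1]=\kk[x_1,\dots,x_p]/I_{\Delta_1}$, $B=\kk[\Delta_2]=\kk[y_1,\dots,y_q]/I_{\Delta_2}$, with $I_{\Delta_1},I_{\Delta_2}$ viewed inside $S$. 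The key combinatorial observation, immediate from \autoref{def:disjunionsimpcomp}, is that a nonempty face of $\Delta$ is either a face of $\Delta_1$ inside the first block or a face of $\Delta_2$ inside the second block, and no face meets both blocks; sorting the squarefree nonfaces of $\Delta$ into these cases yields
\[
I_\Delta=I_{\Delta_1}S+I_{\Delta_2}S+\bigl(x_iy_j:1\le i\le p,\ 1\le j\le q\bigr),
\]
because every squarefree monomial whose support meets both blocks is divisible by some $x_iy_j$.

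Finally, I would consider the graded $\kk$-algebra homomorphism $\Phi\colon S\to A\times_\kk B$ given by $\Phi(x_i)=(\overline{x_i},0)$ and $\Phi(y_j)=(0,\overline{y_j})$, where the fiber product is taken with respect to the canonical surjections $A\to A/A_{>0}\cong\kk$ and $B\to B/B_{>0}\cong\kk$ as in \autoref{conditiona}. Decomposing $f\in S$ as $f=c+g+h+w$ with $c\in\kk$, $g\in\kk[x_1,\dots,x_p]_{>0}$, $h\in\kk[y_1,\dots,y_q]_{>0}$ and $w$ a $\kk$-linear combination of monomials divisible by some $x_iy_j$, one checks that $\Phi(f)=(c+\overline g,\,c+\overline h)$, which lies in the fiber product (both coordinates have image $c$ in $\kk$) and shows $\Phi$ is surjective (lift the two coordinates of a given element and add the constant); moreover $\Phi(f)=0$ iff $c=0$, $g\in I_{\Delta_1}$ and $h\in I_{\Delta_2}$, so $\ker\Phi=I_{\Delta_1}S+I_{\Delta_2}S+(x_iy_j:i,j)=I_\Delta$. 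Hence $\kk[\Delta]=S/I_\Delta\cong A\times_\kk B=\kk[\Delta_1]\times_\kk\kk[\Delta_2]$, finishing $n=2$ and the induction. The bookkeeping (well-definedness, gradedness, surjectivity of $\Phi$) is routine; I expect the only real point to be the identification $\ker\Phi=I_\Delta$, i.e.\ the combinatorial fact that the nonfaces of a disjoint union are precisely the nonfaces of the two pieces together with all ``mixed'' squarefree monomials, the latter being generated by the products $x_iy_j$.
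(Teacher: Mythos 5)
Your proof is correct, but it takes a genuinely different route from the paper. The paper also reduces to $n=2$, but then works \emph{inside} $A=\kk[\Delta]$: it sets $\fka_1=\MM_{\kk[\Delta_2]}A$ and $\fka_2=\MM_{\kk[\Delta_1]}A$, observes that $\fka_1\cap\fka_2=(0)$ (the same ``no mixed monomials survive'' fact that underlies your identity $I_\Delta=I_{\Delta_1}S+I_{\Delta_2}S+(x_iy_j)$), and then invokes Ogoma's general decomposition lemma \cite[Lemma~3.1]{ogoma1984existence}, which says that a ring with two ideals meeting in zero is the fiber product of the two quotients over the quotient by their sum; here that sum is $\MM_A$ and the base is $\kk$. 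You instead build the isomorphism from the outside, presenting $A\times_\kk B$ explicitly as $S/\ker\Phi$ and matching $\ker\Phi$ with $I_\Delta$ by the combinatorial description of the nonfaces of a disjoint union. The paper's argument is shorter and delegates the ring-theoretic content to a citation, whereas yours is self-contained, makes the gradedness of the isomorphism manifest, and records the useful presentation of the Stanley--Reisner ideal of a disjoint union; both hinge on the same elementary observation about faces meeting both vertex blocks, and your induction step (a graded isomorphism of augmented $\kk$-algebras induces an isomorphism of fiber products over $\kk$) is the same implicit reduction the paper performs when it says it suffices to treat $n=2$.
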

\begin{proof}
It suffices to prove the case where \( n=2 \).
Let \( A = \kk[\Delta] \), \( \mathfrak{a}_1 = \MM_{\kk[\Delta_2]} A \) and \( \mathfrak{a}_2 = \MM_{\kk[\Delta_1]} A \).  
Then we have \( \mathfrak{a}_1 \cap \mathfrak{a}_2 = (0) \).  
Applying \cite[Lemma 3.1]{ogoma1984existence} with \( \mathfrak{a}_0 := \mathfrak{a}_1 + \mathfrak{a}_2 = \MM_A \)
and \( A_0 := A / \mathfrak{a}_0 = \kk \), we obtain \( A \cong A_1 \times_\kk A_2 \), where \( A_i := A / \mathfrak{a}_i \cong \kk[\Delta_i] \) for \( i=1,2 \).
Thus we have $\kk[\Delta]\cong \kk[\Delta_1] \times_\kk \kk[\Delta_2]$.
\end{proof}

\begin{remark}
Let $n \in \ZZ$.
For all \( 1 \leq i \leq n \),
let $\Delta_i$ be a connected simplicial complex and let $A_i=\kk[\Delta_i]$.
Set $\Delta=\bigsqcup_{i=1}^n \Delta_i$ and
$R=\kk[\Delta]$.
Then $R$ is Cohen--Macaulay on the punctured spectrum if and only if $A_i$ is Cohen--Macaulay on the punctured spectrum for any $1 \le i \le n$.
\end{remark}
\begin{proof}
Let $\Delta$ be a simplicial complex. It is well known that the Stanley--Reisner ring $\kk[\Delta]$ is Cohen--Macaulay on the punctured spectrum if and only if $\kk[\lk_\Delta(x_i)]$ is Cohen--Macaulay for every vertex $i \in V(\Delta)$, due to the well-known isomorphism
$\kk[\Delta]_{x_i} \cong \kk[\lk_\Delta(x_i)][x_i, x_i^{-1}]$.
The condition that $\kk[\lk_\Delta(x_i)]$ is Cohen--Macaulay for all $j \in V(\Delta)$ is in turn equivalent to requiring that $\kk[\lk_{\Delta_i}(x_j)]$ is Cohen--Macaulay for all $1 \le i \le n$ and for all $j \in V(\Delta_i)$, which is further equivalent to the assertion that each ring $A_i$ is Cohen--Macaulay on the punctured spectrum for $1 \le i \le n$.
\end{proof}

\section{Trace ideals of fiber products}\label{sec:trfib}
In this section, we study the canonical trace of a fiber product.
Let $(A, \fkm_A)$, $(B, \fkm_B)$, and $(T, \fkm_T)$ be positively graded Noetherian ring.
Let $f:A \to T$ and $g: B \to T$ be graded homomorphisms of graded rings. Set 
\begin{center}
$R = A \times_T B$ \quad and \quad $d:=\dim(R)=\max\{\dim(A), \dim(B)\}$
\end{center}
(\autoref{rem:12}). We assume that \autoref{conditiona}.
In addition, we use the following notation. 

\begin{setup}\label{setup:domainor}
Let us define graded ring homomorphisms as follows:
\begin{align*}
\pi_1 : R \to A; (a,b)\mapsto a & \quad \text{and} \quad \pi_2 : R \to B; (a,b)\mapsto b\\
\iota_1: A \to R; a\mapsto (a,f(a)) & \quad \text{and} \quad \iota_2: B \to R; b\mapsto (g(b),b).
\end{align*}
We note that $\pi_1$ and $\pi_2$ are surjective since $f$ and $g$ are surjective (see \autoref{conditiona}). 
\end{setup}

\begin{remark}\label{rem32}
Assume that $T$ is a field.
Let $I\subsetneq A$ be a graded ideal of $A$
and let $J\subsetneq B$ be a graded ideal of $B$.
Then $IR \cap JR=(0)$, that is, $IR+JR=IR \oplus JR$.
In particular, $\mm_R=\mm_A R \oplus \mm_B R$~(see \autoref{lem:27}).
\end{remark}

For a commutative ring \( S \) and an ideal \( I \subseteq S \), we denote the {\it radical} of \( I \) by \( \sqrt{I}_S \).
When there is no risk of confusion, we simply write \( \sqrt{I} \).

\begin{lemma}\label{lem:extension}
Assume that $T$ is a field. Let $I\subsetneq A$ be a graded ideal of $A$
and let $J\subsetneq A$ be a graded ideal of $B$.
Then the following hold:
\begin{itemize}
\item[\rm (1)] $I R \cap A=I$ and $J R \cap B=J$;
\item[\rm (2)] $\sqrt{I R}_R=\sqrt{I}_A R \oplus \sqrt{(0)}_B R$;
\item[\rm (3)] $\sqrt{IR\oplus JR}_R=\sqrt{I R}_R \oplus \sqrt{J R}_R=\sqrt{I}_A R \oplus \sqrt{J}_B R$.
\end{itemize}
\begin{proof}
(1): 
We only need to show that $I R \cap A=I$. Since $IR \cap A \supseteq I$ holds in general, it suffices to show that $IR \cap A \subseteq I$.
Let $x \in IR \cap A$ be arbitrary.
By definition of $x$, there exist elements $x_i \in I$, $a_i \in A$, and $b_i\in B$ for $i=1,2, \dots, n$ such that
\[
(x, f(x)) = \sum_{i=1}^n (a_i, b_i)(x_i, f(x_i)) = \left( \sum_{i=1}^n a_i x_i, \sum_{i=1}^n b_i f(x_i) \right).
\]
In particular, we have $x = \sum_{i=1}^n a_i x_i \in I$, as desired.

(2):
%
Note that
$\sqrt{IR}_R \supseteq \sqrt{I}_A R$
holds by \cite[Exercise~1.18]{atiyah2018introduction}.
Similarly,
we have
$\sqrt{IR}_R \supseteq \sqrt{(0)}_R \supseteq \sqrt{(0)}_B R$.
Therefore,
$\sqrt{IR}_R \supseteq \sqrt{I}_A R \oplus \sqrt{(0)}_B R$.
We prove the reverse inclusion.
Let $(x,y) \in \sqrt{IR}_R$.
Then there exists an integer $n>0$ such that
$(x,y)^n = (x^n, y^n) \in IR$.
Thus, $x^n \in I$ and, since $I\subseteq \fkm_A$, $y^n = 0$.
Hence $x \in \sqrt{I}_A$ and $y \in \sqrt{(0)}_B$.
Since $x \in \mathfrak{m}_A$ and $y \in \mathfrak{m}_B$,
we have $(x,0), (0,y) \in R$.
Therefore,
$(x,y) = (x,0)+(0,y) \in \sqrt{I}_A R \oplus \sqrt{(0)}_B R$,
as desired.

(3): 
By (2), we observe 
\[
\sqrt{IR}_R \oplus \sqrt{JR}_R = (\sqrt{I}_A R \oplus \sqrt{(0)}_B R) + (\sqrt{(0)}_A R \oplus \sqrt{J}_B R) =\sqrt{I}_A R \oplus \sqrt{J}_B R.
\]
Hence, it suffices to show that
$\sqrt{IR \oplus JR}_R = \sqrt{IR}_R \oplus \sqrt{JR}_R$. The inclusion $\sqrt{IR\oplus JR}_R \supseteq \sqrt{IR}_R \oplus \sqrt{JR}_R$ follows from the inclusions $\sqrt{IR\oplus JR}_R \supseteq \sqrt{IR}_R$ and $\sqrt{IR\oplus JR}_R \supseteq \sqrt{JR}_R$, which are clear. The rest is to prove that $\sqrt{IR \oplus JR}_R \subseteq \sqrt{IR}_R \oplus \sqrt{JR}_R$.
Let $(x,y) \in \sqrt{IR \oplus JR}_R$ be arbitrary.
Then there exists an integer $n>0$ such that
$(x,y)^n = (x^n, y^n) \in IR \oplus JR$.
Since $I \neq A$ and $J \neq B$, it follows that $x^n \in I$ and $y^n \in J$, and in particular
$(x,0) \in \sqrt{IR}_R$ and $(0,y) \in \sqrt{JR}_R$.
Therefore,
we have
$(x,y) = (x,0)+(0,y) \in \sqrt{IR}_R \oplus \sqrt{JR}_R$.
\end{proof}
\end{lemma}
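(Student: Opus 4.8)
The plan is to run everything through the concrete description of $R=A\times_\kk B$ afforded by \autoref{conditiona}, \autoref{lem:27}, and \autoref{setup:domainor}. The key preliminary observation is that a proper \emph{graded} ideal $I\subsetneq A$ is automatically contained in $A_{>0}=\fkm_A$: its degree-$0$ part, being a proper $\kk$-subspace of $A_0=\kk$, is zero. Likewise $J\subseteq\fkm_B$. Since $f$ annihilates $A_{>0}$, the embedding $\iota_1\colon A\to R$ sends each $x\in I$ to $(x,0)$, so $IR=\{(y,0):y\in I\}$; symmetrically (via $\iota_2$) $JR=\{(0,z):z\in J\}$, and hence $IR\oplus JR=\{(y,z):y\in I,\ z\in J\}$, the sum being direct by \autoref{rem32}. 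With these normal forms in hand, all three assertions become coordinatewise computations.

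For (1), identifying $A$ with $\iota_1(A)$, an element of $IR\cap A$ has the form $(a,f(a))\in\{(y,0):y\in I\}$, which forces $a\in I$; the reverse inclusion is clear, so $IR\cap A=I$, and $JR\cap B=J$ is the mirror statement. For (2), first note that $\sqrt{I}_A$ and $\sqrt{(0)}_B$ are again proper graded ideals (the radical of a graded ideal is graded; a nonzero degree-$0$ element is a unit, hence not nilpotent), so the first paragraph applies to them and gives $\sqrt{I}_AR=\{(x,0):x\in\sqrt{I}_A\}$ and $\sqrt{(0)}_BR=\{(0,y):y\in\sqrt{(0)}_B\}$. Now for $(x,y)\in R$ one has $(x,y)\in\sqrt{IR}_R$ iff $(x^n,y^n)\in\{(u,0):u\in I\}$ for some $n>0$, i.e.\ iff $x^n\in I$ and $y^n=0$, i.e.\ iff $x\in\sqrt{I}_A$ and $y\in\sqrt{(0)}_B$; the set thus described is exactly $\sqrt{I}_AR\oplus\sqrt{(0)}_BR$ (a direct sum by \autoref{rem32}). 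One could instead get the inclusion $\supseteq$ abstractly — the extension of $\sqrt{I}_A$ sits inside $\sqrt{IR}_R$, and $\sqrt{(0)}_BR\subseteq\sqrt{(0)}_R\subseteq\sqrt{IR}_R$ since nilpotents of $B$ map to nilpotents of $R$ — and reserve the coordinatewise argument for $\subseteq$.

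For (3), applying (2) to $I$ and, in the mirrored form, to $J$ yields $\sqrt{IR}_R=\sqrt{I}_AR\oplus\sqrt{(0)}_BR$ and $\sqrt{JR}_R=\sqrt{(0)}_AR\oplus\sqrt{J}_BR$; adding these and absorbing $\sqrt{(0)}_A\subseteq\sqrt{I}_A$ and $\sqrt{(0)}_B\subseteq\sqrt{J}_B$ gives $\sqrt{IR}_R+\sqrt{JR}_R=\sqrt{I}_AR+\sqrt{J}_BR$, which is direct by \autoref{rem32}. For the remaining equality, the first paragraph gives $IR\oplus JR=\{(y,z):y\in I,\ z\in J\}$, so $(x,y)\in\sqrt{IR\oplus JR}_R$ iff $x^n\in I$ and $y^n\in J$ for some $n$, i.e.\ iff $x\in\sqrt{I}_A$ and $y\in\sqrt{J}_B$; this is precisely $\sqrt{I}_AR\oplus\sqrt{J}_BR$.

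The main point to be careful about is not deep but is a matter of bookkeeping: being precise that ``$IR$'' denotes extension along $\iota_1$ and ``$IR\cap A$'' intersection with $\iota_1(A)$, and checking that when a radical element $(x,y)$ is reassembled from its two coordinates the resulting pair actually lies in $R$. This last check works out exactly because $\sqrt{I}_A$, $\sqrt{J}_B$, and the nilradicals of $A$ and $B$ all lie inside the irrelevant ideals, so the defining condition $f(x)=0=g(y)$ is satisfied automatically; everything else is routine.
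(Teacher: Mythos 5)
Your proposal is correct and follows essentially the same route as the paper: both rest on the observation that a proper graded ideal of $A$ (resp.\ $B$) lies in $A_{>0}=\ker f$ (resp.\ $B_{>0}=\ker g$), so that $IR=\{(y,0):y\in I\}$ and everything reduces to coordinatewise computations of powers and radicals in $A\times B$. Your upfront ``normal form'' for $IR$ and $JR$ just packages the inline calculations the paper performs in each part (the paper outsources one inclusion of (2) to \cite[Exercise~1.18]{atiyah2018introduction}, which you verify directly), so there is nothing further to add.
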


\begin{lemma}\label{lem:AhOK}
Let $M$ be a graded $A$-module.
Consider $M$ as a graded $R$-module via $\pi_1: R \to A$.
Then the following hold:
\begin{itemize}
\item[\rm (1)]
For a homogeneous element $(a,b) \in
\tr_R(M)$, we have $b \in (0):_B \MM_B$;
\item[\rm (2)]
If
$B \neq B_0$,
then we have $\tr_R(M) \neq R$.
\end{itemize}
\begin{proof}
(1):
Since \((a, b) \in \operatorname{tr}_R(M)\),  
we can write \((a, b) = \sum_{i=1}^n \phi_i(a_i)\),  
where \(\phi_i \in {}^*\operatorname{Hom}_R(M, R)\) and \(a_i \in M\) are homogeneous elements.
Take any homogeneous element \(b' \in \mathfrak{m}_B\).  
Note that \((0, b') \in R\). Then we have
$(0, bb') = (0, b') (a, b) = \sum_{i=1}^n \phi_i((0, b) a_i) = \sum_{i=1}^n \phi_i(0) = (0, 0)$.
Thus, \(bb' = 0\), and hence \(b \in (0) :_B \mathfrak{m}_B\).

(2):
Suppose that \(\operatorname{tr}_R(M) = R\).  
Then there exists a graded \(R\)-homomorphism \(\varphi: M \to R\)  
and a homogeneous element \(x \in M\) such that \(\varphi(x)=1_R\).
Take a non-zero homogeneous element \(b'' \in B \setminus B_0\).  
Then \((0, b'') \in R\), and we compute
$(0, b'') = (0, b'') \varphi(x) = \varphi((0, b'') x) = \varphi(0x) = \varphi(0) = (0, 0)$.
It follows that \(b'' = 0\), which is a contradiction.
\end{proof}
\end{lemma}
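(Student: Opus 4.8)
The plan is to exploit that $M$ receives its $R$-module structure through the surjection $\pi_1\colon R\to A$, so that $\ker(\pi_1)$ acts as zero on $M$; note that $\ker(\pi_1)=\{(a,b)\in R:a=0\}=\{(0,b):g(b)=0\}=\{0\}\times B_{>0}$, since $g$ has kernel $B_{>0}$ by \autoref{conditiona}. For (1), I would take a homogeneous element $(a,b)\in\tr_R(M)$ and, by \autoref{rem:interestingfiniteness}, write $(a,b)=\sum_{i=1}^n\phi_i(m_i)$ with $\phi_i\in{}^*\Hom_R(M,R)$ and $m_i\in M$ homogeneous. Picking any homogeneous $b'\in\MM_B$ --- which has positive degree because $B_0=T$ is a field, so $\MM_B=B_{>0}$ and $(0,b')\in R$ --- and multiplying the identity by $(0,b')$ gives
\[
(0,b'b)=(0,b')\cdot(a,b)=\sum_{i=1}^n\phi_i\big((0,b')m_i\big)=\sum_{i=1}^n\phi_i(0)=0,
\]
because $(0,b')$ kills $M$. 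Hence $b'b=0$, and letting $b'$ run over the homogeneous generators of $\MM_B$ yields $b\in(0):_B\MM_B$.

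For (2), I would feed the homogeneous element $1_R=(1_A,1_B)$ into part (1): if $\tr_R(M)=R$ then $1_R\in\tr_R(M)$, so (1) forces $1_B\in(0):_B\MM_B$; but $B\neq B_0$ means $B_{>0}\neq 0$, hence $\MM_B\supseteq B_{>0}\neq 0$, and a unit cannot annihilate a nonzero ideal --- a contradiction. Therefore $\tr_R(M)\neq R$.

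I do not expect a genuine obstacle: the whole argument collapses to a one-line computation once one observes that the $R$-action on $M$ factors through $\pi_1$, so $\{0\}\times B_{>0}$ annihilates $M$. The two small points requiring care are the reduction of an arbitrary $R$-linear map $M\to R$ to a graded one, which lets us take the summands $\phi_i(m_i)$ homogeneous (this is \autoref{rem:interestingfiniteness}, needed because $M$ is not assumed finitely generated), and the identity $\MM_B=B_{>0}$, which is where the assumption that $B_0=T$ is a field enters and which guarantees that every $(0,b')$ with $b'\in\MM_B$ homogeneous actually lies in $R$.
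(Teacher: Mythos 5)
Your proof is correct and follows essentially the same route as the paper: both arguments rest on the observation that $\ker(\pi_1)=\{0\}\times B_{>0}$ acts as zero on $M$, so multiplying a trace element by $(0,b')$ kills it. The only (harmless) differences are that you make the hypothesis $\MM_B=B_{>0}$ (i.e.\ $T$ a field, which holds in every application of the lemma) explicit, and that you obtain (2) by applying (1) to $1_R$ rather than redoing the computation --- a slightly cleaner derivation than the paper's.
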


\begin{remark}\label{rem:ann}
Assume that $T$ is a field.
Then we have $(0):_R\mm_R=((0):_A \mm_A)R\oplus ((0):_B \mm_B)R$.
\end{remark}


\begin{proposition}\label{lem:OK1}
Let $M$ be a graded $A$-module.
Consider $M$ as a graded $R$-module via $\pi_1: R \to A$. Suppose that $T$ is a field.
Then the following hold:
\begin{itemize}
\item[\rm (1)]
$\tr_R(M)
\subseteq
\tr_A(M)R\oplus ((0):_B \MM_B)R$;
\item[\rm (2)]
If $\tr_{A}(M) \subseteq {\MM_A}$,
then we have $\tr_R(M)=\tr_A(M)R\oplus ((0):_B \MM_B)R$;
\item[\rm (3)]
If $B \neq B_0$ and $\tr_{A}(M) =A$,
then we have
$\tr_R(M)=\MM_{A} R\oplus ((0):_B \MM_B)R$.
\end{itemize}
\begin{proof}
(1): 
Let \( \varphi: M \to R \) be a graded $R$-homomorphism and $a\in M$ be a homogeneous element. It suffices to show that $\varphi(a)\in \tr_A(M)R\oplus ((0):_B \MM_B)R$ (see \autoref{rem:interestingfiniteness}).
Write \( \varphi(a) = (\alpha, \beta) \), where \( \alpha \in A \) and \( \beta \in B \) are homogeneous elements. 
We note that  \( \pi_1  \varphi : M \to A\) is an \( A \)-homomorphism since \( \varphi : M \to R \) is an \( R \)-homomorphism and \( \pi_1 \) is the projection onto the first component. Thus, \( \alpha = \pi_1 \varphi(a) \in \tr_A(M) \).

Suppose that $\alpha\not\in \fkm_A$. Then, $\tr_A(M)=A$; hence, $\tr_A(M)R=AR=\iota_1(A)R=R$. Hence, the assertion holds. 

Suppose that $\alpha\in \fkm_A$. By \autoref{lem:AhOK}~(1), \( \beta \in 0 :_B \mm_B\). Therefore, we have
$\varphi(a) = (\alpha, 0) + (0, \beta) = \iota_1(\alpha) + \iota_2(\beta) \in \tr_A(M)R + ((0):_B \mm_B)R$.

(2):
By (1), it is enough to show that $\tr_R(M) \supseteq \tr_A(M)R$ and $\tr_R(M) \supseteq ((0):_B \mm_B)R$. We have 
\[
\tr_R(M) \supseteq (0):_R\mm_R\supseteq ((0):_B \mm_B)R
\]
by \autoref{lem:thanks}~(3) and \autoref{rem:ann}. 
It remains to show that $\tr_R(M) \supseteq \tr_A(M)R$. 
Since $\tr_A(M)R$ is generated by $\iota_1(\tr_A(M))$ as a graded $R$-module,  
it suffices to prove that $\Im(\iota_1\varphi) \subseteq \tr_R(M)$  
for every graded $A$-homomorphism $\varphi : M \to A$. Since $\Im(\varphi) \subseteq \tr_A(M)\subseteq \fkm_A=\Ker(f)$ by the hypothesis, one can easily check that the map 
\[
\iota_1\varphi: M\to R; a\mapsto (\varphi(a), f(\varphi(a))) = (\varphi(a), 0)
\]
is $R$-linear. It follows that $\Im(\iota_1\varphi) \subseteq \tr_R(M)$. 

(3): We first prove the case $M=A$. 
We show that $\tr_R(A) \supseteq \MM_A R + ((0):_B \MM_B)R$.  
As in (2), it follows from \autoref{lem:thanks}~(3) and \autoref{rem:ann} that $\tr_R(A) \supseteq ((0):_B \mm_B)R$.  
Therefore, it suffices to show that $\MM_A R \subseteq \tr_R(A)$.
Take any homogeneous element $a \in \MM_A$.  
We claim that $\iota_1(a) = (a, 0) \in \tr_R(A)$.  
Consider the $A$-homomorphisms $t_1: A \to A$, $x \mapsto ax$, and $t_2: A \to B$, $x \mapsto 0$.  
Note that $f t_1 = g t_2 = 0$ because $f(a) = 0$.  
By the universal property of the fiber product, we get the $R$-homomorphism $\phi: A \to R$ defined by $\phi(x) = (ax, 0)$.  
Hence, $(a, 0) = \phi(1) \in \tr_R(A)$.

Next, we show that $\tr_R(A) \subseteq \MM_A R + ((0):_B \MM_B)R$.
Take any homogeneous element $x \in \tr_R(A)$.  
Then there exist graded $R$-homomorphisms $\varphi_i: A \to R$ and homogeneous elements $a_i \in A$ such that $x = \sum_{i=1}^{n} \varphi_i(a_i)$.  
Write $\varphi_i(a_i) = (\alpha_i, \beta_i)$, where $\alpha_i \in A$ and $\beta_i \in B$ are homogeneous.
If $\alpha_i \notin \MM_A$, then $\varphi_i(a_i)$ is a unit in $R$ since $\alpha_i=f(\alpha_i)=g(\beta)=\beta$ in a field $T$. This implies $\tr_R(M) = R$, contradicting \autoref{lem:AhOK}~(2).  
Thus, $\alpha_i \in \MM_A$.  
On the other hand, we have $\beta_i \in (0):_B \MM_B$ by \autoref{lem:AhOK}~(1).  
Hence, $\varphi_i(a_i) =
\iota_1(\alpha_i) + \iota_2(\beta_i) \in \MM_A R + ((0):_B \MM_B)R$ for each $i$.  
Therefore, $x \in \MM_A R + ((0):_B \MM_B)R$,
as desired.

Now, we prove the assertion (3) in general form. Since $\tr_{A}(M) =A$, there exists $s>0$ such that $M\cong A^s \oplus M'$ as $R$-modules, where $M'$ has no $A$-free summand~(see {\cite[Remark~2.2~(4)]{celikbas2023traces}}). Thus, $\tr_A(M')\subseteq \fkm_A$. 
By (2) and the case $M=A$, it follows that 
\begin{align*}
\tr_R(M) =& \tr_R(A) + \tr_R (M') \\
=& (\MM_A R \oplus [(0):_B \MM_B]R) + (\tr_A(M')R\oplus [(0):_B \MM_B]R)\\
=& \MM_A R \oplus [(0):_B \MM_B]R.
\end{align*}
\end{proof}
\end{proposition}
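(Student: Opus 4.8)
The plan is to argue throughout with \emph{graded} homomorphisms, which is legitimate by \autoref{rem:interestingfiniteness}, and to exploit the projections and inclusions of \autoref{setup:domainor} together with \autoref{lem:AhOK}, \autoref{rem:ann}, and \autoref{rem32}. For (1), fix a graded $R$-homomorphism $\varphi\colon M\to R$ and a homogeneous element $a\in M$; it suffices to show $\varphi(a)\in\tr_A(M)R+((0):_B\mm_B)R$. Write $\varphi(a)=(\alpha,\beta)$. The composite $\pi_1\varphi\colon M\to A$ is $A$-linear because $\pi_1$ is a ring homomorphism with $\pi_1\iota_1=\mathrm{id}_A$, so $\alpha=\pi_1\varphi(a)\in\tr_A(M)$. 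If $\alpha\notin\mm_A$, then the ideal $\tr_A(M)$ contains a unit, hence equals $A$, so $\tr_A(M)R=R$ and there is nothing to prove. If $\alpha\in\mm_A=\Ker f$, then \autoref{lem:AhOK}~(1) forces $\beta\in(0):_B\mm_B\subseteq\mm_B=\Ker g$, and therefore $\varphi(a)=(\alpha,0)+(0,\beta)=\iota_1(\alpha)+\iota_2(\beta)$ lies in $\tr_A(M)R+((0):_B\mm_B)R$; when $\tr_A(M)\subsetneq A$ this sum is direct by \autoref{rem32}.

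For (2), the inclusion $\subseteq$ is exactly (1), since now $\tr_A(M)\subseteq\mm_A$. For the reverse inclusion, \autoref{lem:thanks}~(3) together with \autoref{rem:ann} gives $\tr_R(M)\supseteq (0):_R\mm_R\supseteq ((0):_B\mm_B)R$, so it remains to see $\tr_R(M)\supseteq\tr_A(M)R$. Since $\tr_A(M)R$ is generated over $R$ by $\iota_1(\tr_A(M))$, it is enough to check $\Im(\iota_1\varphi)\subseteq\tr_R(M)$ for every graded $A$-homomorphism $\varphi\colon M\to A$. The key point is that $\Im\varphi\subseteq\tr_A(M)\subseteq\mm_A=\Ker f$, so the assignment $a\mapsto(\varphi(a),0)$ not only lands in the subring $R$ but is also $R$-linear (the vanishing second coordinate makes the $R$-action on the image factor through $\pi_1$), and this gives the claim.

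For (3), first treat $M=A$. The containment $((0):_B\mm_B)R\subseteq\tr_R(A)$ is as in (2). For $\mm_A R\subseteq\tr_R(A)$, given a homogeneous element $a\in\mm_A$, the maps $t_1\colon A\to A$, $x\mapsto ax$, and $t_2\colon A\to B$, $x\mapsto 0$, satisfy $ft_1=gt_2=0$ (because $f(a)=0$), so the universal property of the fiber product furnishes an $R$-homomorphism $\phi\colon A\to R$, $x\mapsto(ax,0)$, whence $(a,0)=\phi(1)\in\tr_R(A)$. Conversely, for a homogeneous $x=\sum_i\varphi_i(a_i)\in\tr_R(A)$, write $\varphi_i(a_i)=(\alpha_i,\beta_i)$; if some $\alpha_i\notin\mm_A$, then since $T$ is a field we have $\alpha_i=f(\alpha_i)=g(\beta_i)=\beta_i$, so $(\alpha_i,\beta_i)$ is a unit of $R$ and $\tr_R(A)=R$, contradicting \autoref{lem:AhOK}~(2); hence every $\alpha_i\in\mm_A$, and each $\beta_i\in(0):_B\mm_B$ by \autoref{lem:AhOK}~(1). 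This settles the case $M=A$. For general $M$ with $\tr_A(M)=A$, decompose $M\cong A^s\oplus M'$ with $M'$ having no $A$-free summand (so $\tr_A(M')\subseteq\mm_A$), and combine additivity of the trace over direct sums with part (2) applied to $M'$ and the case $M=A$.

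Most of this is bookkeeping, but two points deserve care. The genuinely delicate step is the one in (3) ruling out $\alpha_i\notin\mm_A$: it must simultaneously use that $T$ is a field — to identify $\alpha_i$ with $\beta_i$ and conclude that $(\alpha_i,\beta_i)$ is a unit of $R$ — and \autoref{lem:AhOK}~(2), which is where the hypothesis $B\neq B_0$ enters. The recurring subtlety already present in (1) and (2) is keeping track of which maps remain $R$-linear rather than merely $A$-linear after composing with $\pi_1$ or $\iota_1$, and in particular verifying that $\iota_1\varphi$ actually takes values in the subring $R$; this is precisely the place where the hypothesis on $\tr_A(M)$ is consumed.
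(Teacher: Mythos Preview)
Your proof is correct and follows essentially the same approach as the paper's own argument: the same use of \autoref{lem:AhOK}, \autoref{lem:thanks}~(3), and \autoref{rem:ann} in parts (1) and (2), and the same two-step strategy for (3) of first handling $M=A$ via the universal property of the fiber product and \autoref{lem:AhOK}~(2), then reducing the general case to this via the decomposition $M\cong A^s\oplus M'$ with $M'$ having no free $A$-summand. Your closing commentary on where the hypotheses $T$ a field and $B\neq B_0$ are actually consumed is accurate and matches the paper's logic.
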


\begin{lemma}\label{lem37}
Suppose that $\dim (A)=0$. Then,  $((0):_A \fkm_A)\omega_A=(0):_{\omega_A}\fkm_A$. 
\end{lemma}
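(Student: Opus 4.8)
The plan is to prove the two inclusions of the asserted equality separately. The inclusion $((0):_A\fkm_A)\omega_A \subseteq (0):_{\omega_A}\fkm_A$ is immediate: for homogeneous $x \in (0):_A\fkm_A$ and $m \in \omega_A$ one has $\fkm_A(xm) = (\fkm_A x)m = 0$, so $xm \in (0):_{\omega_A}\fkm_A$. The content therefore lies in the reverse inclusion.

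For ``$\supseteq$'', I would first note that both $((0):_A\fkm_A)\omega_A$ and $(0):_{\omega_A}\fkm_A$ are $A$-submodules of $\omega_A$ annihilated by $\fkm_A$ --- for the former because $\fkm_A\bigl((0):_A\fkm_A\bigr)=0$ --- and are hence $\kk$-vector subspaces of $\omega_A$, with $((0):_A\fkm_A)\omega_A \subseteq (0):_{\omega_A}\fkm_A$ by the previous paragraph. Thus it suffices to check that $(0):_{\omega_A}\fkm_A$ is $1$-dimensional over $\kk$ and that $((0):_A\fkm_A)\omega_A \neq (0)$. For the first: since $\dim(A)=0$, the Noetherian ring $A$ is Artinian, so $\mathrm{H}_{\fkm_A}^{0}(A)=A$ and hence, by \autoref{defcanon}, $\omega_A = {}^*\Hom_A(A,E_A) \cong E_A$ is the graded injective hull of $\kk=A/\fkm_A$; as $\kk \hookrightarrow E_A$ is an essential graded extension, its socle $(0):_{\omega_A}\fkm_A$ is a simple graded module and so is $1$-dimensional over $\kk$. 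For the second: $A\neq 0$ is Artinian, so $(0):_A\fkm_A \neq (0)$, while \autoref{rem:gradedaoyamagoto}~(3) applied to $A$ (whose only associated prime is $\fkm_A$, with $\dim(A/\fkm_A)=0=\dim(A)$) gives $\ann_A(\omega_A)=(0)$; if $(0):_A\fkm_A$ annihilated $\omega_A$ we would get $(0):_A\fkm_A \subseteq \ann_A(\omega_A)=(0)$, a contradiction. Therefore $((0):_A\fkm_A)\omega_A$ is a nonzero $\kk$-subspace of the $1$-dimensional space $(0):_{\omega_A}\fkm_A$, so the two coincide.

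The only delicate point --- really bookkeeping rather than a genuine obstacle --- is to confirm that these structural facts hold in the graded generality of \autoref{setup1}, where $A_0$ need not be a field: that a dimension-zero Noetherian graded ring is Artinian, that its graded canonical module is then the graded injective hull of the residue field (hence has simple, so $1$-dimensional, socle), and that this canonical module is faithful. The first two are routine graded Matlis/local-duality facts, and the faithfulness is exactly \autoref{rem:gradedaoyamagoto}~(3). Alternatively, faithfulness can be sidestepped by dualizing the inclusion ``$\subseteq$'' with the exact graded Matlis functor ${}^*\Hom_A(-,E_A)$: using ${}^*\Hom_A(\omega_A,E_A)\cong A$ one obtains a surjection $A/\fkm_A A \twoheadrightarrow A/\bigl((0):_A((0):_A\fkm_A)\bigr)$, and since $(0):_A((0):_A\fkm_A)$ is a proper graded ideal containing $\fkm_A$ it equals $\fkm_A$; hence both sides equal $\kk$, the surjection is an isomorphism, and the inclusion ``$\subseteq$'' is an equality.
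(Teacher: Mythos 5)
Your proof is correct and follows essentially the same route as the paper: both arguments show $((0):_A\fkm_A)\omega_A\neq(0)$ by invoking faithfulness of $\omega_A$ in dimension zero, and then identify it with the full socle $(0):_{\omega_A}\fkm_A$ because the latter is one-dimensional over $\kk$ (the paper cites the type-one property of $\omega_A$ from Bruns--Herzog, you use the identification $\omega_A\cong E_A$ --- the same fact). The extra Matlis-duality variant at the end is a fine alternative but not needed.
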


\begin{proof}
We first prove that $((0):_A \fkm_A)\omega_A\ne 0$. If $((0):_A \fkm_A)\omega_A=(0)$, then we have
$$(0):_A \fkm_A \subseteq (0):_{A}\omega_A=\Ann_A(\omega_A).$$
By noting that $\dim(A)=0$ and thus $A$ is Cohen-Macaulay, $\omega_A$ is faithful, that is, $\Ann_A(\omega_A)=(0)$. It follows that $(0):_A \fkm_A=(0)$, which is a contradiction since $\dim (A)=0$. Hence, $((0):_A \fkm_A)\omega_A\ne (0)$. We observe that 
$(0)\subsetneq ((0):_A \fkm_A)\omega_A \subseteq (0):_{\omega_A}\fkm_A$.
On the other hand, we note that $(0):_{\omega_A}\fkm_A \cong {}^*\Hom_A(A/\fkm_A, \omega_A)\cong A/\fkm_A$ (\cite[Definition 3.6.8]{bruns1998cohen}).
It follows that $((0):_A \omega_A)\omega_A$ is a non-zero $A/\mm_A$-submodule of $(0):_{\omega_A} \mm_A \cong A/\mm_A$. Therefore, we obtain  $((0):_A \fkm_A)\omega_A=(0):_{\omega_A}\fkm_A$. 
\end{proof}

\begin{proposition}\label{thm:main0}
Suppose that $T$ is a field. Set $d=\dim(R)$ and 
\[
\omega_{A, B}:=\begin{cases}
    \omega_{A} \oplus \omega_{B} & \text{if } \dim(A)=\dim(B) \\
    \omega_{A} & \text{if } \dim(A)>\dim(B) \\    
    \omega_{B} & \text{if } \dim(A)<\dim(B).
\end{cases}
\]
If $d \ne 1$, then $\tr_R(\omega_R) = \tr_R(\omega_{A, B})$.
\end{proposition}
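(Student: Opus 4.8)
The plan is to reduce the claim $\tr_R(\omega_R) = \tr_R(\omega_{A,B})$ to \autoref{lem:important1}, which identifies $\omega_R$ with $\omega_{A,B}$ up to a kernel/cokernel supported on $T$, and then kill the discrepancy using \autoref{tracelem}. We split into the cases $d = 0$ and $d \geq 2$ (the case $d = 1$ being excluded by hypothesis).

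First consider $d \geq 2$. By \autoref{lem:important1}~(3) we have a graded isomorphism $\omega_R \cong \omega_{A,B}$ of $R$-modules, where the $R$-module structure on $\omega_{A,B}$ is the one obtained via the projections $\pi_1, \pi_2$ (note $\omega_A$ is an $A$-module, hence an $R$-module via $\pi_1$, and similarly for $\omega_B$). Since the trace ideal depends only on the $R$-module isomorphism class, $\tr_R(\omega_R) = \tr_R(\omega_{A,B})$ is immediate in this case. So the only real content is $d = 0$.

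Now suppose $d = 0$, so $\dim(A) = \dim(B) = 0$ and $\omega_{A,B} = \omega_A \oplus \omega_B$. By \autoref{lem:important1}~(1) there is a graded exact sequence $0 \to \omega_T \to \omega_A \oplus \omega_B \to \omega_R \to 0$ of $R$-modules; write $N$ for the image of $\omega_T$ in $\omega_A \oplus \omega_B$, so $\omega_R \cong (\omega_A \oplus \omega_B)/N$. Since $T$ is a field, $\omega_T \cong T = \kk$ is a one-dimensional $\kk$-vector space annihilated by $\MM_R$, so $N \subseteq (0):_{\omega_A \oplus \omega_B} \MM_R$. To apply \autoref{tracelem} to $M = \omega_A \oplus \omega_B$ and this submodule $N$, I must check two things: that $M$ has no free $R$-summand (equivalently $\tr_R(M) \subseteq \MM_R$), and that $N \subseteq ((0):_R \MM_R)M$. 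The first follows from \autoref{lem:AhOK}~(2): since $A_0 = B_0 = T = \kk$... wait — here $A_0 = T$, so I cannot directly invoke the hypothesis "$B \neq B_0$" of that lemma. Instead, observe directly: $A$ and $B$ are graded with $\dim = 0$, hence Artinian; if $\omega_A$ (as an $R$-module via $\pi_1$) had a free $R$-summand $R \cong \omega_R$-... more cleanly, $\omega_A \oplus \omega_B$ is annihilated by $\MM_A R \cap$ ... actually the cleanest route is that any $R$-summand isomorphic to $R$ would be a faithful $R$-module, but $\omega_A$ is annihilated by $\mm_B R$ (which is nonzero when $B \neq \kk$) and $\omega_B$ by $\mm_A R$; under \autoref{setup:domainor}-type conventions $A_0 \neq \kk$ or $B_0 \neq \kk$ is not assumed in this Proposition, so I need a self-contained argument. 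The safe argument: an $R$-free summand of $\omega_A$ would force $\omega_A$ to contain a copy of $R$ as $R$-module, but $\dim_\kk R = \dim_\kk A + \dim_\kk B - 1 > \dim_\kk A = \dim_\kk \omega_A$ when $B \neq \kk$, a contradiction; when $B = \kk$ then $R = A$ and the statement is trivial. So WLOG $B \neq \kk$ and $A \neq \kk$, and $\omega_A \oplus \omega_B$ has no $R$-free summand, giving $\tr_R(\omega_A \oplus \omega_B) \subseteq \MM_R$.

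For the second condition, I need $N \subseteq ((0):_R \MM_R)(\omega_A \oplus \omega_B)$. By \autoref{rem:ann}, $(0):_R \MM_R = ((0):_A \mm_A)R \oplus ((0):_B \mm_B)R$, and by \autoref{lem37} (applied to $A$ and to $B$, both of dimension $0$), $((0):_A \mm_A)\omega_A = (0):_{\omega_A} \mm_A$ and similarly for $B$; hence $((0):_R \MM_R)(\omega_A \oplus \omega_B) = ((0):_{\omega_A}\mm_A) \oplus ((0):_{\omega_B}\mm_B) = (0):_{\omega_A \oplus \omega_B} \MM_R$, which contains $N$. Therefore \autoref{tracelem} applies and gives $\tr_R(\omega_A \oplus \omega_B) = \tr_R((\omega_A \oplus \omega_B)/N) = \tr_R(\omega_R)$, i.e. $\tr_R(\omega_R) = \tr_R(\omega_{A,B})$, completing the case $d = 0$.

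The main obstacle is the case $d = 0$: verifying that \autoref{tracelem} is applicable, specifically pinning down $(0):_R \MM_R$ acting on $\omega_A \oplus \omega_B$ via \autoref{rem:ann} and \autoref{lem37}, and handling the "no free summand" hypothesis carefully given that the Proposition does not assume $A_0 \neq \kk$ or $B_0 \neq \kk$ (so one must either dispatch the degenerate cases $A = \kk$ or $B = \kk$ directly, where $R = B$ or $R = A$ and the claim is a tautology, or give the dimension-count argument above). The case $d \geq 2$ is essentially immediate from \autoref{lem:important1}~(3).
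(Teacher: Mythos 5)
Your proposal is correct and follows essentially the same route as the paper: $d\ge 2$ is immediate from \autoref{lem:important1}~(3), and for $d=0$ you use the exact sequence of \autoref{lem:important1}~(1), identify $N=\phi(\omega_T)$ inside $(0):_{\omega_A\oplus\omega_B}\MM_R=((0):_R\MM_R)(\omega_A\oplus\omega_B)$ via \autoref{lem37} and \autoref{rem:ann}, and conclude with \autoref{tracelem}. Your extra care in verifying the hypothesis $\tr_R(\omega_A\oplus\omega_B)\subseteq\MM_R$ of \autoref{tracelem} (via \autoref{lem:AhOK}~(2) or the length count, after dispatching the degenerate case $A=\kk$ or $B=\kk$ using \autoref{lem:thanks}~(3)) is a point the paper's own proof leaves implicit, and is welcome.
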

\begin{proof}
If $d\ge 2$, the assertion is clear since $\omega_R \cong \omega_{A, B}$ by \autoref{lem:important1} (3). Suppose that $d=0$. We have that $\omega_{A,B}=\omega_{A} \oplus \omega_{B}$ since $d=\dim (A)=\dim (B)=0$ (see \autoref{rem:12}). We note that $\omega_T\cong T\cong R/\fkm_R$ since $T$ is a field. Thus, we observe that 
\[0\rightarrow R/\fkm_R \xrightarrow{\phi} \omega_{A} \oplus \omega_{B} \rightarrow \omega_R
\rightarrow 0
\]
by \autoref{lem:important1} (1). Let $N:=\phi(R/\fkm_R)$. By the definition of $N$, $N\subseteq (0):_{\omega_A\oplus \omega_B}\fkm_R$. To apply \autoref{tracelem}, we prove that $(0):_{\omega_A\oplus \omega_B}\fkm_R = ((0):_R \fkm_R)(\omega_{A} \oplus \omega_{B})$. Indeed, let $(x,y) \in \omega_A\oplus \omega_B$. Then, noting that $\mm_R=\mm_A R \oplus \mm_B R$ by \autoref{rem32}, 
\begin{align*}
(x,y)\in (0):_{\omega_A\oplus \omega_B}\fkm_R \iff & \fkm_R(x,y)=0 \iff \fkm_A x=0 \text{\quad and \quad} \fkm_B y=0 \\
\iff & x\in (0):_{\omega_A}\fkm_A \text{\quad and \quad} y\in (0):_{\omega_B}\fkm_B.
\end{align*}
By \autoref{lem37}, the above conditions are equivalent to saying that $x\in ((0):_A \fkm_A)\omega_A$ and $y\in ((0):_B \fkm_B)\omega_B$. Therefore, by \autoref{rem:ann}, we get that 
\begin{align*}
(0):_{\omega_A\oplus \omega_B}\fkm_R =& ((0):_A \fkm_A)\omega_A  \oplus ((0):_B \fkm_B)\omega_B \\
=& [((0):_A \fkm_A)R \oplus ((0):_B \fkm_B)R](\omega_A\oplus \omega_B)\\
=& ((0):_R\mm_R)(\omega_A\oplus \omega_B). 
\end{align*}
Hence, we get that $N\subseteq (0):_{\omega_A\oplus \omega_B}\fkm_R = ((0):_R \fkm_R)(\omega_{A} \oplus \omega_{B})$. Hence, applying \autoref{tracelem} with $M=\omega_{A} \oplus \omega_{B}$, we have 
\[
\tr_R(\omega_{A} \oplus \omega_{B}) = \tr_R([\omega_{A} \oplus \omega_{B}]/N) = \tr_R(\omega_R). 
\]
\end{proof}

\autoref{thm:main0} fails to hold when $\dim (R)=1$, see \autoref{rem:OK...}~(2).

For the sake of simplifying the statement of the main result, we introduce a convenient notation.

\begin{definition}\label{def:dagertrace}
For a Noetherian positively graded ring $(S,\MM_S)$
and a finitely generated graded $S$-module $M$,
we set
\[
\tr_S^\dagger(M) :=
\begin{cases}
\tr_S(M) & \text{if\;} \tr_S(M) \subsetneq S, \\
\mm_S & \text{if \;} \tr_S(M)=S.
\end{cases}
\]
\end{definition}

\begin{remark}\label{def:daggertukaimasita}
For a Noetherian positively graded ring $(S,\MM_S)$ possessing the graded canonical module $\omega_S$,
we have
\[
\tr_S^\dagger(\omega_S) =
\begin{cases}
\tr_S(\omega_S) & \text{if } S \text{ is not quasi-Gorenstein}, \\
\mm_S & \text{if } S \text{ is quasi-Gorenstein}
\end{cases}
\]
by \autoref{def:dagertrace} and \autoref{rem:gradedaoyamagoto}~(4).
Note that
$\tr_S^\dagger(\omega_S)=\mm_S$ if and only if
$\tr_S(\omega_S) \supseteq \mm_S$.
\end{remark}


With the above notaion, one of the main results in this paper is stated as follows.

\begin{theorem}\label{aaa}
Assume that 
$T$ is a field.
Moreover, assume
$A \neq A_0$ and
$B \neq B_0$.
If $\dim(R)\ne 1$,
then 
\[
\tr_R(\omega_R) = \begin{cases}
    \tr_A^\dagger(\omega_{A})R \oplus \tr_B^\dagger(\omega_{B})R & \text{if } \dim(A)=\dim(B) \\
    \tr_A^\dagger(\omega_{A})R \oplus ((0):_B \MM_B)R & \text{if } \dim(A)>\dim(B) \\    
    ((0):_A \MM_A)R \oplus \tr_B^\dagger(\omega_{B})R & \text{if } \dim(A)<\dim(B).
\end{cases}
\]
\end{theorem}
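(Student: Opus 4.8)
The proof will proceed by combining the reduction \autoref{thm:main0} with the computation in \autoref{lem:OK1}, handling each of the three cases according to the relation between $\dim(A)$ and $\dim(B)$. Throughout, I use $\omega_{A,B}$ as defined in \autoref{thm:main0} so that $\tr_R(\omega_R)=\tr_R(\omega_{A,B})$ since $\dim(R)\neq 1$.

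\medskip

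\textbf{Case $\dim(A)>\dim(B)$.} Here $\omega_{A,B}=\omega_A$, which we view as an $R$-module via $\pi_1:R\to A$. Since $B\neq B_0$, \autoref{lem:OK1} applies: if $\tr_A(\omega_A)\subseteq\mm_A$ (i.e. $A$ is not quasi-Gorenstein) then part (2) gives $\tr_R(\omega_R)=\tr_A(\omega_A)R\oplus((0):_B\mm_B)R=\tr_A^\dagger(\omega_A)R\oplus((0):_B\mm_B)R$; if $\tr_A(\omega_A)=A$ (i.e. $A$ is quasi-Gorenstein) then part (3) gives $\tr_R(\omega_R)=\mm_A R\oplus((0):_B\mm_B)R=\tr_A^\dagger(\omega_A)R\oplus((0):_B\mm_B)R$, using \autoref{def:daggertukaimasita}. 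The case $\dim(A)<\dim(B)$ is symmetric, with the roles of $A$ and $B$ interchanged (applying \autoref{lem:OK1} to the $B$-module $\omega_B$ via $\pi_2$, using $A\neq A_0$). Note that here I should also record, as a preliminary reduction, the fact that $\tr_R(\omega_A)$ already lies in $R$ properly — which is exactly \autoref{lem:AhOK}~(2) with $M=\omega_A$ — so that $\tr_R^\dagger$ never intervenes on the left side.

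\medskip

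\textbf{Case $\dim(A)=\dim(B)$.} Now $\omega_{A,B}=\omega_A\oplus\omega_B$, so $\tr_R(\omega_R)=\tr_R(\omega_A)+\tr_R(\omega_B)$ where each summand is viewed as an $R$-module through the respective projection. Applying the already-proved formula for $\tr_R(\omega_A)$ (which fits the ``$\dim(A)>\dim(B)$'' shape only formally — here I instead invoke \autoref{lem:OK1} directly, since both $A\neq A_0$ and $B\neq B_0$): $\tr_R(\omega_A)=\tr_A^\dagger(\omega_A)R\oplus((0):_B\mm_B)R$ and symmetrically $\tr_R(\omega_B)=((0):_A\mm_A)R\oplus\tr_B^\dagger(\omega_B)R$. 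Adding these and using $\tr_A^\dagger(\omega_A)\supseteq((0):_A\mm_A)$ — which holds because $\tr_A(\omega_A)\supseteq(0):_A\mm_A$ by \autoref{lem:thanks}~(3) when $A$ is not quasi-Gorenstein, and trivially $\mm_A\supseteq(0):_A\mm_A$ when it is — together with the direct-sum bookkeeping of \autoref{rem32}, the two $((0):\mm)$ terms are absorbed and we obtain $\tr_R(\omega_R)=\tr_A^\dagger(\omega_A)R\oplus\tr_B^\dagger(\omega_B)R$.

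\medskip

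\textbf{Main obstacle.} The genuinely delicate point is the $\dim(A)=\dim(B)$ case, specifically verifying that the cross terms $((0):_B\mm_B)R$ and $((0):_A\mm_A)R$ are swallowed by $\tr_A^\dagger(\omega_A)R$ and $\tr_B^\dagger(\omega_B)R$ respectively, and that the final sum is genuinely a direct sum in the sense of \autoref{rem32} (which requires both $\tr_A^\dagger(\omega_A)\subsetneq A$ and $\tr_B^\dagger(\omega_B)\subsetneq B$ — true since $\mm_A\subsetneq A$ and, in the non-quasi-Gorenstein case, $\tr_A(\omega_A)\subsetneq A$ by \autoref{rem:gradedaoyamagoto}~(4)). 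One must be slightly careful that applying \autoref{lem:OK1} to $\omega_A$ as an $R$-module really is legitimate — that is, that $\omega_A$ is the correct $R$-module, which is where \autoref{thm:main0}'s identification $\tr_R(\omega_R)=\tr_R(\omega_{A,B})$ does the essential work and why the hypothesis $\dim(R)\neq 1$ cannot be dropped. Everything else is a direct concatenation of the cited lemmas.
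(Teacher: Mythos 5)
Your proposal is correct and follows essentially the same route as the paper: reduce to $\tr_R(\omega_{A,B})$ via \autoref{thm:main0}, split the trace over the direct sum, and apply \autoref{lem:OK1}~(2) and (3) together with \autoref{def:daggertukaimasita}. You merely spell out the absorption of the $((0):\MM)$ cross terms (via \autoref{lem:thanks}~(3) and $A\neq A_0$, $B\neq B_0$) that the paper leaves implicit when it writes the sum as $\tr_A^\dagger(\omega_A)R\oplus\tr_B^\dagger(\omega_B)R$.
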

\begin{proof}
If $\dim(A)=\dim(B)$, by \autoref{thm:main0}, 
\[
\tr_R(\omega_R) = \tr_R(\omega_{A} \oplus \omega_{B}) = \tr_R(\omega_{A}) + \tr_R(\omega_{B}),
\]
where the second equality follows by \cite[Proposition 2.8~(ii)]{lindo2017trace}. By \autoref{lem:OK1} (2) and (3), we get $\tr_R(\omega_{A}) + \tr_R(\omega_{B}) = \tr_A^\dagger(\omega_{A})R \oplus \tr_B^\dagger(\omega_{B})R$. The other cases can be proved in a similar way.
\end{proof}

\begin{example}\label{rem:OK...}
Note that \autoref{aaa} implies that $R$ is not quasi-Gorenstein (see \autoref{thm:poyopoyo1}~(1)). On the other hand, $R$ may become quasi-Gorenstein when one of assumptions of \autoref{aaa} is removed. Hence, the assumptions of \autoref{aaa} are essential. 
\begin{itemize}
\item[\rm (1)] Suppose that $A = B = T$ and $T$ is Gorenstein. Then, $R = T \times_T T \cong T$. Hence, $\tr_R(\omega_R) = R$. Therefore, \autoref{aaa} does not hold in this case.
\item[\rm (2)] Suppose that $A \cong \kk[x]$ and $B \cong \kk[y]$. Then, $\dim (R)=\dim (A)=\dim (B)=1$. Then, $R \cong \kk[x,y]/(xy)$, and $R$ is Gorenstein; hence, $\tr_R(\omega_R) = R$. Therefore, \autoref{aaa} does not hold in this case. This example also shows that \autoref{thm:main0}, which plays a crucial role in the proof of \autoref{aaa}, already fails to hold if $\dim (R)=1$. Indeed, in this case, $\omega_{A,B}:=\omega_A \oplus \omega_B\cong A\oplus B$ and thus 
\[
\tr_R(\omega_{A,B}) = \tr_R(A\oplus B) =\tr_R(A) \oplus \tr_R(B) =\fkm_A R \oplus \fkm_B R =\fkm_R
\]
by \cite[Proposition 2.8~(ii)]{lindo2017trace} and \autoref{lem:OK1}~(3).
Hence, we obtain that $\tr_R(\omega_R) \neq \tr_R(\omega_{A, B})$.
\end{itemize}
\end{example}

\begin{fact}\label{rem:OK......}
Although \autoref{aaa} assumes $\dim(R) \neq 1$, partial results are known for the case that $R$ is local and $\dim(R) = 1$.
When both $A$ and $B$ are one-dimensional Cohen--Macaulay generically Gorenstein local rings, and neither $A$ nor $B$ is a discrete valuation ring, a result analogous to \autoref{aaa} is established in the proof of \cite[Theorem 5.2]{endo2021almost}.
\end{fact}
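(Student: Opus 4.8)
The content of \autoref{rem:OK......} to be unwound is that, under its hypotheses, one has (in the local sense)
\[
\tr_R(\omega_R)=\tr_A^\dagger(\omega_A)R\oplus\tr_B^\dagger(\omega_B)R,
\]
i.e.\ exactly the shape of the $\dim(A)=\dim(B)$ case of \autoref{aaa}; so the plan is to recall why this is what the proof of \cite[Theorem~5.2]{endo2021almost} produces and to isolate where the one‑dimensional case is genuinely harder. First I would record the ambient structure. When $A$ and $B$ are one‑dimensional Cohen--Macaulay generically Gorenstein local rings with common residue field $T=A_0=B_0$, the fiber product $R=A\times_T B$ is again a one‑dimensional Cohen--Macaulay (this is part of the analysis in \cite{endo2021almost}) generically Gorenstein (localizing $R$ at a non‑maximal prime turns the fiber product into a product of localizations of $A$ and $B$) local ring; hence by the local analogue of \autoref{rem:Canon.NZD} the module $\omega_R$ is isomorphic to a canonical ideal of $R$ containing a non‑zerodivisor, and likewise $\omega_A,\omega_B$ may be realized as canonical ideals of $A,B$. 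A feature special to this setting, not available when $\dim(R)\ge2$, is that $A$ and $B$ have positive depth, so $(0):_A\MM_A=(0):_B\MM_B=0$; this is precisely why the correction terms $((0):_A\MM_A)R$ and $((0):_B\MM_B)R$ of \autoref{aaa} disappear and the target takes the diagonal form above.

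Next I would pin down $\omega_R$ through \autoref{lem:important1}~(2): since $\dim(R)=1$ and $\omega_T\cong T$, there is an exact sequence
\[
0\longrightarrow\omega_A\oplus\omega_B\overset{\alpha}{\longrightarrow}\omega_R\longrightarrow\omega_T,
\]
and the heart of \cite[Theorem~5.2]{endo2021almost} is an explicit description of $\omega_R$ refining it: one identifies $\alpha(\omega_A\oplus\omega_B)$ with $(0:_{\omega_R}\MM_BR)\oplus(0:_{\omega_R}\MM_AR)$, a submodule of $\omega_R$ of colength one, so $\omega_R=\alpha(\omega_A\oplus\omega_B)+R\xi$ for a single element $\xi$ with $\MM_R\omega_R\subseteq\alpha(\omega_A\oplus\omega_B)$. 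Unlike the higher‑dimensional case there is no shortcut: \autoref{thm:main0} genuinely fails when $\dim(R)=1$, as \autoref{rem:OK...}~(2) shows, and in the present range $\alpha$ is never an isomorphism (for instance $\Hom_R(\omega_A\oplus\omega_B,\omega_A\oplus\omega_B)\cong A\oplus B\neq R\cong\Hom_R(\omega_R,\omega_R)$). The role of the hypothesis that neither $A$ nor $B$ is a discrete valuation ring is that it forces $R$ to be non‑Gorenstein, hence $\omega_R$ has no free $R$‑summand and $\tr_R(\omega_R)\subseteq\MM_R$ by \autoref{tracelem}; if instead, say, $A$ is a discrete valuation ring then $R$ may be Gorenstein — e.g.\ $R\cong T[[x,y]]/(xy)$ with $\omega_R\cong R$ and $\tr_R(\omega_R)=R$ — and the formula fails, which is exactly why that case is excluded (compare \autoref{rem:OK...}~(2)).

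With this description in hand the trace is computed as follows. For the inclusion $\subseteq$: any graded (equivalently, by the usual argument, any) homomorphism $\varphi\in{}^*\Hom_R(\omega_R,R)$ restricts along $\alpha$ to a homomorphism $\omega_A\oplus\omega_B\to R$, so $\varphi(\alpha(\omega_A\oplus\omega_B))\subseteq\tr_R(\omega_A\oplus\omega_B)=\tr_R(\omega_A)+\tr_R(\omega_B)$ by \cite[Proposition~2.8~(ii)]{lindo2017trace}; by the local analogues of \autoref{lem:OK1}~(2),(3) applied to $\omega_A$ over $R$ via $\pi_1$ and symmetrically to $\omega_B$ via $\pi_2$, together with $(0):_A\MM_A=(0):_B\MM_B=0$, this equals $\mathfrak c:=\tr_A^\dagger(\omega_A)R\oplus\tr_B^\dagger(\omega_B)R\subseteq\MM_R$. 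Since $\omega_R=\alpha(\omega_A\oplus\omega_B)+R\xi$ with $\MM_R\xi\subseteq\alpha(\omega_A\oplus\omega_B)$, the explicit form of $\xi$ (together with $\varphi(\omega_R)\subseteq\MM_R$, valid because $\omega_R$ has no free summand) is used to force $\varphi(\xi)\in\mathfrak c$, whence $\varphi(\omega_R)\subseteq\mathfrak c$. For the reverse inclusion one uses the standard description of the canonical trace of a one‑dimensional Cohen--Macaulay generically Gorenstein ring, $\tr_R(\omega_R)=\omega_R\,(R:_{{}^*Q(R)}\omega_R)$, and the compatibility ${}^*Q(R)={}^*Q(A)\times{}^*Q(B)$ together with $\omega_R\subseteq R\subseteq A\times B$: this reduces the fractional‑ideal computation to the analogous products for $A$ and $B$, which give $\omega_A\,(A:_{{}^*Q(A)}\omega_A)R+\omega_B\,(B:_{{}^*Q(B)}\omega_B)R=\mathfrak c$ (reading $\tr_A^\dagger(\omega_A)=\MM_A$ when $A$ is Gorenstein, and similarly for $B$); finally \autoref{rem32} upgrades the sum to a direct sum, yielding the claimed formula.

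The main obstacle is precisely the failure of \autoref{thm:main0} in dimension one: one cannot reduce the computation to $\omega_{A,B}=\omega_A\oplus\omega_B$, and must instead keep track of the single extra generator of $\omega_R$ produced by the connecting homomorphism $\omega_R\to\omega_T$ — equivalently, of the canonical ideal of $R$ as a fractional ideal inside ${}^*Q(A)\times{}^*Q(B)$ — and verify that, away from the discrete valuation ring cases, this generator neither creates a free summand nor contributes any ideal beyond $\tr_A^\dagger(\omega_A)R\oplus\tr_B^\dagger(\omega_B)R$. That delicate bookkeeping is what is carried out in the proof of \cite[Theorem~5.2]{endo2021almost}.
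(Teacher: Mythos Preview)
The paper provides no proof of this Fact: it is stated purely as a citation to \cite[Theorem~5.2]{endo2021almost}, with no argument given. Your proposal therefore goes considerably beyond what the paper does, attempting to unpack the mechanism of the cited proof rather than merely recording its existence.

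As a sketch of that external argument your outline is broadly reasonable, but one step is not justified: you write that ``the explicit form of $\xi$ (together with $\varphi(\omega_R)\subseteq\MM_R$ \ldots) is used to force $\varphi(\xi)\in\mathfrak c$'', yet you never say what that explicit form is or how it yields the conclusion. Knowing only that $\MM_R\xi\subseteq\alpha(\omega_A\oplus\omega_B)$ and $\varphi(\omega_R)\subseteq\MM_R$ gives $\MM_R\varphi(\xi)\subseteq\mathfrak c$, which does not by itself place $\varphi(\xi)$ in $\mathfrak c$; this is precisely the delicate point, and your sketch defers it back to \cite{endo2021almost} rather than resolving it. Note also that the present paper does give a self-contained graded version of this result in \autoref{rem:TooRoughExplanation?}, whose proof proceeds differently: it localizes at $\mathfrak m_R$, invokes the local result of \cite{endo2021almost} as a black box, and then transports the identity back to the graded setting via \cite[Corollary~3.9]{hashimoto2023indecomposability}, rather than re-running a fractional-ideal computation.
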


In view of \autoref{rem:OK...}, we propose the following conjecture.

\begin{conjecture}\label{conjX}
When \( \dim(R) = 1 \) and neither \( A \) nor \( B \) is regular of dimension 1, \autoref{aaa} holds.
\end{conjecture}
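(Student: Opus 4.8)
The plan is to reduce the conjecture to the single identity $\tr_R(\omega_R)=\tr_R(\omega_{A,B})$, i.e. to extending \autoref{thm:main0} to the case $\dim(R)=1$ under the extra hypothesis. First one observes that the right-hand side of \autoref{aaa} equals $\tr_R(\omega_{A,B})$ in all three regimes, \emph{regardless} of $\dim(R)$: by \autoref{lem:OK1}(2),(3) and its $\pi_2$-analogue one has $\tr_R(\omega_A)=\tr_A^\dagger(\omega_A)R\oplus((0):_B\MM_B)R$ and $\tr_R(\omega_B)=((0):_A\MM_A)R\oplus\tr_B^\dagger(\omega_B)R$; adding these (absorbing $(0):_A\MM_A$ into $\tr_A^\dagger(\omega_A)$ via \autoref{lem:thanks}(3), dually for $B$) and using additivity of the trace on direct sums gives $\tr_R(\omega_{A,B})=\tr_A^\dagger(\omega_A)R\oplus\tr_B^\dagger(\omega_B)R$ when $\dim(A)=\dim(B)$, and the analogous formulas otherwise. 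So it suffices to prove $\tr_R(\omega_R)=\tr_R(\omega_{A,B})$. Now apply \autoref{lem:important1}(2): since $T=\kk$ is a field, $\omega_\kk\cong\kk$, so there is a graded exact sequence $0\to\omega_{A,B}\xrightarrow{\psi}\omega_R\to C\to 0$ with $C$ a graded $\kk$-vector space of dimension at most one. If $C=0$ we are done; the content is the case $C\cong\kk$, in which $\MM_R\omega_R\subseteq\psi(\omega_{A,B})$. Henceforth identify $\omega_{A,B}$ with its image in $\omega_R$.

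\textbf{$R$ is not quasi-Gorenstein.} Assume $C\cong\kk$. I would first show $R$ is not quasi-Gorenstein; this is precisely where the hypothesis enters, and \autoref{rem:OK...}(2) shows it cannot be dropped. If $R$ were quasi-Gorenstein then $\omega_R\cong R$; since $\omega_{A,B}$ is either $\mathfrak{a}_A$-torsion (if $\dim A>\dim B$) or a direct sum of two nonzero modules over the indecomposable $R$-module $R$ (if $\dim A=\dim B$), it cannot be free, so $C\neq0$ forces $\omega_{A,B}$ to be a colength-one graded submodule of $R$, hence $\omega_{A,B}\cong\MM_R$. Using $\MM_R=\MM_AR\oplus\MM_BR$ with $\MM_BR=\mathfrak{a}_A$, $\MM_AR=\mathfrak{a}_B$ (\autoref{rem32}) and Krull--Schmidt over the graded-local ring $R$ — sorting indecomposable summands by whether they are supported on $V(\mathfrak{a}_A)$ or $V(\mathfrak{a}_B)$, and using that $\omega_R$ (hence every summand of $\omega_{A,B}$) is $(S_2)$ and so has no finite-length summand — one deduces $\omega_{A}\cong\MM_{A}$ as $A$-modules for each component of dimension $1$ (in the non-equidimensional case the finite-length module $\MM_BR$ already gives a contradiction). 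It then remains to prove the sub-lemma: \emph{if $\dim(A)=1$ and $\omega_A\cong\MM_A$ then $A$ is regular of dimension $1$}. Here $(S_2)$ of $\omega_A$ forces $(0):_A\MM_A=0$, so $A$ is Cohen--Macaulay; then $\End_A(\omega_A)=A$ gives $(\MM_A:_{{}^*Q(A)}\MM_A)=A$, while $\MM_A^{-1}:=\Hom_A(\MM_A,A)\supsetneq A$ always (its $A$-cokernel surjects onto $\Ext_A^1(\kk,A)\neq0$); hence $\MM_A^{-1}\not\subseteq(\MM_A:\MM_A)=A$, so the ideal $\MM_A\MM_A^{-1}$ is not contained in $\MM_A$ and therefore equals $A$, so $\MM_A$ is invertible, hence principal, hence $A\cong\kk[x]$ — contradicting the hypothesis. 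Thus $R$ is not quasi-Gorenstein and $\tr_R(\omega_R)\subseteq\MM_R$ (\autoref{rem:gradedaoyamagoto}(4)).

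\textbf{The trace comparison (the crux).} It remains, still with $C\cong\kk$, to prove $\tr_R(\omega_R)=\tr_R(\omega_{A,B})$ itself. Fix $\xi_0\in\omega_R$ lifting a generator of $C$, so $\omega_R=\omega_{A,B}+R\xi_0$ and $\MM_R\xi_0\subseteq\omega_{A,B}$, and apply ${}^*\Hom_R(-,R)$ to the exact sequence. Since $\depth R\geq1$ one has ${}^*\Hom_R(\omega_R,R)\hookrightarrow{}^*\Hom_R(\omega_{A,B},R)$, with image the kernel of the connecting map $\delta$ into $\Ext_R^1(\kk,R)$; for such a $\varphi$, $\varphi(\omega_R)=\varphi(\omega_{A,B})+R\varphi(\xi_0)$ with $\varphi(\omega_{A,B})\subseteq\tr_R(\omega_{A,B})$, $\MM_R\varphi(\xi_0)\subseteq\tr_R(\omega_{A,B})$, and $\varphi(\xi_0)\in\tr_R(\omega_R)\subseteq\MM_R$. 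The inclusion "$\subseteq$" thus amounts to showing $\varphi(\xi_0)\in\tr_R(\omega_{A,B})$; the inclusion "$\supseteq$" amounts to showing $\theta(\omega_{A,B})\subseteq\tr_R(\omega_R)$ also for $\theta$ with $\delta(\theta)\neq0$, for which it suffices (via $\omega_{A,B}=\omega_A\oplus\omega_B$ and \autoref{lem:OK1}, every homomorphism out of $\omega_A$ factoring through $\MM_AR\cong\MM_A$) to see $\tr_R(\omega_A),\tr_R(\omega_B)\subseteq\tr_R(\omega_R)$. In the sub-case where $A$ and $B$ are both Cohen--Macaulay of dimension $1$ — so $R$ is Cohen--Macaulay of dimension $1$ — I would instead reduce to and extend the argument behind \cite[Theorem~5.2]{endo2021almost} (cf. \autoref{rem:OK......}), the hypothesis "not regular of dimension $1$" playing the role of "not a discrete valuation ring" there.

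\textbf{Main obstacle.} The hard part is exactly this last comparison when $C\cong\kk$: unlike in dimension $\geq2$, $\omega_R$ genuinely acquires one extra minimal generator over $\omega_{A,B}$, and one must show this generator is invisible to $\tr_R$ — that it neither enlarges $\tr_R$ nor destroys an existing homomorphism $\omega_{A,B}\to R$. The naive colon estimate $\varphi(\xi_0)\in(\tr_R(\omega_{A,B}):_R\MM_R)$ is not enough, since this colon is typically strictly larger than $\tr_R(\omega_{A,B})$ (already $\dim(R/\tr_R(\omega_R))=0$ when $R$ is Cohen--Macaulay generically Gorenstein); closing this gap requires the explicit structure of $\omega_R$ as a fiber-product canonical module (\autoref{lem:important1}) together with the $\mathfrak{a}_A$-/$\mathfrak{a}_B$-torsion splitting, and the extra bookkeeping for rings with embedded primes or nonzero socle is handled by \autoref{tracelem} and \autoref{rem:ann}. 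I expect the cleanest route is to first settle the equidimensional Cohen--Macaulay case along the lines of \cite{endo2021almost}, then strip off the non-Cohen--Macaulay contributions.
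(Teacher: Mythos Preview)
The statement you are attempting to prove is \emph{Conjecture~3.13} in the paper: it is posed as an open problem, not a theorem, and the paper gives no proof of it. There is therefore nothing in the paper to compare your argument against; the authors only record the partial result of \cite[Theorem~5.2]{endo2021almost} (see \autoref{rem:OK......}) and the obstruction \autoref{rem:OK...}(2).

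Your proposal is not a proof either, and you say so yourself. The reduction to $\tr_R(\omega_R)=\tr_R(\omega_{A,B})$ is correct (this is exactly how the paper proves \autoref{aaa} from \autoref{thm:main0}), and your argument that $R$ is not quasi-Gorenstein under the extra hypothesis is a plausible outline, though the Krull--Schmidt step deserves more care: you need to rule out $\kk$-summands of $\omega_{A,B}$, and your appeal to ``$(S_2)$'' does not do this, since a submodule of an $(S_2)$ module need not be $(S_2)$, and in the non-equidimensional case $\omega_B$ itself has finite length. More seriously, your claim ``since $\depth R\ge 1$'' in the trace-comparison step is unjustified in general: the depth lemma applied to $0\to R\to A\oplus B\to\kk\to 0$ only gives $\depth R\ge\min(\depth A,\depth B,1)$, which is $0$ as soon as one of $A,B$ has depth $0$ (e.g.\ $A=\kk[x,y]/(x^2,xy)$). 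Finally, you explicitly flag the ``main obstacle'' --- showing that the extra generator $\xi_0$ is invisible to the trace --- as unresolved, and defer to an unspecified extension of the argument in \cite{endo2021almost}. That is precisely the content of the conjecture; until that step is carried out, what you have is a strategy, not a proof.
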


In the remainder of this section,  
we apply \autoref{aaa} to characterize situations where the canonical trace of the fiber product $R$ contains $\mathfrak{m}_R$,  as well as those where it is an $\mathfrak{m}_R$-primary ideal.

\begin{lemma}\label{lem:minimalmultiplicity}
Assume  $T$ is a field.
Suppose that
$\dim(A)>\dim(B)$
(resp. $\dim(A)<\dim(B)$).
If $\tr_R(\omega_R) \supseteq \MM_R$,
then
$\mm_B^2=(0)$
(resp. $\mm_A^2=(0)$).
\begin{proof}
Consider the case where
$\dim(A) > \dim(B)$.
If $B$ is a field,  
then $\mathfrak{m}_B = (0)$ and hence $\mathfrak{m}_B^2 = (0)$.  
Therefore, we may assume that $B$ is not a field.
Suppose that $\tr_R(\omega_R) \supseteq \MM_R$. Then, by \autoref{aaa}, we have
$\tr_R(\omega_R) = \tr_A^\dagger(\omega_A)R \oplus ((0) :_B \MM_B)R \supseteq \MM_A R \oplus \MM_B R$,
and thus
$((0) :_B \MM_B)R \supseteq \MM_B R$.
By \autoref{lem:extension}~(1), we deduce that
$(0) :_B \MM_B \supseteq \MM_B$.
Since $B$ is not a field, we conclude that $(0) :_B \MM_B=\MM_B$, and hence $\MM_B^2 = (0)$. The case where $\dim(A) < \dim(B)$ can be treated similarly.
\end{proof}
\end{lemma}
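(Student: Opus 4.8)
The plan is to invoke \autoref{aaa} to describe $\tr_R(\omega_R)$ explicitly as an internal direct sum $\tr_A^\dagger(\omega_A)R\oplus((0):_B\MM_B)R$, and then to extract the required nilpotency from the summand supported on $B$.

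First I would dispose of the degenerate case. If $B=B_0$ is a field, then $\MM_B=(0)$, so $\MM_B^2=(0)$ and there is nothing to prove. Hence I may assume $B\neq B_0$; then $\MM_B\neq(0)$, so $1\notin(0):_B\MM_B$, which means $(0):_B\MM_B$ is a proper graded ideal of $B$ and is therefore contained in the unique graded maximal ideal $\MM_B$.

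Next, since $\dim(A)>\dim(B)$ we are in the middle branch of \autoref{aaa} (which applies because $\dim(R)=\max\{\dim(A),\dim(B)\}=\dim(A)\neq 1$), so $\tr_R(\omega_R)=\tr_A^\dagger(\omega_A)R\oplus((0):_B\MM_B)R$. By \autoref{rem32} we have $\MM_R=\MM_AR\oplus\MM_BR$ with $\MM_AR\cap\MM_BR=(0)$, and by \autoref{def:dagertrace} we have $\tr_A^\dagger(\omega_A)\subseteq\MM_A$ (it equals either a proper trace ideal, hence lies in $\MM_A$, or $\MM_A$ itself), while from the first step $(0):_B\MM_B\subseteq\MM_B$. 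The hypothesis $\tr_R(\omega_R)\supseteq\MM_R$ then forces $\MM_BR\subseteq((0):_B\MM_B)R$: indeed, writing $z\in\MM_BR$ as $z=x+y$ with $x\in\tr_A^\dagger(\omega_A)R\subseteq\MM_AR$ and $y\in((0):_B\MM_B)R\subseteq\MM_BR$, one gets $x=z-y\in\MM_BR$, so $x\in\MM_AR\cap\MM_BR=(0)$ and hence $z=y\in((0):_B\MM_B)R$. Contracting back to $B$ via \autoref{lem:extension}~(1) yields $\MM_B\subseteq(0):_B\MM_B$; combined with the reverse inclusion from the first step this gives $(0):_B\MM_B=\MM_B$, i.e. $\MM_B\cdot\MM_B=(0)$. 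The case $\dim(A)<\dim(B)$ is symmetric, using the third branch of \autoref{aaa} with the roles of $A$ and $B$ interchanged.

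There is no serious obstacle here: the only point requiring a little care is cleanly separating the two summands of $\tr_R(\omega_R)$, which is handled by the orthogonality in \autoref{rem32} and the contraction property in \autoref{lem:extension}~(1); everything else is the definition of $\tr^\dagger$ and bookkeeping. One should bear in mind, though, that the appeal to \autoref{aaa} tacitly uses $\dim(R)\neq 1$, which holds in the settings where this lemma is applied.
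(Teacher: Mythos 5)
Your proposal is correct and follows essentially the same route as the paper's proof: handle the case where $B$ is a field separately, invoke \autoref{aaa} to get the direct-sum description of $\tr_R(\omega_R)$, deduce $\MM_B R \subseteq ((0):_B\MM_B)R$, and contract to $B$ via \autoref{lem:extension}~(1). Your explicit use of the orthogonality $\MM_A R\cap\MM_B R=(0)$ just spells out the step the paper leaves implicit, and your closing caveat about $\dim(R)\neq 1$ correctly identifies a hypothesis that the paper's own proof also uses tacitly.
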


\begin{theorem}\label{thm:poyopoyo1}
Assume \( T \) is a field. 
Furthermore, suppose that \( A \ne A_0 \) and \( B \ne B_0 \). 
If \( d=\dim(R) \ne 1 \), the following statements hold:
\begin{itemize}
\item[\rm (1)] We have $\tr_R(\omega_R) \neq R$,
that is,
$R$ never becomes quasi-Gorenstein;
\item[\rm (2)]
The following conditions are equivalent;
\begin{enumerate}
  \item[\rm (a)] \(\operatorname{tr}_R(\omega_R) = \mathfrak{m}_R \).
\item[\rm (b)] One of the following conditions holds:
\begin{itemize}
\item[\rm (i)]
\( \dim(A) = \dim(B) \),
\( \operatorname{tr}_A(\omega_A) \supseteq \mathfrak{m}_A \) and \( \operatorname{tr}_B(\omega_B) \supseteq \mathfrak{m}_B \);
\item[\rm (ii)] \( \dim(A) > \dim(B)=0 \),
    \( \operatorname{tr}_A(\omega_A) \supseteq \mathfrak{m}_A \) and \( \mathfrak{m}_B^2 = 0 \);
    \item[\rm (iii)] \( \dim(B) > \dim(A)=0 \),
    \( \operatorname{tr}_B(\omega_B) \supseteq \mathfrak{m}_B \) and \( \mathfrak{m}_A^2 = 0 \).
\end{itemize}
\end{enumerate}
\item[\rm (3)]
The following conditions are equivalent;
\begin{enumerate}
\item[\rm (a)] \(\operatorname{tr}_R(\omega_R)\) is \( \mathfrak{m}_R\)-primary.
\item[\rm (b)] One of the following conditions holds:
\begin{itemize}
\item[\rm (i)]
\( \dim(A) = \dim(B) \),
\( \sqrt{\operatorname{tr}_A(\omega_A)}_A \supseteq \mathfrak{m}_A \) and \( \sqrt{\operatorname{tr}_B(\omega_B)}_B \supseteq \mathfrak{m}_B \);
\item[\rm (ii)] \( \dim(A) > \dim(B)=0 \), \( \sqrt{\operatorname{tr}_A(\omega_A)}_A \supseteq \mathfrak{m}_A \);
    \item[\rm (iii)] \( \dim(B) > \dim(A)=0 \),
    \( \sqrt{\operatorname{tr}_B(\omega_B)}_B \supseteq \mathfrak{m}_B \).
\end{itemize}
\end{enumerate}
\end{itemize}
\end{theorem}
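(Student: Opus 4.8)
I would derive the whole statement from the explicit formula of \autoref{aaa} (whose hypothesis $\dim(R)\neq 1$ is part of the present hypotheses), combined with \autoref{rem32}, \autoref{lem:extension} and \autoref{lem:minimalmultiplicity}. The first thing to note is that in each of the three cases of \autoref{aaa} the ideal $\tr_R(\omega_R)$ has the shape $XR\oplus YR$ with $X\subsetneq A$ and $Y\subsetneq B$ graded ideals: here $X\in\{\tr_A^\dagger(\omega_A),(0):_A\mathfrak{m}_A\}$ and $Y\in\{\tr_B^\dagger(\omega_B),(0):_B\mathfrak{m}_B\}$, and since $A,B$ are graded-local with $\mathfrak{m}_A,\mathfrak{m}_B\neq 0$ (as $A\neq A_0$, $B\neq B_0$) we get $X\subseteq\mathfrak{m}_A$ and $Y\subseteq\mathfrak{m}_B$. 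Part (1) then falls out immediately: $\tr_R(\omega_R)\subseteq\mathfrak{m}_A R\oplus\mathfrak{m}_B R=\mathfrak{m}_R\subsetneq R$ by \autoref{rem32}, so $\tr_R(\omega_R)\neq R$ and $R$ is not quasi-Gorenstein by \autoref{rem:gradedaoyamagoto}~(4).

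\textbf{Part (2).} I would first record the elementary fact that equality of two direct sums of the above shape is componentwise: if $X_1R\oplus Y_1R=X_2R\oplus Y_2R$ with all four ideals proper, then intersecting with $A$ and using $IR\cap A=I$ from \autoref{lem:extension}~(1) together with \autoref{rem32} forces $X_1=X_2$, and symmetrically $Y_1=Y_2$. Substituting the formula of \autoref{aaa} and recalling from \autoref{def:daggertukaimasita} that $\tr_S^\dagger(\omega_S)=\mathfrak{m}_S$ iff $\tr_S(\omega_S)\supseteq\mathfrak{m}_S$, I would argue: when $\dim(A)=\dim(B)$, the equality $\tr_R(\omega_R)=\mathfrak{m}_R$ is equivalent to $\tr_A^\dagger(\omega_A)=\mathfrak{m}_A$ and $\tr_B^\dagger(\omega_B)=\mathfrak{m}_B$, i.e. to (b)(i); when $\dim(A)>\dim(B)$, if $\tr_R(\omega_R)=\mathfrak{m}_R$ then \autoref{lem:minimalmultiplicity} gives $\mathfrak{m}_B^2=0$, hence $\dim(B)=0$, and componentwise comparison gives $\tr_A^\dagger(\omega_A)=\mathfrak{m}_A$, which is (b)(ii), while conversely (b)(ii) yields $\tr_A^\dagger(\omega_A)=\mathfrak{m}_A$ and $(0):_B\mathfrak{m}_B=\mathfrak{m}_B$ (the latter because $\mathfrak{m}_B^2=0$ with $\mathfrak{m}_B\neq 0$ maximal), so \autoref{aaa} gives $\tr_R(\omega_R)=\mathfrak{m}_R$; the case $\dim(A)<\dim(B)$ is symmetric.

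\textbf{Part (3).} Since $\tr_R(\omega_R)$ is proper by (1) and $\mathfrak{m}_R$ is maximal, I would use that ``$\tr_R(\omega_R)$ is $\mathfrak{m}_R$-primary'' $\Leftrightarrow$ ``$\sqrt{\tr_R(\omega_R)}_R=\mathfrak{m}_R$''. Applying $\sqrt{\,\cdot\,}_R$ to the formula of \autoref{aaa} via \autoref{lem:extension}~(2)--(3) computes this radical componentwise, after which two small checks finish the job. The first is that $\sqrt{\tr_A^\dagger(\omega_A)}_A=\mathfrak{m}_A$ iff $\sqrt{\tr_A(\omega_A)}_A\supseteq\mathfrak{m}_A$ (verify the quasi-Gorenstein and the non-quasi-Gorenstein cases separately, using $\tr_A(\omega_A)\subseteq\mathfrak{m}_A$ in the latter). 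The second is that, for a graded-local $B$ with $B\neq B_0$, one has $\sqrt{(0):_B\mathfrak{m}_B}_B=\mathfrak{m}_B$ iff $\dim(B)=0$: if $\dim(B)=0$ then $\mathfrak{m}_B^N=0$ for some $N$, so $x^{N-1}\mathfrak{m}_B=0$ for every $x\in\mathfrak{m}_B$, giving $\mathfrak{m}_B\subseteq\sqrt{(0):_B\mathfrak{m}_B}_B\subseteq\mathfrak{m}_B$; conversely if $\dim(B)\geq 1$, choose a minimal prime $\mathfrak{p}\neq\mathfrak{m}_B$ and $x\in\mathfrak{m}_B\setminus\mathfrak{p}$, and note that $\mathfrak{m}_B\cdot((0):_B\mathfrak{m}_B)=0\subseteq\mathfrak{p}$ with $\mathfrak{m}_B\not\subseteq\mathfrak{p}$ forces $(0):_B\mathfrak{m}_B\subseteq\mathfrak{p}$, so no power of $x$ lies in $(0):_B\mathfrak{m}_B$ and $x\notin\sqrt{(0):_B\mathfrak{m}_B}_B$. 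Plugging these in, the equation $\sqrt{\tr_R(\omega_R)}_R=\mathfrak{m}_A R\oplus\mathfrak{m}_B R$ unwinds in the three dimension regimes to (b)(i), (b)(ii), (b)(iii) respectively; in particular, in the unequal-dimension regime the second summand $\sqrt{(0):_B\mathfrak{m}_B}_B R$ equals $\mathfrak{m}_B R$ exactly when $\dim(B)=0$, which is why (b)(ii) and (b)(iii) impose $\dim(B)=0$, resp. $\dim(A)=0$, with no condition on $\mathfrak{m}_B^2$, resp. $\mathfrak{m}_A^2$.

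\textbf{Expected obstacle.} Most of this is bookkeeping with \autoref{aaa}, \autoref{lem:extension}, and the identification $\mathfrak{m}_A R\oplus\mathfrak{m}_B R=\mathfrak{m}_R$. The one genuinely new ingredient is the lemma $\sqrt{(0):_B\mathfrak{m}_B}_B=\mathfrak{m}_B\Leftrightarrow\dim(B)=0$, and the delicate point there is that $(0):_B\mathfrak{m}_B$ may be zero when $\depth(B)>0$, so the argument should be phrased via primes rather than via the socle; beyond that, I would just need to keep careful track of where $A\neq A_0$, $B\neq B_0$ and $\dim(R)\neq 1$ are used (the last one only through the validity of \autoref{aaa}).
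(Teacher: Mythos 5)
Your proposal is correct and follows essentially the same route as the paper: everything is read off from the formula in \autoref{aaa}, with \autoref{lem:extension} supplying the componentwise comparison and radical computation, \autoref{lem:minimalmultiplicity} giving $\mathfrak{m}_B^2=0$ in part (2), and the translation of $\sqrt{(0):_B\mathfrak{m}_B}_B=\mathfrak{m}_B$ into $\dim(B)=0$ handling part (3). Your prime-ideal argument for that last equivalence is a more carefully written version of the paper's one-line observation that $\dim(B)=0$ is equivalent to $(0):_B\mathfrak{m}_B\supseteq\mathfrak{m}_B^k$ for some $k$, but the content is the same.
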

\begin{proof}
(1):
Noting that $\tr_A^{\dagger}(\omega_A) \subseteq \mm_A$
and
$\tr_B^{\dagger}(\omega_B) \subseteq \mm_B$,
this follows
by \autoref{aaa}.

(2):
Note that $\mm_A^2=(0)$ if and only if $(0):_A\mm_A \supseteq \mm_A$.
$(b) \Rightarrow (a)$ follows from \autoref{aaa}.

We show $(a) \Rightarrow (b)$.
When $\dim(A) = \dim(B)$, it follows from \autoref{aaa} that $\tr_A(\omega_A) \supseteq \mm_A$ and $\tr_B(\omega_B) \supseteq \mm_B$.
When $\dim(A) > \dim(B)$, we have $\tr_A(\omega_A) \supseteq \mm_A$ by \autoref{aaa}.
Furthermore, we have $\mm_B^2 = (0)$ by \autoref{lem:minimalmultiplicity}.
Similarly, when $\dim(A) < \dim(B)$, it follows from \autoref{aaa} and \autoref{lem:minimalmultiplicity} that $\tr_B(\omega_B) \supseteq \mm_B$ and $\mm_A^2 = (0)$.

(3):
Note that \( \dim(A) = 0 \) (resp. $\dim(B)=0$) is equivalent to \( (0) :_A \mathfrak{m}_A \supseteq \mathfrak{m}_A^k \) (resp. \( (0) :_B \mathfrak{m}_B \supseteq \mathfrak{m}_B^k \)) for some \( k \geq 0 \). \((b) \Rightarrow (a)\) follows from \autoref{aaa}.

We prove \((a) \Rightarrow (b)\).
Suppose that \(\operatorname{tr}_R(\omega_R)\) is \(\mathfrak{m}_R\)-primary.
When \(\dim(A) = \dim(B)\), by \autoref{aaa}, we have
$\operatorname{tr}_R(\omega_R) = \operatorname{tr}_A^\dagger(\omega_A)R \oplus \operatorname{tr}_B^\dagger(\omega_B)R.$
In this case, by \autoref{lem:extension}~(3) and the assumption, we obtain
$$\sqrt{\operatorname{tr}_A^\dagger(\omega_A)}_AR \oplus \sqrt{\operatorname{tr}_B^\dagger(\omega_B)}_BR = \sqrt{\operatorname{tr}_R(\omega_R)}_R = \mathfrak{m}_R = \mathfrak{m}_A R \oplus \mathfrak{m}_B R,$$
and thus \(\sqrt{\operatorname{tr}_A^\dagger(\omega_A)}_AR = \mathfrak{m}_A R\) and \(\sqrt{\operatorname{tr}_B^\dagger(\omega_B)}_BR = \mathfrak{m}_B R\).
It follows that
$\sqrt{\operatorname{tr}_A^\dagger(\omega_A)}_A = \mathfrak{m}_A$ and
$\sqrt{\operatorname{tr}_B^\dagger(\omega_B)}_B = \mathfrak{m}_B$ by \autoref{lem:extension}~(1).
Thus, we conclude that \(\sqrt{\operatorname{tr}_A(\omega_A)}_A \supseteq \mathfrak{m}_A\) and \(\sqrt{\operatorname{tr}_B(\omega_B)}_B \supseteq \mathfrak{m}_B\).
When \(\dim(A) > \dim(B)\), by \autoref{aaa}, we have
$\operatorname{tr}_R(\omega_R) = \operatorname{tr}_A^\dagger(\omega_A)R \oplus ((0) :_B \mathfrak{m}_B)R$.
By an argument similar to the above and using \autoref{lem:extension}~(1) and (3), we obtain
$\sqrt{\operatorname{tr}_A(\omega_A)}_A \supseteq \mm_A$
and
$\sqrt{(0) :_B \mathfrak{m}_B}_B \supseteq \mm_B$.
The latter condition is equivalent to \((0) :_B \mm_B \supseteq \mm_B^k\) for some \( k \geq 0 \), that is, \(\dim(B) = 0\).
The case \(\dim(A) < \dim(B)\) can be shown in a similar manner.
\end{proof}

\begin{corollary}
If $T$ is a field, $\dim(R) \neq 1$ and $\tr_R(\omega_R) \supseteq \MM_R$,
then
$\tr_{A}(\omega_{A}) \supseteq \MM_A$
and
$\tr_{B}(\omega_{B}) \supseteq \MM_B$.
\end{corollary}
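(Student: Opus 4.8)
The plan is to reduce everything to \autoref{thm:poyopoyo1}. First I would dispose of the degenerate cases: if $A = A_0$, then $R \cong B$ (indeed $T \times_T B \cong B$), so $\MM_R = \MM_B$ while $\MM_A = (0)$; the hypothesis $\tr_R(\omega_R)\supseteq \MM_R$ then reads $\tr_B(\omega_B) \supseteq \MM_B$, and $\tr_A(\omega_A) = A \supseteq (0) = \MM_A$ holds trivially. The case $B = B_0$ is symmetric. Hence I may assume $A \neq A_0$ and $B \neq B_0$, which places me in the setting of \autoref{thm:poyopoyo1}.

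Next I would upgrade the containment to an equality. By \autoref{thm:poyopoyo1}~(1) the ideal $\tr_R(\omega_R)$ is proper, and since $\MM_R$ is the unique graded maximal ideal of $R$ (\autoref{lem:27}), every proper graded ideal of $R$ is contained in $\MM_R$; thus $\tr_R(\omega_R) \subseteq \MM_R$, and combined with the hypothesis $\tr_R(\omega_R) \supseteq \MM_R$ this forces $\tr_R(\omega_R) = \MM_R$.

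Now I would invoke the implication $(a) \Rightarrow (b)$ of \autoref{thm:poyopoyo1}~(2), which yields one of three situations. In case (i), where $\dim(A) = \dim(B)$, the conclusions $\tr_A(\omega_A) \supseteq \MM_A$ and $\tr_B(\omega_B) \supseteq \MM_B$ are handed to us directly. In case (ii), where $\dim(A) > \dim(B) = 0$, I already have $\tr_A(\omega_A) \supseteq \MM_A$; to obtain the $B$-part, note that $\MM_B^2 = (0)$ gives $\MM_B \subseteq (0):_B \MM_B$, so \autoref{lem:thanks}~(3) applied to the nonzero $B$-module $\omega_B$ yields $\tr_B(\omega_B) \supseteq (0):_B \MM_B \supseteq \MM_B$. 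Case (iii), with $\dim(B) > \dim(A) = 0$, is the mirror image, exchanging the roles of $A$ and $B$.

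I do not expect a genuine obstacle: the statement is essentially a corollary of \autoref{thm:poyopoyo1}. The only point requiring attention is that one must first pass through part~(1) of \autoref{thm:poyopoyo1} to convert the containment $\tr_R(\omega_R)\supseteq \MM_R$ into the equality $\tr_R(\omega_R) = \MM_R$ before part~(2) becomes applicable, together with the small observation in the unequal-dimension cases that $\MM_B^2 = (0)$ (resp. $\MM_A^2 = (0)$) combined with \autoref{lem:thanks}~(3) is exactly what promotes the trace to contain the full maximal ideal.
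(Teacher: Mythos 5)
Your proof is correct and follows essentially the same route as the paper: reduce to \autoref{thm:poyopoyo1}~(2) and, in the unequal-dimension cases, use \autoref{lem:thanks}~(3) together with $\MM_B^2=(0)$ (resp.\ $\MM_A^2=(0)$) to get $\tr_B(\omega_B)\supseteq (0):_B\MM_B\supseteq \MM_B$. Your extra care in handling the degenerate cases $A=A_0$, $B=B_0$ and in justifying the upgrade from $\tr_R(\omega_R)\supseteq\MM_R$ to equality (via properness from \autoref{thm:poyopoyo1}~(1) and gradedness of the trace) is sound and merely makes explicit what the paper leaves implicit.
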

\begin{proof}
Note that if $\mm_A^2 = (0)$, then we have $\tr_R(\omega_A) \subseteq (0) :_A \mm_A = \mm_A$ by \autoref{lem:thanks}~(3).  
Thus the assertion follows from \autoref{thm:poyopoyo1}~(2).
\end{proof}

\begin{remark}\label{rem:TooRoughExplanation?}
Assume that \( T \) is a field.
Further assume that \( A \neq A_0 \) and \( B \neq B_0 \).
Assume that \( d = \dim(A) = \dim(B) = 1 \), and that \( A \) and \( B \) are Cohen--Macaulay, generically Gorenstein, positively graded rings, not both of which are regular of dimension 1. Then, a statement analogous to \autoref{thm:poyopoyo1} holds.
That is, we have $\operatorname{tr}_R(\omega_R) = \operatorname{tr}_A^{\dagger}(\omega_A)R \oplus \operatorname{tr}_B^{\dagger}(\omega_B)R$.
\begin{proof}
We first show that
$R_{\mathfrak{m}_R} \cong A_{\mathfrak{m}_A} \times_T B_{\mathfrak{m}_B}$,
where the fiber product on the right-hand side is taken over the common residue field $T$ with respect to natural surjections
$f: A_{\mathfrak{m}_A} \rightarrow A_{\mathfrak{m}_A}/\mm_A A_{\mm_A}=T$ and $g: B_{\mathfrak{m}_B} \rightarrow B_{\mathfrak{m}_B}/\mm_B B_{\mm_B}=T$.
By the universal properties of fiber product and localization,
we can define a ring homomorphism
$\varphi : R_{\mathfrak{m}_R} \rightarrow A_{\mathfrak{m}_A} \times_T B_{\mathfrak{m}_B}$
by
\[
\varphi\left( \frac{(a,b)}{(s,t)} \right) = \left( \frac{a}{s}, \frac{b}{t} \right),
\]
for \((a,b) \in R = A \times_T B\), and \((s,t) \in R \setminus \mathfrak{m}_R\) (hence, $s=t\ne 0$ in T).
Then, by the definition of $\varphi$, it is straightforward to verify that \( \varphi \) is an isomorphism. Hence, 
$R_{\mathfrak{m}_R} \cong A_{\mathfrak{m}_A} \times_T B_{\mathfrak{m}_B}$.

Since \( R_{\mathfrak{m}_R} \cong A_{\mathfrak{m}_A} \times_T B_{\mathfrak{m}_B} \),
it follows from \cite[Proposition 2.8~(viii)]{lindo2017trace}
and the proof of \cite[Theorem 5.2]{endo2021almost} that
\[
\operatorname{tr}_R(\omega_R) R_{\mathfrak{m}_R}
\cong
\operatorname{tr}_{R_{\mathfrak{m}_R}}(\omega_{R_{\mathfrak{m}_R}})
=
\operatorname{tr}_{A_{\mathfrak{m}_A}}^{\dagger}(\omega_{A_{\mathfrak{m}_A}}) R_{\mathfrak{m}_R}
\oplus
\operatorname{tr}_{B_{\mathfrak{m}_B}}^{\dagger}(\omega_{B_{\mathfrak{m}_B}}) R_{\mathfrak{m}_R}.
\]

Moreover, by \autoref{lem:OK1}~(2) and (3) for the local case, we have
\[
\operatorname{tr}_{A_{\mathfrak{m}_A}}^{\dagger}(\omega_{A_{\mathfrak{m}_A}}) R_{\mathfrak{m}_R}
\oplus
\operatorname{tr}_{B_{\mathfrak{m}_B}}^{\dagger}(\omega_{B_{\mathfrak{m}_B}}) R_{\mathfrak{m}_R}
=
\operatorname{tr}_{R_{\mathfrak{m}_R}}(\omega_{A_{\mathfrak{m}_A}} \oplus \omega_{B_{\mathfrak{m}_B}}).
\]

Since
\(
\omega_{A_{\mathfrak{m}_A}} \cong \omega_A \otimes_R R_{\mathfrak{m}_R}
\)
and
\(
\omega_{B_{\mathfrak{m}_B}} \cong \omega_B \otimes_R R_{\mathfrak{m}_R}
\)
as $R_{\fkm_R}$-modules, we obtain
\[
\omega_{A_{\mathfrak{m}_A}} \oplus \omega_{B_{\mathfrak{m}_B}}
\cong
(\omega_A \oplus \omega_B) \otimes_R R_{\mathfrak{m}_R}.
\]

Consequently, by \cite[Proposition 2.8~(viii)]{lindo2017trace}, we have
\[
\operatorname{tr}_{R_{\mathfrak{m}_R}}(\omega_{A_{\mathfrak{m}_A}} \oplus \omega_{B_{\mathfrak{m}_B}})
=
\operatorname{tr}_{R_{\mathfrak{m}_R}}((\omega_A \oplus \omega_B) \otimes_R R_{\mathfrak{m}_R})
\cong
\operatorname{tr}_R(\omega_A \oplus \omega_B) R_{\mathfrak{m}_R}.
\]

Therefore, we obtain
$\operatorname{tr}_R(\omega_R) R_{\mathfrak{m}_R}
\cong
\operatorname{tr}_R(\omega_A \oplus \omega_B) R_{\mathfrak{m}_R}$.
By \cite[Corollary~3.9]{hashimoto2023indecomposability},
we have
$\operatorname{tr}_R(\omega_R) \cong \operatorname{tr}_R(\omega_A \oplus \omega_B)$
as graded \( R \)-modules.
From this, it follows that
$\operatorname{tr}_R(\omega_R) = \operatorname{tr}_R(\omega_A \oplus \omega_B)$.
Since
\(
\operatorname{tr}_R(\omega_A) = \operatorname{tr}_A^{\dagger}(\omega_A) R
\)
and
\(
\operatorname{tr}_R(\omega_B) = \operatorname{tr}_B^{\dagger}(\omega_B) R
\)
by \autoref{lem:OK1}~(2) and (3), we obtain
\[
\operatorname{tr}_R(\omega_R)
=
\operatorname{tr}_A^{\dagger}(\omega_A) R
\oplus
\operatorname{tr}_B^{\dagger}(\omega_B) R.
\]
\end{proof}
\end{remark}

In particular, the following result holds in the case where the fiber product is reduced and of dimension greater than one.

\begin{corollary}\label{cor:poyopoyo}
Assume that \( T \) is a field. 
Furthermore, suppose that \( A \ne A_0 \) and \( B \ne B_0 \).
Then, if
$R$ is reduced and $\dim(R) \neq 1$, the following hold:
\begin{itemize}
\item[\rm (1)] $\tr_R(\omega_R)=\MM_R$
if and only if
$\dim(A)=\dim(B)$,
$\tr_{A}(\omega_{A}) \supseteq \MM_{A}$
and
$\tr_{B}(\omega_{B}) \supseteq \MM_{B}$.
\item[\rm (2)]
$\sqrt{\tr_R(\omega_R)}_R=\MM_R$
if and only if
$\dim(A)=\dim(B)$,
$\sqrt{\tr_{A}(\omega_{A})}_A \supseteq \MM_{A}$
and
$\sqrt{\tr_{B}(\omega_{B})}_B \supseteq \MM_{B}$.
\end{itemize}
\begin{proof}
Since \( R \) is reduced,
\( A \) and \( B \) are both reduced by \autoref{lem:extension}~(1) and (2).
If $\dim(A) = 0$, then $A$ must be a field because $A$ is reduced.  
As $A$ is positively graded, we have $A = A_0$. This yields a contradiction.  
Therefore, $\dim(A) \ge 1$.  
Similarly, we obtain $\dim(B) \ge 1$
and hence 
the claim follows from \autoref{thm:poyopoyo1}.
\end{proof}
\end{corollary}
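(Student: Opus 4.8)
The plan is to obtain both equivalences as the non‑degenerate special case of \autoref{thm:poyopoyo1}. Recall that items (2) and (3) of that theorem already describe exactly when $\tr_R(\omega_R)=\MM_R$, respectively when $\tr_R(\omega_R)$ is $\MM_R$‑primary, and that in each case the characterizing condition~(b) splits into three alternatives~(i),~(ii),~(iii), where alternative~(ii) forces $\dim(B)=0$ and~(iii) forces $\dim(A)=0$. Thus the entire task reduces to showing that once $R$ is reduced, both $A$ and $B$ have positive dimension; then alternatives~(ii) and~(iii) are vacuous, only~(i) survives, and~(i) is precisely the asserted criterion. Note that the hypotheses $T$ a field, $A\ne A_0$, $B\ne B_0$, $\dim(R)\ne 1$ are exactly those needed to invoke \autoref{thm:poyopoyo1}.

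\textbf{Step 1: reducedness descends to the factors.} Applying \autoref{lem:extension}~(2) with $I=(0)$ gives $\sqrt{(0)}_R=\sqrt{(0)}_A R\oplus\sqrt{(0)}_B R$, so the assumption $\sqrt{(0)}_R=(0)$ forces $\sqrt{(0)}_A R=(0)$ and $\sqrt{(0)}_B R=(0)$. Intersecting with $A$ and with $B$ and using \autoref{lem:extension}~(1) then yields $\sqrt{(0)}_A=(0)$ and $\sqrt{(0)}_B=(0)$, i.e.\ $A$ and $B$ are reduced.

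\textbf{Step 2: the factors are positive‑dimensional, and conclusion.} Suppose $\dim(A)=0$. Since $A$ is positively graded with $A_0=T$ a field, every positive‑degree element of $A$ lies in the nilradical (a graded domain quotient of a zero‑dimensional ring has no positive‑degree elements); as $A$ is reduced this nilradical is $(0)$, so $A_{>0}=(0)$ and $A=A_0$, contradicting $A\ne A_0$. Hence $\dim(A)\ge 1$, and symmetrically $\dim(B)\ge 1$. Consequently alternatives~(ii) and~(iii) in \autoref{thm:poyopoyo1}~(2) and~(3) cannot occur, so in both equivalences condition~(b) collapses to its clause~(i). For \autoref{thm:poyopoyo1}~(2) this is exactly part~(1) of the corollary. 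For part~(2), I would additionally use the standard fact that in a Noetherian ring an ideal is $\MM_R$‑primary if and only if its radical equals $\MM_R$ (because $\MM_R$ is maximal); feeding this into \autoref{thm:poyopoyo1}~(3), whose condition~(b) likewise reduces to clause~(i), produces the stated criterion for $\sqrt{\tr_R(\omega_R)}_R=\MM_R$.

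\textbf{Expected obstacle.} There is no substantial obstacle: the real work was already done in \autoref{aaa} and \autoref{thm:poyopoyo1}. The only points requiring a little care are the two reduction steps — the descent of reducedness to $A$ and $B$, which hinges on the orthogonal‑sum formula $\sqrt{IR}_R=\sqrt{I}_A R\oplus\sqrt{(0)}_B R$ of \autoref{lem:extension}, and the exclusion of the zero‑dimensional cases, which relies on the incompatibility of reducedness with a positively graded ring being zero‑dimensional while $A\ne A_0$ (resp.\ $B\ne B_0$).
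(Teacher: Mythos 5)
Your proposal is correct and follows essentially the same route as the paper: deduce reducedness of $A$ and $B$ from \autoref{lem:extension}, rule out the zero-dimensional cases (which is where $A\ne A_0$, $B\ne B_0$ and reducedness enter), and then read off the surviving clause (i) of \autoref{thm:poyopoyo1}~(2) and (3). The only addition you make explicit — that $\mathfrak{m}_R$-primary is equivalent to having radical $\mathfrak{m}_R$ — is the same silent step the paper takes.
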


Finally, we describe the canonical trace of the fiber product of multiple rings.

\begin{theorem}\label{thm:poyopoyo}
Let $n \geq 2$ be an integer, and assume that $T$ is a field.  
For each $1 \leq i \leq n$, let $(A_i, \mathfrak{m}_{A_i})$ be a Noetherian positively graded ring such that the set of its degree-zero homogeneous elements, $[A_i]_0$, is equal to $T$,
and let $f_i: A_i \rightarrow A_i/(A_i)_{>0}
\cong T$
be the canonical graded surjection.
Let $R = A_1 \times_T \cdots \times_T A_n$ be the fiber product of the $A_i$ over $T$ with respect to $f_1, \dots, f_n$.
Set
$X = \{ 1 \leq i \leq n \mid \dim(A_i) = \dim(R) \}$.
Assume $\dim(R) \neq 1$.
Then the following assertions hold:
\begin{itemize}
\item[\rm (1)]
The following hold:
\begin{itemize}
\item[\rm (i)] {We have $\tr_R(\omega_{R})=\bigoplus_{i \in X} \tr_{A_i}^{\dagger}(\omega_{A_i})R$;}
\item[\rm (ii)] We have $\tr_R(\omega_R) \neq R$,
that is,
$R$ never becomes quasi-Gorenstein;
\item[\rm (iii)]
\( \tr_R(\omega_R) = \mathfrak{m}_R \)
if and only if \( \tr_{A_i}(\omega_{A_i}) \supseteq \mathfrak{m}_{A_i} \) for all \( i \in X \) and \( \mm_{A_i}^2 = 0 \)
for all
$i \notin X$.
\item[\rm (iv)]
\( \sqrt{\operatorname{tr}_R(\omega_R)}_R = \mathfrak{m}_R \) if and only if \( \sqrt{\operatorname{tr}_{A_i}(\omega_{A_i})}_{A_i} \supseteq \mathfrak{m}_{A_i} \) for all \( i \in X \) and \( \dim(A_i) = 0 \) for all \( i \notin X \);
\end{itemize}
\item[\rm (2)]
Assume $R$ is reduced.
Then the following hold:
\begin{itemize}
\item[\rm (i)] $\sqrt{\tr_R(\omega_R)}_R \supseteq \MM_R$
if and only if
$\sqrt{\tr_{A_i}(\omega_{A_i})}_{A_i} \supseteq \mm_{A_i}$ 
and
$\dim(A_i)=\dim(R)$
for all $i$.
\item[\rm (ii)]
$\tr_R(\omega_R) \supseteq \MM_R$
if and only if
$\tr_{A_i}(\omega_{A_i}) \supseteq \mm_{A_i}$
and
$\dim(A_i)=\dim(R)$
for all $i$.
\end{itemize}
\end{itemize}
\end{theorem}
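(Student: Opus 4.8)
The plan is to induct on $n$, with the two–factor statement \autoref{aaa} as both the base case and the engine of the inductive step. Throughout I assume each $A_i\ne[A_i]_0=T$, exactly as in \autoref{aaa} (a factor equal to $T$ contributes nothing to the fiber product and imposes no condition, so this costs nothing); in particular $\mathfrak{m}_{A_i}\ne0$, whence $(0):_{A_i}\mathfrak{m}_{A_i}\subseteq\mathfrak{m}_{A_i}$ and, by definition, $\tr_{A_i}^\dagger(\omega_{A_i})\subseteq\mathfrak{m}_{A_i}$. The object I would actually propagate through the induction is the explicit formula, which I call $(\ast)$:
\[
\tr_R(\omega_R)=\bigoplus_{i\in X}\tr_{A_i}^\dagger(\omega_{A_i})R\;\oplus\;\bigoplus_{i\notin X}((0):_{A_i}\mathfrak{m}_{A_i})R .
\]
Here the coordinatewise decomposition of ideals of $R$ is the obvious iterate of \autoref{rem32} and \autoref{rem:ann}: $\mathfrak{m}_R=\bigoplus_i\mathfrak{m}_{A_i}R$ and $(0):_R\mathfrak{m}_R=\bigoplus_i((0):_{A_i}\mathfrak{m}_{A_i})R$, with $IR\cap A_i=I$ for every proper graded ideal $I$ of $A_i$ by \autoref{lem:extension}(1). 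For $n=2$ the formula $(\ast)$ is precisely \autoref{aaa} after unwinding its three cases ($X=\{1,2\}$, $X=\{1\}$, $X=\{2\}$). Granting $(\ast)$, part (ii) is immediate: each summand lies in the corresponding $\mathfrak{m}_{A_i}R$, so $\tr_R(\omega_R)\subseteq\mathfrak{m}_R\subsetneq R$.

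\textbf{The inductive step, and the main obstacle.} For $n\ge3$ write $R=R'\times_T A_n$ with $R'=A_1\times_T\cdots\times_T A_{n-1}$ (a fiber product of $n-1\ge2$ rings, none equal to $T$, with $[R']_0=T$). Since $R'\ne T$ and $\dim(R)\ne1$, \autoref{aaa} applies to $R=R'\times_T A_n$, and $\dim(R')=\max\{\dim(A_i):i<n\}$ by \autoref{rem:12}. I would distinguish the position of $\dim(R')$ relative to $\dim(A_n)$. If $\dim(R')\ge\dim(A_n)$, then $\dim(R')=\dim(R)\ne1$, so the inductive hypothesis supplies both $(\ast)$ for $R'$ and, via (ii) for $R'$, the equality $\tr_{R'}^\dagger(\omega_{R'})=\tr_{R'}(\omega_{R'})$; substituting into the conclusion of \autoref{aaa} (which is $\tr_{R'}^\dagger(\omega_{R'})R\oplus((0):_{A_n}\mathfrak{m}_{A_n})R$ if $\dim(R')>\dim(A_n)$ and $\tr_{R'}^\dagger(\omega_{R'})R\oplus\tr_{A_n}^\dagger(\omega_{A_n})R$ if $\dim(R')=\dim(A_n)$), and bookkeeping that the index set of $R'$ is $X\setminus\{n\}$ (equal to $X$ in the strict case), reproduces $(\ast)$ for $R$. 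If instead $\dim(R')<\dim(A_n)$, then $\dim(R)=\dim(A_n)$ and $X=\{n\}$, and \autoref{aaa} gives $\tr_R(\omega_R)=\tr_{A_n}^\dagger(\omega_{A_n})R\oplus((0):_{R'}\mathfrak{m}_{R'})R$; rewriting $(0):_{R'}\mathfrak{m}_{R'}=\bigoplus_{i<n}((0):_{A_i}\mathfrak{m}_{A_i})R'$ by iterating \autoref{rem:ann} again recovers $(\ast)$. The delicate case is $\dim(R')=1$, where neither \autoref{aaa} nor the inductive hypothesis is available for $R'$; the key point — and, I expect, the place where care is most needed — is that this can occur \emph{only} in the last case (where $\dim(A_n)>\dim(R')=1$ is forced by $\dim(R)\ne1$), and there $R'$ enters $(\ast)$ purely through its socle ideal, so no information about $\tr_{R'}(\omega_{R'})$ is required. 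This is what makes the induction go through with no extra hypotheses on the $A_i$.

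\textbf{Deducing (iii) and (iv).} Both follow from $(\ast)$ by comparing coordinates and intersecting with each $A_i$ via \autoref{lem:extension}(1): $\tr_R(\omega_R)=\mathfrak{m}_R$ iff $\tr_{A_i}^\dagger(\omega_{A_i})=\mathfrak{m}_{A_i}$ for all $i\in X$ and $(0):_{A_i}\mathfrak{m}_{A_i}=\mathfrak{m}_{A_i}$ for all $i\notin X$; by \autoref{def:daggertukaimasita} the first is $\tr_{A_i}(\omega_{A_i})\supseteq\mathfrak{m}_{A_i}$, and the second is equivalent to $\mathfrak{m}_{A_i}^2=0$ (the inclusion $(0):_{A_i}\mathfrak{m}_{A_i}\subseteq\mathfrak{m}_{A_i}$ being automatic), which is (iii). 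Taking radicals and applying the iterate of \autoref{lem:extension}(2)--(3),
\[
\sqrt{\tr_R(\omega_R)}_R=\bigoplus_{i\in X}\sqrt{\tr_{A_i}^\dagger(\omega_{A_i})}_{A_i}R\;\oplus\;\bigoplus_{i\notin X}\sqrt{(0):_{A_i}\mathfrak{m}_{A_i}}_{A_i}R,
\]
which equals $\mathfrak{m}_R$ iff $\sqrt{\tr_{A_i}(\omega_{A_i})}_{A_i}\supseteq\mathfrak{m}_{A_i}$ for $i\in X$ and $\sqrt{(0):_{A_i}\mathfrak{m}_{A_i}}_{A_i}=\mathfrak{m}_{A_i}$ for $i\notin X$; the latter condition says $\mathfrak{m}_{A_i}$ is nilpotent, i.e.\ $\dim(A_i)=0$. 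This is (iv).

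\textbf{Deducing (2).} If $R$ is reduced then each $A_i$ is reduced by \autoref{lem:extension}(1)--(2); being positively graded with residue field $T$ and $A_i\ne T$, each $A_i$ then has positive depth, so $(0):_{A_i}\mathfrak{m}_{A_i}=0$ for every $i$ and $(\ast)$ collapses to $\tr_R(\omega_R)=\bigoplus_{i\in X}\tr_{A_i}^\dagger(\omega_{A_i})R$, whose coordinate at every $i\notin X$ vanishes. If some $\dim(A_k)<\dim(R)$, then $0\ne\mathfrak{m}_{A_k}R$ lies in neither $\tr_R(\omega_R)$ nor $\sqrt{\tr_R(\omega_R)}_R\subseteq\bigoplus_{j\in X}\mathfrak{m}_{A_j}R$, so the left-hand sides of (2)(i) and (2)(ii) fail, as do the right-hand sides. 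If $\dim(A_i)=\dim(R)$ for all $i$, i.e.\ $X=\{1,\dots,n\}$, the coordinatewise comparison (after taking radicals for (i)) reduces $\tr_R(\omega_R)\supseteq\mathfrak{m}_R$ to $\tr_{A_i}(\omega_{A_i})\supseteq\mathfrak{m}_{A_i}$ for all $i$, and $\sqrt{\tr_R(\omega_R)}_R\supseteq\mathfrak{m}_R$ to $\sqrt{\tr_{A_i}(\omega_{A_i})}_{A_i}\supseteq\mathfrak{m}_{A_i}$ for all $i$, which are the asserted right-hand sides.
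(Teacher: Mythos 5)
Your proof is correct and follows exactly the route the paper intends: induction on $n$ with \autoref{aaa} as both base case and engine, then reading off (iii), (iv) and (2) coordinatewise via \autoref{lem:extension} and \autoref{rem:ann}. Two remarks. First, the formula $(\ast)$ you propagate, with the extra summands $\bigoplus_{i\notin X}((0):_{A_i}\mathfrak{m}_{A_i})R$, is the correct general statement, and it does \emph{not} coincide with (1)(i) as literally printed: if some $A_j$ with $j\notin X$ has nonzero socle (e.g.\ $A_1=\kk[x,y]$, $A_2=\kk[z]/(z^2)$, so $\dim(R)=2$ and $X=\{1\}$), then \autoref{aaa} gives $\tr_R(\omega_R)=\mathfrak{m}_{A_1}R\oplus(z)R$, which strictly contains $\bigoplus_{i\in X}\tr_{A_i}^\dagger(\omega_{A_i})R=\mathfrak{m}_{A_1}R$. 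So you have in fact proved a corrected version of (1)(i); the printed formula holds only when those socle terms vanish (in particular in the reduced case, the only case the paper later uses). You should say this explicitly rather than silently substituting $(\ast)$ for the stated (1)(i). Second, your isolation of the one genuinely delicate point of the induction --- that the intermediate product $R'=A_1\times_T\cdots\times_T A_{n-1}$ may have $\dim(R')=1$, where neither \autoref{aaa} nor the inductive hypothesis applies to $R'$, and that this can only happen when $\dim(A_n)>\dim(R')$, so that $R'$ enters the formula solely through $(0):_{R'}\mathfrak{m}_{R'}=\bigoplus_{i<n}((0):_{A_i}\mathfrak{m}_{A_i})R'$ --- is exactly right, and is the step the paper's one-line proof glosses over. (Your standing assumption $A_i\ne T$ is likewise needed and is implicit in the paper, since \autoref{aaa} requires it; without it, parts (1)(ii) and (2) can fail as stated.)
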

\begin{proof}
By induction on $n$, the assertion can be reduced to the case $n=2$.
(1) follows from \autoref{aaa} and \autoref{thm:poyopoyo1}.
(2) follows from \autoref{cor:poyopoyo}.
\end{proof}

The following is a computational formula for the canonical traces of Stanley--Reisner rings that arise from disconnected simplicial complexes.

\begin{theorem}\label{MainTHM:B}
Let $n \ge 2$ be an integer.
For $i=1,\ldots,n$,
let $\Delta_i$ be a simplicial complex and let $A_i=\kk[\Delta_i]$.
Set $\Delta=\bigsqcup_{i=1}^n \Delta_i$ and $R=\kk[\Delta]$,
{and assume that $\Delta$ is not the discrete simplicial complex on two vertices.}
Then the following hold:
\begin{itemize}
\item[\rm (1)]
We have
$$\tr_{\kk[\Delta]}(\omega_{\kk[\Delta]})
= \bigoplus_{\substack{1 \le i \le n \\ \dim(\Delta_i) = \dim(\Delta)}}
\tr_{\kk[\Delta_i]}^{\dagger}(\omega_{\kk[\Delta_i]})\kk[\Delta];$$
\item[\rm (2)]
$\sqrt{\tr_R(\omega_{R})}_R=\MM_{R}$
if and only if
$\sqrt{\tr_{A_i}(\omega_{A_i})}_{A_i} \supseteq \MM_{A_i}$
and
$\dim(\Delta_i)=\dim(\Delta)$
for all $1 \le i \le n$;
\item[\rm (3)]
$\tr_R(\omega_{R})=\MM_{R}$
if and only if
$\tr_{A_i}(\omega_{A_i}) \supseteq \MM_{A_i}$
and
$\dim(\Delta_i)=\dim(\Delta)$
for all $1 \le i \le n$.
\end{itemize}
\end{theorem}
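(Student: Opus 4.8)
The plan is to deduce all three parts directly from \autoref{thm:poyopoyo}, applied to the fiber-product decomposition of $\kk[\Delta]$, together with two standard facts about a Stanley--Reisner ring $\kk[\Gamma]$: the dimension formula $\dim\kk[\Gamma]=\dim\Gamma+1$, and the fact that $\kk[\Gamma]$ is reduced (its defining ideal being generated by squarefree monomials, hence radical). From reducedness it follows that whenever $\Gamma$ has a non-empty vertex set---which, under the conventions in force here, is always the case---one has $\dim\kk[\Gamma]\ge 1$ and $\MM_{\kk[\Gamma]}^2\neq(0)$: if $\dim\kk[\Gamma]=0$ or $\MM_{\kk[\Gamma]}^2=0$, then the reduced ring $\kk[\Gamma]$ would be a field, hence equal to $\kk$, which is absurd.

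First I would invoke \autoref{lem:SRfiberproduct} to realize $R=\kk[\Delta]\cong A_1\times_\kk\cdots\times_\kk A_n$, where $A_i=\kk[\Delta_i]$ is a positively graded Noetherian ring with $[A_i]_0=\kk$ and $A_i\neq[A_i]_0$. Since $\dim(R)=\dim(\Delta)+1=d+1\ge 2$, we have $\dim(R)\neq 1$, so \autoref{thm:poyopoyo} applies with $T=\kk$. In the notation of that theorem, and using $\dim(A_i)=\dim(\Delta_i)+1$ and $\dim(R)=d+1$, we identify $X=\{\,1\le i\le n:\dim(A_i)=\dim(R)\,\}=\{\,1\le i\le n:\dim(\Delta_i)=d\,\}$. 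Part (1) is then precisely \autoref{thm:poyopoyo}(1)(i).

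For parts (2) and (3) I would apply \autoref{thm:poyopoyo}(1)(iv) and (1)(iii), respectively, and then eliminate the clauses indexed by $i\notin X$. Because $\dim(A_i)\ge 1$ for every $i$, the clause ``$\dim(A_i)=0$ for all $i\notin X$'' in (1)(iv) holds if and only if $X=\{1,\dots,n\}$; because $\MM_{A_i}^2\neq(0)$ for every $i$, the clause ``$\MM_{A_i}^2=0$ for all $i\notin X$'' in (1)(iii) likewise holds if and only if $X=\{1,\dots,n\}$. In either case this is exactly the condition $\dim(\Delta_i)=d$ for all $i$, and once it holds the remaining clause ``for all $i\in X$'' becomes ``for all $i$'', and reads $\sqrt{\tr_{A_i}(\omega_{A_i})}_{A_i}\supseteq\MM_{A_i}$ (for (2)), respectively $\tr_{A_i}(\omega_{A_i})\supseteq\MM_{A_i}$ (for (3)). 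This gives the asserted equivalences.

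The argument is essentially bookkeeping, and the only step that genuinely needs care---the \emph{main obstacle}, such as it is---is confirming the applicability of \autoref{thm:poyopoyo} and ruling out its degenerate side-conditions: the former rests on the normalization $\dim\kk[\Delta]=\dim\Delta+1$ together with the hypothesis $d>0$ (which is precisely why the theorem is phrased with $d>0$ rather than with $\dim(R)\neq 1$), while the latter rests on the reducedness of $\kk[\Gamma]$ and on each $A_i\neq\kk$. Granting these observations, parts (1)--(3) follow formally.
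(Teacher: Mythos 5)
Your proposal is correct and follows exactly the paper's route: the paper's entire proof of this theorem is "This follows from \autoref{lem:SRfiberproduct} and \autoref{thm:poyopoyo}," and you have simply filled in the bookkeeping (the identification $\dim \kk[\Gamma]=\dim\Gamma+1$, the elimination of the degenerate clauses via reducedness of Stanley--Reisner rings, and the check that $\dim(R)=d+1\ge 2$). Your slight detour through \autoref{thm:poyopoyo}(1)(iii)--(iv) rather than (2) is immaterial, since the paper derives (2) from (1) by the same reducedness argument.
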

\begin{proof}
This follows from \autoref{lem:SRfiberproduct} and \autoref{thm:poyopoyo}.
\end{proof}

\section{Noetherian rings of Teter type and their applications}\label{sec:teterStanley}
In this section, we apply \autoref{MainTHM:B} to characterize Stanley--Reisner rings whose canonical traces contain the graded maximal ideal, in terms of simplicial complexes.
As a preparation for this, we first generalize \cite[Theorem 2.1]{gasanova2022rings} to Noetherian rings that are not necessarily Cohen--Macaulay.

\subsection{Noetherian rings of Teter type}
The following is a natural generalization of the notion of rings of Teter type, which was defined for Cohen--Macaulay rings in \cite{gasanova2022rings}, to Noetherian (positively graded) rings.

\begin{definition}[cf. {\cite{gasanova2022rings}}]
Let $R$ be a Noetherian positively graded ring.
We say that $R$ is of {\it Teter type}
if there exists a graded $R$-homomorphism
$\phi:\omega_R \rightarrow R$ such that
$\tr_R(\omega_R)=\phi(\omega_R)$.
\end{definition}

\begin{remark}
In \cite{gasanova2022rings}, the aforementioned concept was introduced for non-Gorenstein (Cohen--Macaulay) rings.
Note that our definition includes those that are quasi-Gorenstein.
\end{remark}

Recall that $R$ is called unmixed if $\dim(R)=\dim(R/\pp)$ for any $\pp \in \Ass(R)$.

\begin{theorem}\label{Thm:nonCMTeter}
Let $R$ be a unmixed generically Gorenstein graded ring.
Then $R$ is of Teter type if and only if $R$ is quasi-Gorenstein.
\end{theorem}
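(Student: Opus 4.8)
The plan is to dispose of the easy implication and then reduce the converse --- using generic Gorensteinness --- to the assertion that $\omega_R\cong\tr_R(\omega_R)$ forces $R$ to be quasi-Gorenstein, which I will prove by localizing at a minimal prime of the trace. If $R$ is quasi-Gorenstein, then $\omega_R\cong R$ and, by \autoref{rem:gradedaoyamagoto}~(4), $\tr_R(\omega_R)=R$, so any graded isomorphism $\phi\colon\omega_R\xrightarrow{\ \sim\ }R$ witnesses that $R$ is of Teter type. Conversely, suppose $\phi\colon\omega_R\to R$ satisfies $\phi(\omega_R)=\tr_R(\omega_R)$. Since $R$ is unmixed and generically Gorenstein, \autoref{rem:Canon.NZD} lets me identify $\omega_R$ with a graded ideal $I\subseteq R$ containing a homogeneous non-zerodivisor $t$, and ${}^{*}Q(R)$ is Gorenstein by \autoref{rem:genGoriso}. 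As $t$ is a non-zerodivisor, the relation $t\phi(i)=\phi(ti)=i\phi(t)$ shows that $\phi\colon I\to R$ is multiplication by $x:=\phi(t)/t\in{}^{*}Q(R)$. Because the inclusion $I\hookrightarrow R$ is an element of $\Hom_R(I,R)$, we have $I\subseteq\tr_R(I)=\phi(I)=xI$, so $t=xi_0$ for some $i_0\in I$; hence $xy=0$ implies $ty=i_0(xy)=0$, i.e.\ $y=0$, so $x$ is a unit of ${}^{*}Q(R)$ and $\phi$ is injective. Therefore $\omega_R\cong\phi(\omega_R)=\tr_R(\omega_R)=:J$, a graded ideal of $R$ isomorphic to $\omega_R$.

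Now assume, for contradiction, that $R$ is not quasi-Gorenstein, so $J\subsetneq R$ by \autoref{rem:gradedaoyamagoto}~(4); choose a minimal graded prime $\pp$ of $J$ and set $A:=R_{(\pp)}$, $\mm:=\pp A$. By \autoref{rem1.5}, $R$ is a quotient of a graded Gorenstein ring, hence catenary, so $A$ is again unmixed and generically Gorenstein; it is of Teter type (localize $\phi$, using \autoref{rem:gradedaoyamagoto}~(1) and the compatibility of the trace with localization), it is not quasi-Gorenstein, and $\tr_A(\omega_A)=J_{(\pp)}\cong\omega_A$ is $\mm$-primary because $\pp$ is minimal over $J$. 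Since $\tr_{R_{\qq}}(\omega_{R_{\qq}})=R_{\qq}$ for every minimal prime $\qq$ of $R$, the prime $\pp$ is not minimal, so $\dim A\ge 1$, and hence $\depth_{\mm}A\ge 1$ as $A$ is unmixed. If $\dim A\ge 2$: the canonical module $\omega_A$ satisfies $(S_2)$, so $\depth_{\mm}J_{(\pp)}=\depth_{\mm}\omega_A\ge 2$, and the depth lemma applied to $0\to J_{(\pp)}\to A\to A/J_{(\pp)}\to 0$ gives $\depth_{\mm}(A/J_{(\pp)})\ge\min\{\depth_{\mm}A,\ \depth_{\mm}J_{(\pp)}-1\}\ge 1$, contradicting that $A/J_{(\pp)}\ne 0$ has finite length. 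If $\dim A=1$: then $A$ is $1$-dimensional and unmixed, hence Cohen--Macaulay; the module $A/J_{(\pp)}$ has grade $1$, so $\Hom_A(A/J_{(\pp)},A)=0$ and $\Ext^1_A(A/J_{(\pp)},A)\ne 0$, and the exact sequence $0\to A\to\Hom_A(J_{(\pp)},A)\to\Ext^1_A(A/J_{(\pp)},A)\to 0$ forces $\Hom_A(J_{(\pp)},A)\supsetneq A$. On the other hand, $\tr_A(J_{(\pp)})=\tr_A(\omega_A)=J_{(\pp)}$ puts the image of every $A$-homomorphism $J_{(\pp)}\to A$ inside $J_{(\pp)}$, so $\Hom_A(J_{(\pp)},A)=\Hom_A(J_{(\pp)},J_{(\pp)})$; and since $A$ is $(S_2)$ the natural map $A\to\Hom_A(\omega_A,\omega_A)$ is an isomorphism, whence $\Hom_A(J_{(\pp)},J_{(\pp)})=A$ --- a contradiction. (The case $\dim A=0$ does not occur, by what was said about minimal primes.) Hence $R$ is quasi-Gorenstein.

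The conceptual step is the reduction in the first paragraph (Teter type plus generic Gorensteinness gives $\omega_R\cong\tr_R(\omega_R)$, via injectivity of $\phi$, which itself relies on realizing $\omega_R$ as an ideal with a non-zerodivisor); the main obstacle is then the bookkeeping of the last paragraph --- verifying that all hypotheses descend to $R_{(\pp)}$ and pinning down the contradiction in each dimension range, the delicate case being $\dim R_{(\pp)}=1$, where one must play $\Hom_A(J_{(\pp)},A)=\Hom_A(J_{(\pp)},J_{(\pp)})=A$ against $\Hom_A(J_{(\pp)},A)\supsetneq A$. An alternative finish, if a structural description of Teter-type rings extending \cite{gasanova2022rings} is developed (a Teter-type ring being a quotient of a quasi-Gorenstein ring by the annihilator of its graded maximal ideal), would be to observe that unmixedness and generic Gorensteinness force that quasi-Gorenstein cover to coincide with $R$.
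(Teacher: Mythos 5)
Your proof is correct, and its overall skeleton matches the paper's (establish $\omega_R\cong\tr_R(\omega_R)$, localize at a minimal prime of the trace so that it becomes primary to the maximal ideal, then derive a contradiction by a depth argument in dimension $\ge 2$ and an endomorphism-ring argument in dimension $1$), but the two key steps are carried out by genuinely different arguments. For the isomorphism $\omega_R\cong\tr_R(\omega_R)$ you realize $\omega_R$ as an ideal $I$ containing a homogeneous non-zerodivisor $t$ (this is exactly where unmixedness enters, via \autoref{rem:Canon.NZD}) and show that the Teter map is multiplication by the unit $\phi(t)/t$ of ${}^*Q(R)$, hence injective; the paper instead localizes first and then kills the kernel $C$ of the epimorphism $\omega_R\to\tr_R(\omega_R)$ by observing that $\Ass(C)\subseteq\Assh(R)$ while $C$ vanishes at every prime of $\Assh(R)$ by generic Gorensteinness. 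Your argument is global and arguably more transparent, at the cost of invoking the ideal realization of $\omega_R$. In dimension $1$ the paper simply quotes \cite[Theorem 2.1]{gasanova2022rings}, whereas you re-derive the contradiction from $\Hom_A(J_{(\pp)},A)=\Hom_A(J_{(\pp)},J_{(\pp)})\cong A$ against $\Hom_A(J_{(\pp)},A)\supsetneq A$; this is essentially the core of the cited theorem, so your proof is more self-contained there. One point to tighten: you localize homogeneously, and for a graded prime $\pp\neq\MM_R$ the Krull dimension of $R_{(\pp)}$ equals $\height(\pp)+1$, so your case split ``$\dim A\ge 2$'' versus ``$\dim A=1$'' must be read as a split on $\height(\pp)$ (equivalently, replace $R_{(\pp)}$ by $R_\pp$, as the paper does); otherwise the inequality $\depth_{\mm}\omega_A\ge 2$ can fail when $\height(\pp)=1$. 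With that reading every step checks out.
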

\begin{proof}
If $R$ is quasi-Gorenstein,
the graded isomorphism $\omega_R \cong R$ is the defining map of Teter type.
Assume that $R$ is of Teter type.
If $\dim(R)=0$, then $R$ is Gorenstein because $R$ is generically Gorenstein.
Thus we may assume that $\dim(R)>0$.
Assume that \( R \) is not quasi-Gorenstein.  
Since the trace is compatible with localization
(see \cite[Proposition 2.8~(viii)]{lindo2017trace}), we have  
${}^*\Supp(\tr_R(\omega_R)) \subseteq {}^*\Supp(\omega_R)$.
Since \( R \) is not quasi-Gorenstein, it follows from \autoref{rem:gradedaoyamagoto}~(4) that \( \tr_R(\omega_R) \neq R \).  
Moreover, since 
$\height(\tr_R(\omega_R))
> 0$~(see \autoref{rem:Canon.NZD}),
we can choose \( \pp \in \Min(R/\tr_R(\omega_R)) \) such that \( \height(\pp) > 0 \).  
Thus, we have  
$\pp \in \Min(R/\tr_R(\omega_R)) \cap {}^*\Supp(\omega_R)$.
Since $\omega_R$ as well as $\tr_R(\omega_R)$ localize at $\pp \in {}^*\Supp(\omega_R)$,
and by
\autoref{rem:gradedaoyamagoto}~(5),
we may replace $R$ by $R_\pp$, and hence we may assume that $R$ is a generically Gorenstein ring of Teter type but not quasi-Gorenstein with $\dim(R)>0$ and that $\tr_R(\omega_R)$ is $\MM_R$-primary.
Choose an epimorphism $\phi : \omega_R\to \tr_R(\omega_R)$.
Here, we show that \( C = \ker(\phi) \) is zero.  

Indeed, from the short exact sequence  
$0 \rightarrow C \rightarrow \omega_R \rightarrow \tr_R(\omega_R) \rightarrow 0$
and
$\Ass_R(\omega_R) = \Assh(R)$ by \autoref{rem:gradedaoyamagoto}~(2),
we obtain the inclusion  
$\Ass_R(C) \subseteq \Ass_R(\omega_R) = \Assh(R)$.
Moreover,
we have $\Assh(R) \subseteq {}^*\Spec(R) \setminus {}^*\Supp(C)$
because $R$ is generically Gorenstein.
Hence, $\Ass(C) \subseteq {}^*\Spec(R) \setminus {}^*\Supp(C)$.
Since $\Ass_R(C) \subseteq {}^*\Supp(C)$,
we obtain $\Ass_R(C)=\emptyset$.
This yields a contradiction.

Therefore
$\omega_R \cong \tr_R(\omega_R)$.
If \( \dim(R) \geq 2 \), notice that \( \omega_R \) satisfies \( (S_2) \) and \( R/\tr_R(\omega_R) \) is an Artinian ring, we see that \( \depth(R) = 0 \) by applying the local cohomology functor $H_{\mm_R}^{*}(-)$ to the
exact sequence  
$0 \rightarrow \omega_R \rightarrow R \rightarrow R/\tr_R(\omega_R) \rightarrow 0$.
Then we have \( \MM_R \in \Ass(R) \), so that \( R \cong R_{\MM_R} \) is quasi-Gorenstein, leading to a contradiction.  
Thus we may assume \( \dim(R) = 1 \).  
Since \( \depth(R) > 0 \), it follows that \( R \) is Cohen–Macaulay.
Therefore, the assertion follows from \cite[Theorem 2.1]{gasanova2022rings}.
\end{proof}

\begin{remark}
The outline of the proof of \autoref{Thm:nonCMTeter} follows \cite[Theorem 2.1]{gasanova2022rings}, but it has been skillfully generalized in a technical manner so that it can also be applied to non-Cohen--Macaulay cases.
\end{remark}

\subsection{Applications to Stanley--Reisner rings}\label{stanleyNICE}
In this subsection, we apply the previous results to Stanley--Reisner rings.
We define some fundamental terms.

We say that \( \Delta \) is \emph{pure} if \( \dim(\sigma) = \dim(\Delta) \) for every facet \( \sigma \in \FF(\Delta) \).
A simplicial complex \( \Delta \) is said to be \emph{strongly connected} if, for any \( \sigma, \tau \in \FF(\Delta) \), there exist facets  
$\sigma_0, \sigma_1, \sigma_2, \ldots, \sigma_k \in \FF(\Delta)$
such that \( \sigma_0 = \sigma \), \( \sigma_k = \tau \), and  
$\dim(\sigma_i \cap \sigma_{i-1}) = \dim(\Delta) - 1$ for $i = 1, \ldots, k$.

\begin{remark}
Any strongly connected simplicial complex is pure.
\end{remark}

A face $\tau\in\Delta$ is a {\it cone} if $\tau\subseteq \sigma$ for any $\sigma\in\FF(\Delta)$.
A simplicial complex $\Delta$ is called {\it normal} if $\lk_{\Delta}(\sigma)$ is connected for any $i$-face with $i\leq \dim(\Delta)-2$.

\begin{remark}\label{rem:EX}
Since $\dim(\emptyset)=-1$ and $\lk_{\Delta}(\emptyset) =\Delta$, a normal simplicial complex of dimension at least 1 is connected. It turns out that, if $\Delta$ is normal, then $\lk_{\Delta}(\sigma)$
is strongly connected for all $\sigma\in \Delta$~(see \cite[Proposition 11.7]{Bj}). In particular a normal simplicial complex is pure. 
Indeed, it is known that $\Delta$ is normal if and only if $\kk[\Delta]$ satisfies Serre's condition $(S_2)$.
\end{remark}

\begin{remark}\label{rem:sr-S2-punctured}
Let \(\Delta\) be a connected simplicial complex.
If \(\kk[\Delta]\) satisfies \((S_2)\) on the punctured spectrum,
then $\Delta$ is normal.
In particular,
If
$\kk[\Delta]$
is
quasi-Gorenstein on the punctured spectrum,
then $\Delta$ is normal.
\end{remark}
\begin{proof}
Let \(\sigma\in\Delta\) with \(\dim\sigma\leq \dim\Delta-2\). If \(\sigma=\emptyset\), then \(\lk_\Delta(\sigma)=\Delta\), which is connected by assumption.

For a nonempty face \(\sigma\), set
$\mathfrak p_\sigma=(x_i\mid i\notin\sigma)\kk[\Delta]$.
Then \(\mathfrak p_\sigma\neq\mathfrak m_{\kk[\Delta]}\). By the standard localization formula for Stanley--Reisner rings, we have
\[
\kk[\Delta]_{\mathfrak p_\sigma}\cong \kk(x_i\mid i\in\sigma)[\lk_\Delta(\sigma)]_{\mathfrak n},
\]
where \(\mathfrak n\) denotes the irrelevant maximal ideal of \(\kk(x_i\mid i\in\sigma)[\lk_\Delta(\sigma)]\). Hence, if \(\kk[\Delta]\) satisfies \((S_2)\) on the punctured spectrum, then $\kk[\lk_\Delta(\sigma)]$ satisfies \((S_2)\). In particular, when \(\dim\lk_\Delta(\sigma)\geq 1\), the complex \(\lk_\Delta(\sigma)\) is connected.
Thus \(\Delta\) is normal.

In particular, if \(\kk[\Delta]\) is quasi-Gorenstein on the punctured spectrum, then \(\kk[\Delta]\) satisfies \((S_2)\) on the punctured spectrum by \cite[(1.10)]{aoyama1983some}. Hence \(\Delta\) is normal.
\end{proof}

For any commutative ring $R$,
${H}_i(\Delta, R)$ (resp. $\widetilde{H}_i(\Delta, R)$) denotes the $i$-th (resp. reduced) homology group of $\Delta$ with coefficients in $R$.
For a given $\sigma \in \Delta$,
we say that
$(*)$ holds for $\sigma$ if the following condition is satisfied:
\[
\widetilde{H}_{i}{(\lk_\Delta(\sigma), \kk)} = 
\begin{cases}
    \kk, & \text{if } i = \dim (\lk_\Delta(\sigma)), \\
    0, & \text{otherwise}.
\end{cases}
\]
Notice that $\Delta$ is connected if $(*)$ holds for $\emptyset \in \Delta$.

\begin{definition}
Let $\Delta$ be a $d$-dimensional simplicial complex and let $\kk$ be a field. We say that $\Delta$ is:
\begin{itemize}
\item
a {\it $\kk$-homology sphere} if
$(*)$ holds for any $\sigma \in \Delta$.
\item a {\it $\kk$-homology manifold}
if $\Delta$ is connected and $(*)$ holds for any $\sigma \in \Delta$ such that $\sigma \neq \emptyset$.
\item a {\it pseudomanifold} if it is strongly connected and any $(d-1)$-face is contained in exactly two facets (which means that $\sigma$ holds $(*)$ for every facet $\sigma \in \FF(\Delta)$).
\end{itemize}
Suppose that $\Delta$ is a pseudomanifold.
We say that
\( \Delta \) is \emph{orientable} if \( H_d(\Delta; \ZZ) \neq 0 \), and \emph{\( \kk \)-orientable} if \( H_d(\Delta; \kk) \neq 0 \).
\end{definition}

\begin{remark}
Let $\Delta$ be a simplicial complex and let $\kk$ be a field.
By a classical theorem of Hochster,
$\kk[\Delta]$ is Gorenstein if and only if $\lk_\Delta(\sigma)$ is a $\kk$-homology sphere, where $\sigma$ is the cone face of $\Delta$ maximal by inclusion (e.g. see \cite[Theorem 5.6.1]{bruns1998cohen}).
Note that \( \Delta \) is a \( \kk \)-homology manifold if and only if \( \lk_\Delta(v) \) is a homology sphere for every vertex \( v \in V(\Delta) \).
Moreover, the link of a pseudomanifold is not necessarily a pseudomanifold itself; for example, it may fail to be strongly connected (as in the case of a pinched torus).  
For this reason, it is sometimes more natural to work with normal pseudomanifolds than pseudomanifolds.
\end{remark}

\begin{remark}\label{lem:1}
Let $\Delta$ be a simplicial complex and set $R=\kk[\Delta]$.
If $R$ is quasi-Gorenstein on the punctured spectrum,
then $\lk_\Delta(\sigma)$ is quasi-Gorenstein for any $\emptyset \neq \sigma \in \Delta$.
\begin{proof}
Since $R$ is quasi-Gorenstein on the punctured spectrum,
$R$ satisfies $(S_2)$ on the punctured spectrum.
Thus $R$ satisfies $(S_2)$ by \autoref{rem:sr-S2-punctured}
and so $\Delta$ is pure.
Since \(\Delta\) is pure, we have \({}^*\!\Supp(\omega_R) = {}^*\!\Spec(R)\) by \autoref{rem:gradedaoyamagoto}~(2).
We proceed by induction on \( n = \dim(\sigma) \).
First, consider the case \( n = 1 \), that is, when \( \sigma \) is a vertex \( i \in V(\Delta) \).
Let
$$\mathfrak{p}_{x_i} := (x_j : j \neq i)R \in \Spec(R)$$
be the homogeneous prime ideal of \( R \) corresponding to \( i \).
The set of all homogeneous elements in \( R \setminus \mathfrak{p}_{x_i} \) coincides with the multiplicative subset
$S_{x_i} := \{ x_i^k : k \in \mathbb{N} \} \subset R$.
Since \( R \) is quasi-Gorenstein on the punctured spectrum, it follows that \( R_{x_i} \cong R_{(\pp_{x_i})} \) is a quasi-Gorenstein graded ring by \autoref{rem:gradedaoyamagoto}~(1).
Moreover, there is an isomorphism
$R_{x_i} \cong \kk[\operatorname{lk}_\Delta(x_i)][x_i, x_i^{-1}]$
(see \cite[p.~62]{stanley2007combinatorics}).
Hence, since \( \kk[\operatorname{lk}_\Delta(x_i)][x_i, x_i^{-1}] \) is quasi-Gorenstein, it follows from \autoref{rem:QGPolyExtension} that \( \kk[\operatorname{lk}_\Delta(x_i)] \) is also quasi-Gorenstein.

Now, assume that the claim holds for \( n \), and consider the case \( n+1 \).
In this situation, we can write \( \sigma = \{v\} \cup \tau \),
and define \( \Delta' := \operatorname{lk}_\Delta(\tau) \).
Then we have
$\operatorname{lk}_\Delta(\sigma) = \operatorname{lk}_{\Delta'}(\{v\})$
(see the proof of \cite[Proposition~6.3.15]{villarreal2001monomial}).
Therefore, it suffices to show that \( \operatorname{lk}_{\Delta'}(\{v\}) \) is quasi-Gorenstein.

Since \( \dim(\tau) \leq n \), by the induction hypothesis, \( \Delta' \) is quasi-Gorenstein.
In particular, the Stanley--Reisner ring \( \kk[\Delta'] \) is quasi-Gorenstein (on the punctured spectrum).
Applying the base case for \( n = 1 \), we conclude that \( \operatorname{lk}_{\Delta'}(\{v\}) \) is quasi-Gorenstein.
\end{proof}
\end{remark}

\begin{remark}\label{Rem:useful?}
Let $\Delta$ be a simplicial complex and set $R=\kk[\Delta]$.
Then the following hold:
\begin{itemize}
\item[\rm (1)]
If $R$ is Cohen--Macaulay on the punctured spectrum and $\Delta$ is connected,
then $\Delta$ is normal;
\item[\rm (2)]
If $\Delta$ is pure, then
$\tr_R(\omega_R)$ describes non-quasi-Gorenstein locus of $R$.
\end{itemize}
\begin{proof}
(1):
Take any $i$-face $\sigma \in \Delta$ with $i \le \dim(\Delta)-2$.
When \( i = -1 \),
since $\dim(\emptyset)=-1$ and \( \lk_\Delta(\emptyset) = \Delta \) is connected, the statement holds.
When \( i \geq 0 \), by assumption, \( \lk_\Delta(\sigma) \) is Cohen--Macaulay, which in particular implies that it is connected, so the statement holds.

(2): Noting that \( \Delta \) is pure if and only if \( R \) is unmixed,
the claim follows from \autoref{rem:gradedaoyamagoto}~(2), (5).
\end{proof}
\end{remark}

The following
is a generalization of \cite[Proposition 3.4]{miyashita2024canonical} to the non-Cohen–Macaulay case.
The proof is almost the same as that of \cite[Proposition 3.4]{miyashita2024canonical}, but it requires \autoref{Thm:nonCMTeter}.

\begin{proposition}\label{p:nonCMnearlyG=G}
Let $\Delta$ be a non-orientable normal pseudomanifold,
and $R=\kk[\Delta]$.
Then
$\tr_{R}(\omega_R)\subseteq \MM_{R}^2$.
\end{proposition}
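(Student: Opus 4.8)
The plan is to work in the fine $\ZZ^n$-grading on $R=\kk[\Delta]$ and to show separately that $\tr_R(\omega_R)$ has no component in $\ZZ^n$-degree $\mathbf 0$ and none in $\ZZ^n$-degree $\mathbf e_v$ for any vertex $v$; together these give exactly $\tr_R(\omega_R)\subseteq\MM_R^2$. For the setup, note that a pseudomanifold is pure and has at least two facets (every codimension-one face lies in two of them), so it has no cone vertex; hence $R$ is unmixed, and the localization of $R$ at a minimal prime — which corresponds to a facet $F$ — is a localization of $\kk[x_i:i\in F]$, hence regular, so $R$ is generically Gorenstein. By \autoref{rem:Canon.NZD}, $\omega_R$ is then isomorphic to a graded ideal $I_R\subseteq R$ with $\grade(I_R)>0$, so that $\tr_R(\omega_R)=I_R\,\Hom_R(I_R,R)$ and $\height\big(\tr_R(\omega_R)\big)>0$. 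Purity also gives ${}^*\!\Supp(\omega_R)={}^*\!\Spec(R)$ by \autoref{rem:gradedaoyamagoto}~(2), so \autoref{rem:gradedaoyamagoto}~(5) applies at every homogeneous prime; in particular, for each vertex $v$, writing $\pp_{x_v}=(x_w:w\neq v)R$, we have $R_{(\pp_{x_v})}=R_{x_v}\cong\kk[\lk_\Delta(v)][x_v,x_v^{-1}]$, and $\lk_\Delta(v)$ is again a connected normal pseudomanifold (the hypothesis is vacuous when $\dim\Delta\le 1$, since a $1$-dimensional pseudomanifold is a cycle and hence orientable).

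For the degree-$\mathbf 0$ part, $[\tr_R(\omega_R)]_{\mathbf 0}\neq 0$ is equivalent to $\tr_R(\omega_R)=R$, i.e.\ to $R$ being quasi-Gorenstein (\autoref{rem:gradedaoyamagoto}~(4)). But $R$ is not quasi-Gorenstein: since $\Delta$ is cone-free, Hochster's criterion gives that $\kk[\Delta]$ is Gorenstein iff $\Delta$ is a $\kk$-homology sphere, which fails because $\widetilde H_{\dim\Delta}(\Delta;\kk)=0$ by non-orientability, and a quasi-Gorenstein Stanley--Reisner ring is Gorenstein (equivalently, Hochster's multigraded formula gives $(\omega_R)_{\mathbf 0}\cong\widetilde H_{\dim\Delta}(\Delta;\kk)^{\vee}=0$, which is incompatible with a graded isomorphism $\omega_R\cong R$). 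Hence $[\tr_R(\omega_R)]_{\mathbf 0}=0$, and it remains only to kill degree one.

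The degree-$\mathbf e_v$ part is the heart. As $\omega_R$ is $\ZZ^n$-graded, so is the monomial ideal $\tr_R(\omega_R)$, and since $R_1=\bigoplus_v\kk x_v$ we get $[\tr_R(\omega_R)]_1=\bigoplus_{v:\,x_v\in\tr_R(\omega_R)}\kk x_v$; thus it suffices to prove $x_v\notin\tr_R(\omega_R)$ for every vertex $v$. Assume to the contrary that $x_v\in\tr_R(\omega_R)$. Then $\tr_R(\omega_R)\not\subseteq\pp_{x_v}$, so $R_{x_v}$, and hence $\kk[\lk_\Delta(v)]$, is quasi-Gorenstein by \autoref{rem:gradedaoyamagoto}~(5) and \autoref{rem:QGPolyExtension}; this already forces $\lk_\Delta(v)$ to be a $\kk$-homology sphere, but it is not yet a contradiction, so one must exploit the \emph{degree-one} information, not merely the support of the trace. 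Following the proof of \cite[Proposition~3.4]{miyashita2024canonical} — and, behind it, that of \autoref{Thm:nonCMTeter} — I would localize at a minimal prime of $R/\tr_R(\omega_R)$ of positive height, reducing to the case where $\tr_R(\omega_R)$ is $\MM$-primary and still carries a linear element, while the ring remains unmixed, generically Gorenstein, and not quasi-Gorenstein. The presence of that linear element in the $\MM$-primary trace, together with the fact (Hochster's formula plus non-orientability) that $\omega_R$ is generated in positive degrees and with the $(S_2)$-property of $\omega_R$, should then be used to produce a \emph{single} graded homomorphism $\omega_R\to R$ whose image is all of $\tr_R(\omega_R)$, i.e.\ to show the ring is of Teter type. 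But \autoref{Thm:nonCMTeter} asserts that an unmixed generically Gorenstein ring of Teter type is quasi-Gorenstein — a contradiction. Hence $x_v\notin\tr_R(\omega_R)$ for all $v$, and combined with the degree-$\mathbf 0$ step this yields $\tr_R(\omega_R)\subseteq\MM_R^2$.

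The main obstacle is precisely the passage from "$x_v\in\tr_R(\omega_R)$" to "Teter type" in the non-Cohen--Macaulay setting: one must control $\Hom_R(\omega_R,R)$ in low degree via Hochster's formula and the defining properties of a pseudomanifold (strong connectedness, and each codimension-one face lying in exactly two facets), rather than just the radical of the trace. This is exactly where the Cohen--Macaulay argument of \cite[Proposition~3.4]{miyashita2024canonical} invokes \cite[Theorem~2.1]{gasanova2022rings}, and where, in the absence of Cohen--Macaulayness, \autoref{Thm:nonCMTeter} must be used in its place.
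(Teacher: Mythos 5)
Your proposal is essentially the paper's own proof: both deduce from $\tr_R(\omega_R)\nsubseteq\MM_R^2$, via the argument of \cite[Proposition~3.4]{miyashita2024canonical}, the existence of a single graded homomorphism $\phi\colon\omega_R\to R$ with $\phi(\omega_R)=\tr_R(\omega_R)$ (i.e.\ that $R$ is of Teter type), and then contradict \autoref{Thm:nonCMTeter} --- which applies because a pseudomanifold is pure and cone-free, so $R$ is unmixed and generically Gorenstein --- so your multigraded splitting into degree $\mathbf 0$ and degrees $\mathbf e_v$ is harmless extra scaffolding around the same core step, which you, like the paper, delegate to the cited proof. One aside is false though harmless: a quasi-Gorenstein Stanley--Reisner ring need not be Gorenstein (quasi-Gorensteinness does not force Cohen--Macaulayness), but the parenthetical argument you give instead, that Hochster's formula forces $(\omega_R)_{\mathbf 0}=0$ while $R_{\mathbf 0}=\kk$, is the correct one.
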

\begin{proof}
Suppose that $\tr_R(\omega_R)\nsubseteq \MM_R^2$.
In this case, following the proof of \cite[Proposition 3.4]{miyashita2024canonical},  
there exists \( \phi \in \Hom_R^*(R,\omega_R) \) such that \( \tr_R(\omega_R) = \phi(\omega_R) \).  
This contradicts \autoref{Thm:nonCMTeter}.
\end{proof}

We establish a slight generalization of \cite[Proposition 3.4]{miyashita2024canonical}.

\begin{proposition}\label{prop:main1}
Let $\Delta$ be a strongly connected simplicial complex with $\dim(\Delta) \ge 2$ and set $R=\kk[\Delta]$.
If there is no cone point of $\Delta$ and $\lk_\Delta(\sigma)$ is quasi-Gorenstein for any $\sigma \in \Delta$ with $\dim(\sigma) \ge \dim(\Delta)-2$,
then $\Delta$ is a pseudomanifold.
Moteover,
if $\Delta$ is normal,
then we have either $R$ is quasi-Gorenstein or $\tr_R(\omega_{R}) \subseteq \MM_{R}^2$.
\end{proposition}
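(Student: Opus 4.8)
The plan is to prove the two assertions in turn: for the first I would combine the quasi‑Gorenstein hypotheses on the codimension‑$\le 2$ links with the fiber‑product results of the paper to pin down those links, and then run a combinatorial argument over the dual graph of $\Delta$; the second assertion then follows at once from \autoref{Thm:nonCMTeter}.

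For the first assertion, since a strongly connected complex is pure, $\Delta$ is pure of dimension $d:=\dim(\Delta)$, hence $R=\kk[\Delta]$ is unmixed. I would first record two consequences of the hypothesis. For a $(d-1)$-face $\sigma$, the complex $\lk_\Delta(\sigma)$ is a nonempty $0$-dimensional complex, say on $m$ vertices, with $\kk[\lk_\Delta(\sigma)]\cong\kk[x_1,\dots,x_m]/(x_ix_j:i\neq j)$; for $m\ge 3$ this ring is Cohen--Macaulay of type $m-1\ge 2$, hence not Gorenstein and therefore not quasi-Gorenstein, so the hypothesis forces $m\le 2$, i.e. \emph{every $(d-1)$-face of $\Delta$ lies in at most two facets}. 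For a $(d-2)$-face $\tau$, the complex $G:=\lk_\Delta(\tau)$ is pure of dimension $1$ (a graph without isolated vertices), and $\kk[G]$ is quasi-Gorenstein of Krull dimension $2$; if $G$ were disconnected, then by \autoref{lem:SRfiberproduct} $\kk[G]$ would be a fiber product over $\kk$ of at least two positively graded rings properly containing $\kk$, hence not quasi-Gorenstein by \autoref{thm:poyopoyo}~(1)(ii) (note $\dim\kk[G]=2\neq 1$), a contradiction; so $G$ is connected, hence Cohen--Macaulay by Reisner's criterion and thus Gorenstein, and by Hochster's description of Gorenstein Stanley--Reisner rings (cf.\ \cite[Theorem~5.6.1]{bruns1998cohen}) \emph{a connected Gorenstein graph is a single edge, a path on three vertices, or an $n$-cycle with $n\ge 3$}.

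Now suppose, for contradiction, that $\Delta$ is not a pseudomanifold. Then some $(d-1)$-face $\sigma$ lies in exactly one facet $F=\sigma\cup\{v\}$, and I would show $v$ is a cone point. If $F$ were the only facet then $\Delta$ is a single simplex, whose vertices are all cone points, already a contradiction; so $\Delta$ has more than one facet and its dual graph is connected with at least two vertices. By induction on the distance from $F$ in the dual graph I would show that every facet $G$ satisfies $v\in G$ and that $G\setminus\{v\}$ lies in exactly one facet; the base case $G=F$ holds since $F\setminus\{v\}=\sigma$. For the inductive step, let $G$ lie at distance $k$ and $G'$ be a neighbour at distance $k+1$, so $G\cap G'=G\setminus\{x\}$ for some $x\in G$; if $x=v$ then $G\setminus\{v\}=G\cap G'$ lies in the two distinct facets $G,G'$, against the inductive hypothesis, so $x\neq v$ and hence $v\in G'$. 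Putting $\tau:=G\setminus\{v,x\}$ (a $(d-2)$-face), in $\lk_\Delta(\tau)$ the vertex $x$ has degree one (since $\tau\cup\{x\}=G\setminus\{v\}$ lies in one facet) with unique neighbour $v$, so by the classification $\lk_\Delta(\tau)$ is a single edge or a path on three vertices; it cannot be a single edge, since then $\tau\cup\{v\}=G\cap G'$ would lie in only one facet; so $\lk_\Delta(\tau)$ is a path $x - v - u$, whence $G'=\tau\cup\{v,u\}$ and $G'\setminus\{v\}=\tau\cup\{u\}$ lies in exactly one facet because $u$ has degree one. This completes the induction; since the dual graph is connected, every facet of $\Delta$ contains $v$, so $v$ is a cone point — a contradiction. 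Hence $\Delta$ is a pseudomanifold. I expect this combinatorial argument to be the main obstacle: the link classification supplies the structure, but one must track carefully which lower faces lie in one versus two facets and verify that the induction sweeps out the whole dual graph.

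For the second assertion, assume $\Delta$ is normal; by the first part it is a normal pseudomanifold. Since $R=\kk[\Delta]$ is reduced it is generically Gorenstein (its localization at each minimal prime is a field), and since $\Delta$ is pure, $R$ is unmixed. Suppose $\tr_R(\omega_R)\not\subseteq\MM_R^2$. Then, exactly as in the proof of \autoref{p:nonCMnearlyG=G} (following the proof of \cite[Proposition~3.4]{miyashita2024canonical}), there is a graded homomorphism $\omega_R\to R$ whose image is $\tr_R(\omega_R)$, i.e.\ $R$ is of Teter type; hence $R$ is quasi-Gorenstein by \autoref{Thm:nonCMTeter}. Therefore either $R$ is quasi-Gorenstein or $\tr_R(\omega_R)\subseteq\MM_R^2$, as claimed. (Alternatively one may split on $\kk$-orientability, using \autoref{p:nonCMnearlyG=G} in the non-orientable case.)
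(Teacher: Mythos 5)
Your proof is correct, and the interesting difference lies in the first assertion. There the paper merely notes that the relevant links are quasi-Gorenstein iff Gorenstein (low Krull dimension plus the fact that quasi-Gorenstein rings are $(S_2)$, hence Cohen--Macaulay in dimension $\le 2$) and then outsources the entire combinatorial step to \cite[Proposition~2.8]{miyashita2024canonical}; you instead give a self-contained argument. Your version checks out: the $0$-dimensional links force every $(d-1)$-face into at most two facets; your use of \autoref{lem:SRfiberproduct} together with \autoref{thm:poyopoyo}(1) to force connectivity of the $(d-2)$-links (each component of a pure $1$-dimensional link is a nontrivial positively graded factor, and the Krull dimension is $2\neq 1$) is a nice internal substitute for the $(S_2)$ argument; the classification of connected Gorenstein graphs correctly eliminates the cycle (no degree-one vertex) and the single edge ($\tau\cup\{v\}$ lies in two facets) in your inductive step; and strong connectedness guarantees the induction exhausts the dual graph, producing the cone point $v$ and the desired contradiction. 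For the second assertion your parenthetical alternative --- split on $\kk$-orientability, invoking \cite[Theorem~4.4]{varbaro2024lefschetz} in the orientable case and \autoref{p:nonCMnearlyG=G} otherwise --- is precisely the paper's argument. Your main route (from $\tr_R(\omega_R)\not\subseteq\MM_R^2$ deduce Teter type and then quasi-Gorensteinness via \autoref{Thm:nonCMTeter}) is just the contrapositive of the same reasoning; it tacitly assumes that the construction of a surjection $\omega_R\to\tr_R(\omega_R)$ in \cite[Proposition~3.4]{miyashita2024canonical} uses only the normal-pseudomanifold structure and not non-orientability, which is consistent with how \autoref{p:nonCMnearlyG=G} is phrased, and in any case your alternative route covers it. What your approach buys is independence from the external combinatorial lemma; what the paper's buys is brevity.
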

\begin{proof}
For any $\sigma \in \Delta$ with $\dim(\sigma) \ge \dim(\Delta) - 2$,  
note that $\lk_\Delta(\sigma)$ is quasi-Gorenstein if and only if it is Gorenstein because $\dim(\lk_\Delta(\sigma)) \le 1$.
Under this assumption, we can check that \( \Delta \) is a pseudomanifold, as in the proof of \cite[{Proposition~2.9}]{miyashita2024canonical}.
When \( \Delta \) is a normal pseudomanifold and $R$ is not quasi-Gorenstein,
we have
$\Delta$ is non-orientable by \cite[Theorem 4.4]{varbaro2024lefschetz}.
Then we obtain $\tr_R(\omega_R) \subseteq \MM_{R}^2$ 
by \autoref{p:nonCMnearlyG=G}.
\end{proof}

\begin{remark}\label{rem:noconepoint}
Let $\Delta$ be a connected simplicial complex 
and set $R=\kk[\Delta]$.
If $R$ is not quasi-Gorenstein but quasi-Gorenstein on the punctured spectrum,
then there is no cone point of $\Delta$.
\end{remark}
\begin{proof}
Assume that there exists a cone point $i$ of $\Delta$.
Note that $R \cong \kk[\lk_\Delta(x_i)][x_i]$ because $i$ is a cone point of $\Delta$.
Then $R_{x_i} \cong \kk[\lk_\Delta(x_i)][x_i,x_i^{-1}]$ is quasi-Gorenstein by \autoref{lem:1},
so $R \cong \kk[\lk_\Delta(x_i)][x_i]$ is quasi-Gorenstein by \autoref{rem:QGPolyExtension}.
\end{proof}

\begin{corollary}
Let $\Delta$ be a connected simplicial complex with $\dim(\Delta) \ge 2$ and set $R=\kk[\Delta]$.
If $\tr_R(\omega_R) \supseteq \MM_R^2$ and $R$ is not quasi-Gorenstein,
then we have $\tr_R(\omega_R)=\MM_R^2$.
\end{corollary}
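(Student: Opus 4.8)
The plan is to reduce the statement to the single inclusion $\tr_R(\omega_R)\subseteq\MM_R^2$ and then obtain that inclusion by checking that $\Delta$ meets the hypotheses of \autoref{prop:main1}. Indeed, \autoref{prop:main1} concludes — for a strongly connected complex of dimension $\ge 2$ with no cone point and with $\lk_\Delta(\sigma)$ quasi-Gorenstein for all $\sigma$ with $\dim(\sigma)\ge\dim(\Delta)-2$, and with $\Delta$ normal — that either $R$ is quasi-Gorenstein or $\tr_R(\omega_R)\subseteq\MM_R^2$. Since $R$ is assumed not quasi-Gorenstein, the second alternative must hold, and combining it with the hypothesis $\tr_R(\omega_R)\supseteq\MM_R^2$ gives equality. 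So the real work is verifying the three structural hypotheses.

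First I would use normality to set the stage: $\Delta$ normal means $R=\kk[\Delta]$ satisfies Serre's condition $(S_2)$, hence $\Delta$ is pure (so $R$ is unmixed) and, taking $\sigma=\emptyset$ in \cite[Proposition~11.7]{Bj}, $\Delta=\lk_\Delta(\emptyset)$ is strongly connected. Next, from the hypothesis $\tr_R(\omega_R)\supseteq\MM_R^2$ I get $\sqrt{\tr_R(\omega_R)}\supseteq\sqrt{\MM_R^2}=\MM_R$, so, since $\Delta$ is pure, \autoref{Rem:useful?}~(2) yields that $R$ is quasi-Gorenstein on the punctured spectrum.

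With $R$ quasi-Gorenstein on the punctured spectrum (but not quasi-Gorenstein) and $\Delta$ normal and connected, \autoref{rem:noconepoint} shows that $\Delta$ has no cone point. Moreover, applying \autoref{lem:1} to the pure complex $\Delta$, the link $\lk_\Delta(\sigma)$ is quasi-Gorenstein for every nonempty $\sigma\in\Delta$; since $\dim(\Delta)\ge 2$, every $\sigma$ with $\dim(\sigma)\ge\dim(\Delta)-2$ satisfies $\dim(\sigma)\ge 0$, hence is nonempty, so in particular all such links are quasi-Gorenstein. Now \autoref{prop:main1} applies: $\Delta$ is a normal pseudomanifold and, as $R$ is not quasi-Gorenstein, $\tr_R(\omega_R)\subseteq\MM_R^2$, whence $\tr_R(\omega_R)=\MM_R^2$. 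I do not anticipate a substantive obstacle; the only steps needing a little care are the radical computation used to pass to quasi-Gorensteinness on the punctured spectrum and the index-range bookkeeping ensuring that $\dim(\sigma)\ge\dim(\Delta)-2$ forces $\sigma\ne\emptyset$, so that \autoref{lem:1} is applicable.
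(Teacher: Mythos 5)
Your proposal is correct and follows essentially the same route as the paper's proof: purity and strong connectedness from normality, \autoref{Rem:useful?}~(2) to pass from $\tr_R(\omega_R)\supseteq\MM_R^2$ to quasi-Gorensteinness on the punctured spectrum, then \autoref{rem:noconepoint} and \autoref{lem:1} to verify the hypotheses of \autoref{prop:main1}, whose dichotomy combined with the non-quasi-Gorenstein assumption gives the reverse inclusion. Your write-up is in fact slightly more careful than the paper's, making explicit the radical computation and the check that faces of dimension $\ge\dim(\Delta)-2$ are nonempty.
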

\begin{proof}
$R$ is quasi-Gorenstein on the punctured spectrum by \autoref{rem:gradedaoyamagoto}~(6)
so that there is no cone point of $\Delta$ by \autoref{rem:noconepoint}.
Moreover, since $R$ is quasi-Gorenstein on the punctured spectrum,
we have $\lk_\Delta(\sigma)$ is quasi-Gorenstein for any $\sigma \in \Delta$ with $\dim(\sigma) \ge \dim(\Delta)-2$ by \autoref{lem:1}.
Thus we have $\tr_R(\omega_R)=\MM_R^2$ by \autoref{prop:main1}.
\end{proof}

\begin{theorem}\label{thm:good1}
Let $\Delta$ be a connected simplicial complex with $\dim(\Delta) \ge 2$ and let $R=\kk[\Delta]$. Then the following are equivalent:
\begin{itemize}
\item[\rm (1)] $\tr_R(\omega_{R})=R$, that is, $R$ is quasi-Gorenstein;
\item[\rm (2)] $\tr_R(\omega_{R}) \supseteq \MM_{R}$~(e.g., $R$ is nearly Gorenstein);
\item[\rm (3)]
$R$ is quasi-Gorenstein on the punctured spectrum and
$[\tr_R(\omega_{R})]_1 \neq (0)$;
\item[\rm (4)]
$\Delta$ is strongly connected,
$\kk[\lk_\Delta(\sigma)]$ is quasi-Gorenstein for any $\sigma \in \Delta$ such that $\dim(\sigma) \ge \dim(\Delta)-2$
and
$\kk[\lk_\Delta(x_i)]$ is quasi-Gorenstein for any cone point of $\Delta$~(e.g., there is no cone point of $\Delta$)
and $[\tr_R(\omega_{R})]_1 \neq (0)$.
\end{itemize}
\end{theorem}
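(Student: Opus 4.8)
The plan is to prove the four conditions equivalent by running the cyclic chain $(1)\Rightarrow(2)\Rightarrow(3)\Rightarrow(4)\Rightarrow(1)$. A useful preliminary observation is that the degree-one requirement $[\tr_R(\omega_R)]_1\neq(0)$ occurs verbatim in (2), (3) and (4), while for (1) it holds automatically because $[\tr_R(\omega_R)]_1=R_1\neq(0)$ (since $\dim(\Delta)\ge 2$ forces $\Delta$ to have vertices); hence in each step only the ``structural half'' of the statement needs to be transported. The implication $(1)\Rightarrow(2)$ is then immediate, since $\tr_R(\omega_R)=R\supseteq\MM_R$.

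For $(2)\Rightarrow(3)$ I would show that $\tr_R(\omega_R)\supseteq\MM_R$ forces $R$ to be quasi-Gorenstein on the punctured spectrum. Since $\Delta$ is normal it is pure, so ${}^*\Supp(\omega_R)={}^*\Spec(R)$ by \autoref{rem:gradedaoyamagoto}~(2); and for any graded prime $\pp\subsetneq\MM_R$ we have $\pp\not\supseteq\MM_R\supseteq\tr_R(\omega_R)$, so $R_\pp$ is quasi-Gorenstein by \autoref{rem:gradedaoyamagoto}~(5). For $(3)\Rightarrow(4)$, purity of $\Delta$ lets me apply \autoref{lem:1} to $R$, which is quasi-Gorenstein on the punctured spectrum: this gives that $\kk[\lk_\Delta(\sigma)]$ is quasi-Gorenstein for every nonempty face $\sigma\in\Delta$. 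As $\dim(\Delta)\ge 2$, every $\sigma$ with $\dim(\sigma)\ge\dim(\Delta)-2$ is nonempty, and any cone point is a vertex, hence nonempty, so condition (4) follows.

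The heart of the argument is $(4)\Rightarrow(1)$, where I would distinguish two cases. If $\Delta$ has a cone point $x_i$, then $R\cong\kk[\lk_\Delta(x_i)][x_i]$, and since $\kk[\lk_\Delta(x_i)]$ is quasi-Gorenstein by hypothesis, $R$ is quasi-Gorenstein by \autoref{rem:QGPolyExtension}. If $\Delta$ has no cone point, then, $\Delta$ being normal, $\Delta=\lk_\Delta(\emptyset)$ is strongly connected by \cite[Proposition 11.7]{Bj}; together with $\dim(\Delta)\ge 2$ and the hypothesis that $\kk[\lk_\Delta(\sigma)]$ is quasi-Gorenstein for all $\sigma$ with $\dim(\sigma)\ge\dim(\Delta)-2$, \autoref{prop:main1} applies and tells us that $\Delta$ is a normal pseudomanifold and that either $R$ is quasi-Gorenstein or $\tr_R(\omega_R)\subseteq\MM_R^2$. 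The second alternative cannot occur, since it would force $[\tr_R(\omega_R)]_1\subseteq[\MM_R^2]_1=(0)$, contradicting (4); hence $R$ is quasi-Gorenstein.

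The main obstacle is the no-cone-point subcase of $(4)\Rightarrow(1)$: it rests on \autoref{prop:main1}, and therefore on the non-orientability dichotomy for normal pseudomanifolds \cite[Theorem 4.4]{varbaro2024lefschetz} combined with \autoref{p:nonCMnearlyG=G} and, ultimately, the Teter-type characterization \autoref{Thm:nonCMTeter}. The degree-one hypothesis $[\tr_R(\omega_R)]_1\neq(0)$ is exactly the device that rules out the ``$\tr_R(\omega_R)\subseteq\MM_R^2$'' branch; the remaining implications are routine bookkeeping with purity, localization of the quasi-Gorenstein property, and the structure of cone points.
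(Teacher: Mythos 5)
Your proposal is correct and follows essentially the same route as the paper: the cyclic chain $(1)\Rightarrow(2)\Rightarrow(3)\Rightarrow(4)\Rightarrow(1)$, using purity of a normal complex, the identification of $\tr_R(\omega_R)$ with the non-quasi-Gorenstein locus, \autoref{lem:1} for the links, and \autoref{prop:main1} together with the hypothesis $[\tr_R(\omega_R)]_1\neq(0)$ to exclude the $\tr_R(\omega_R)\subseteq\MM_R^2$ branch. Your explicit treatment of the cone-point case via \autoref{rem:QGPolyExtension} is exactly what the paper's appeal to \autoref{rem:noconepoint} amounts to.
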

\begin{proof}
$(1) \Rightarrow (2)$ is clear.
$(2) \Rightarrow (3)$ follows from \autoref{rem:gradedaoyamagoto}~(6).
We prove \((3)\Rightarrow(4)\). By \autoref{lem:1}, it is enough to show that \(\Delta\) is strongly connected. Since \(R\) is quasi-Gorenstein on the punctured spectrum, \autoref{rem:sr-S2-punctured} implies that \(\Delta\) is normal. Hence \(\Delta\) is strongly connected, because \(\Delta\) is normal and connected; see \cite[Proposition~11.7]{Bj}.
We now show the implication \( (4) \Rightarrow (1) \).  
Note that under this assumption, \(\kk[\lk_\Delta(\sigma)]\) is Gorenstein for any \( \sigma \in \Delta \) such that \( \dim(\sigma) \ge \dim(\Delta) - 2 \) because $\dim(\lk_\Delta(\sigma)) \le 1$.
Furthermore, we may assume that \( \Delta \) has no cone point by \autoref{rem:noconepoint}.
In this case, by \autoref{prop:main1}, we have either \( R \) is quasi-Gorenstein or \( \tr_R(\omega_R) \subseteq \MM_R^2 \).  
However, the condition \( \tr_R(\omega_R) \subseteq \MM_R^2 \) contradicts to \( [\tr_R(\omega_R)]_1 \neq 0 \).  
Thus, we conclude that \( R \) is quasi-Gorenstein.
\end{proof}

\begin{remark}
\autoref{thm:good1} is a generalization of \cite[Corollary 3.5]{miyashita2024canonical}.
\autoref{thm:good1}~(4) does not even assume that 
$R$ is quasi-Gorenstein on the punctured spectrum.
\end{remark}

\begin{corollary}[{cf. \cite[Corollary 3.5]{miyashita2024canonical}}]
Let $\Delta$ be a connected simplicial complex with $\dim(\Delta) \ge 2$.
If $\tr_{\kk[\Delta]}(\omega_{\kk[\Delta]})
\supseteq \mm_{\kk[\Delta]}$,
then $\kk[\Delta]$ is quasi-Gorenstein.
\end{corollary}
\begin{proof}
It follows from (3) $\Rightarrow$ (1) of \autoref{thm:good1}.
\end{proof}

The following result is a generalization of \cite[Theorem A]{miyashita2024canonical}.

\begin{theorem}[\autoref{thm:canonicalsquareee}]\label{thm:canonicalsquare}
Let $\Delta$ be a connected simplicial complex and set $R=\kk[\Delta]$.
Then the following hold:
\begin{itemize}
\item[\rm (1)] 
If $\sqrt{\tr_R(\omega_R)} \supseteq \mm_R$, then $\tr_R(\omega_R) \supseteq \mm_R$ or
$[\tr_R(\omega_R)]_1 =(0)$.
In the latter case, $\Delta$ is a non-orientable pseudomanifold;
\item[\rm (2)]
Assume that $R$ is Cohen--Macaulay on the punctured spectrum.
Then
$\sqrt{\tr_R(\omega_R)} \supseteq \mm_R$
if and only if
$\tr_R(\omega_R) = \MM_R^i$ for some $i \in \{0,1,2\}$;
\item[\rm (3)]
The following are equivalent:
\begin{itemize}
\item[\rm (a)] $\tr_R(\omega_{R})=\MM_R$;
\item[\rm (b)] $R$ is Cohen--Macaulay and $\tr_R(\omega_R)=\MM_R$ (i.e., $R$ is non-Gorenstein and nearly Gorenstein in the sense of \cite{herzog2019trace});
\item[\rm (c)] $\Delta$ is isomorphic to a path of length $n \geq 3$.
\end{itemize}
\item[\rm (4)]
$\tr_R(\omega_R)=\mm_R^2$ and $R$ is Cohen--Macaulay on the punctured spectrum if and only if $\Delta$ is a non-orientable $\kk$-homology manifold.
\begin{itemize}
\item[\rm (a)] $R$ is Cohen--Macaulay on the punctured spectrum and $\tr_R(\omega_R)=\mm_R^2$;
\item[\rm (b)] $R$ is Gorenstein on the punctured spectrum and $\tr_R(\omega_R)=\mm_R^2$;
\item[\rm (c)] $\Delta$ is a non-orientable $\kk$-homology manifold.
\end{itemize}
\end{itemize}
\end{theorem}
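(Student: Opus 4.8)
The plan is to reduce everything to the dichotomy in part~(1) together with the structural results already proved, handling the cases $\dim(\Delta)\le 1$ (where $R=\kk[\Delta]$ is Cohen--Macaulay) by hand and isolating the one genuinely new point. Since $R$ is $(S_2)$, $\Delta$ is normal, hence pure, so $R$ is unmixed and \autoref{Rem:useful?}~(2) applies: $\sqrt{\tr_R(\omega_R)}\supseteq\MM_R$ iff $R$ is quasi-Gorenstein on the punctured spectrum. Moreover a normal complex of positive dimension is connected, so when $\dim(\Delta)\le 1$ the ring $R$ is Cohen--Macaulay by Reisner's criterion; then the one-dimensional statement is \cite[Theorem~A]{miyashita2024canonical}, while $\dim(\Delta)=0$ is trivial ($R=\kk[x]$). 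For $\dim(\Delta)\ge 2$ I will use \autoref{thm:good1}, \autoref{prop:main1}, \autoref{p:nonCMnearlyG=G}, the corollary preceding \autoref{thm:canonicalsquare}, \autoref{lem:1}, \autoref{rem:noconepoint} and \autoref{rem:QGPolyExtension}.

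First I would prove~(1). Assume $R$ is quasi-Gorenstein on the punctured spectrum, and take $\dim(\Delta)\ge 2$. If $[\tr_R(\omega_R)]_1\ne 0$, then $R$ is quasi-Gorenstein by \autoref{thm:good1}~$(3)\Rightarrow(1)$, so $\tr_R(\omega_R)=R\supseteq\MM_R$. If $[\tr_R(\omega_R)]_1=0$, then $R$ is not quasi-Gorenstein; \autoref{rem:noconepoint} shows $\Delta$ has no cone point, \autoref{lem:1} shows $\kk[\lk_\Delta(\sigma)]$ is quasi-Gorenstein for every $\sigma\ne\emptyset$, $\Delta$ is strongly connected by \cite[Proposition~11.7]{Bj}, so \autoref{prop:main1} makes $\Delta$ a pseudomanifold, and non-orientability follows from \cite[Theorem~4.4]{varbaro2024lefschetz}. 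This also records the dichotomy I use below: for $\dim(\Delta)\ge 2$ and $R$ quasi-Gorenstein on the punctured spectrum, either $\tr_R(\omega_R)=R$, or $\Delta$ is a non-orientable normal pseudomanifold with no cone point and all proper links quasi-Gorenstein, in which case $\tr_R(\omega_R)\subseteq\MM_R^2$ by \autoref{prop:main1} (equivalently \autoref{p:nonCMnearlyG=G}).

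Parts~(2) and~(3) then fall out. For~(3): $\tr_R(\omega_R)=\MM_R$ forces $\sqrt{\tr_R(\omega_R)}=\MM_R$ and $[\tr_R(\omega_R)]_1=R_1\ne 0$, so by the dichotomy $\dim(\Delta)\le 1$, and since $\dim(\Delta)=0$ is excluded we get $\dim(\Delta)=1$ with $R$ Cohen--Macaulay; hence (a)$\Leftrightarrow$(b), and (b)$\Leftrightarrow$(c) is the graph statement of \cite[Theorem~A]{miyashita2024canonical} (Gorenstein on the punctured spectrum forces every vertex link to be a Gorenstein $0$-complex, i.e.\ to have at most two vertices, so $\Delta$ is a path or a cycle; cycles and short paths are Gorenstein, leaving paths of length $\ge 3$, which have $\tr_R(\omega_R)=\MM_R$). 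For~(2), ``$\tr_R(\omega_R)=\MM_R^i$ with $i\in\{0,1,2\}\Rightarrow R$ Gorenstein on the punctured spectrum'' follows from $\sqrt{\tr_R(\omega_R)}\supseteq\MM_R$: if $\dim(\Delta)\le 1$ then $R$ is Cohen--Macaulay and we are done; if $i=0$ and $\dim(\Delta)\ge 2$, descend through vertex links using \autoref{lem:1} and \autoref{rem:QGPolyExtension} to the Cohen--Macaulay one-dimensional base case (equivalently, use that a normal quasi-Gorenstein Stanley--Reisner ring is Gorenstein); and $i=1$, resp.\ $i=2$, put us in~(3), resp.~(4). Conversely, $R$ Gorenstein on the punctured spectrum gives, by the dichotomy, either $\tr_R(\omega_R)=R$, or $\dim(\Delta)=1$ with $\Delta$ a path and $\tr_R(\omega_R)=\MM_R$, or $\dim(\Delta)\ge 2$ and $\tr_R(\omega_R)\subseteq\MM_R^2$; in the last case the full Gorenstein hypothesis on the punctured spectrum forces $\Delta$ to be a non-orientable homology manifold (the argument for~(4) below), and then~(4) gives $\tr_R(\omega_R)=\MM_R^2$.

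Finally~(4): $(b)\Rightarrow(a)$ is trivial, and $(a)\Rightarrow(b)$ because $\tr_R(\omega_R)=\MM_R^2$ gives $\sqrt{\tr_R(\omega_R)}=\MM_R$, hence quasi-Gorenstein on the punctured spectrum, which with ``Cohen--Macaulay on the punctured spectrum'' is ``Gorenstein on the punctured spectrum''. For $(b)\Rightarrow(c)$: $\tr_R(\omega_R)=\MM_R^2$ forces $[\tr_R(\omega_R)]_1=0$ and $R$ not quasi-Gorenstein, so by~(1) $\Delta$ is a non-orientable normal pseudomanifold with $\dim(\Delta)\ge 2$; ``Gorenstein on the punctured spectrum'' makes each $\kk[\lk_\Delta(\sigma)]$ with $\sigma\ne\emptyset$ Gorenstein, and in a pseudomanifold no such link has a cone point (if $w$ were a cone point of $\lk_\Delta(\sigma)$, pick any facet $\tau$ of that link, so $w\in\tau$; then the codimension-one face $\tau\setminus\{w\}$ --- which corresponds to a codimension-one face of $\Delta$ containing $\sigma$ --- would lie in exactly one facet of $\lk_\Delta(\sigma)$, namely $\tau$, contradicting that such faces lie in exactly two facets), so each such link is a genuine $\kk$-homology sphere and $\Delta$ is a $\kk$-homology manifold. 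For $(c)\Rightarrow(a)$: a non-orientable $\kk$-homology manifold is a normal pseudomanifold with all vertex links homology spheres, so $R$ is Cohen--Macaulay on the punctured spectrum and $\tr_R(\omega_R)\subseteq\MM_R^2$ by \autoref{p:nonCMnearlyG=G}; the missing inclusion $\tr_R(\omega_R)\supseteq\MM_R^2$ --- equivalently, by the corollary preceding \autoref{thm:canonicalsquare}, the equality $\tr_R(\omega_R)=\MM_R^2$ --- is the only step not already formal, and I would obtain it by computing $\omega_R\cong{}^*\Ext_S^{n-\dim R}(R,S)$ (\autoref{gotowatanabe}) through Hochster's/Gräbe's description of the Buchsbaum ring of a homology manifold, reading off a graded surjection onto $\MM_R^2$, or by invoking known structure results for Stanley--Reisner rings of homology manifolds. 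I expect that last inclusion to be the main obstacle: every other implication is an assembly of the cited lemmas plus the one-dimensional Cohen--Macaulay case, whereas pinning down $\tr_R(\omega_R)=\MM_R^2$ in the non-orientable homology-manifold case requires a genuine computation with the canonical module.
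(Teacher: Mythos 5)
Your overall architecture coincides with the paper's: part (1) is obtained from \autoref{prop:main1}, \autoref{rem:noconepoint}, \autoref{p:nonCMnearlyG=G} and \cite[Theorem 4.4]{varbaro2024lefschetz} when $\dim(\Delta)\ge 2$, the low-dimensional cases are handled via Cohen--Macaulayness and \cite[Theorem A]{miyashita2024canonical}, and (2)--(4) are assembled from the resulting dichotomy. Your direct argument that in a pseudomanifold no proper link has a cone point (so that Gorenstein proper links are genuine $\kk$-homology spheres) is a correct, self-contained substitute for the paper's citation of \cite[Lemma 2.6]{miyashita2024canonical}. However, the proposal has a genuine gap at precisely the step you flag: the inclusion $\MM_R^2\subseteq\tr_R(\omega_R)$ when $\Delta$ is a non-orientable $\kk$-homology manifold is never proved, only a strategy is sketched. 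This step is needed not only for (4)(c)$\Rightarrow$(a) but also for your proof of the forward direction of (2) (the case $\tr_R(\omega_R)\subseteq\MM_R^2$). The paper closes it by quoting \cite[Proposition 3.9]{miyashita2024canonical}; without that result, or an actual computation of $\omega_R$ via Gr\"abe's description, parts (2) and (4) remain open.

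A second, sharper problem occurs in your reverse implication of (2) for $i=0$: the assertion that a normal quasi-Gorenstein Stanley--Reisner ring is Gorenstein is false. The Stanley--Reisner ring of a triangulated torus is quasi-Gorenstein (an orientable normal pseudomanifold, so \cite[Theorem 4.4]{varbaro2024lefschetz} applies) but is not Cohen--Macaulay, hence not Gorenstein. For the same reason your ``descent through vertex links'' does not work as stated: \autoref{lem:1} and \autoref{rem:QGPolyExtension} only give that the proper links are \emph{quasi}-Gorenstein, and quasi-Gorensteinness of a link does not imply its Gorensteinness (again the torus, appearing as a vertex link of the cone over a triangulated torus, which is still a normal connected complex). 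So your argument establishes quasi-Gorensteinness on the punctured spectrum, not Gorensteinness. Be aware that the paper's own proof of this direction simply invokes \autoref{Rem:useful?}~(2), which likewise yields only quasi-Gorensteinness on the punctured spectrum; the passage from quasi-Gorenstein to Gorenstein on the punctured spectrum is exactly the delicate point here and must either be argued (e.g.\ by forcing the links to be Cohen--Macaulay) or the statement adjusted.
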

\begin{proof}
(1):
Assume that  $R$ is quasi-Gorenstein on the punctured spectrum.
Note that $\Delta$ is
strongly connected
by \cite[Proposition 11.7]{Bj}.
If $\dim(R) \le 1$, then $R$ is Cohen--Macaulay because $\Delta$ is connected. Thus the claim follows from \cite[Theorem~A (X)]{miyashita2024canonical}.
If $\dim(\Delta) \geq 2$, then by \autoref{prop:main1} and \autoref{rem:noconepoint}, either $R$ is quasi-Gorenstein or $\Delta$ is a pseudomanifold.~
In the case where $R$ is quasi-Gorenstein, we obtain $\tr_R(\omega_R) = R$.  
Otherwise,
$\Delta$ is a normal pseudomanifold.
According to \cite[Theorem 4.4]{varbaro2024lefschetz},
$R$ is quasi-Gorenstein if $\Delta$ is $\kk$-orientable.
Thus we have either $\tr_R(\omega_R)=R$ or $\tr_R(\omega_R) \subseteq \MM_R^2$ by
\autoref{p:nonCMnearlyG=G}.
Therefore, in each case, it follows that
$\tr_R(\omega_R) \subseteq \mm_R$,
or $[\tr_R(\omega_R)]_1 =(0)$
and $\Delta$ is a non-orientable pseudomanifold.

(2):
Assume $\sqrt{\tr_R(\omega_R)} \supseteq \mm_R$.
Then
\( R \) is Gorenstein on the punctured spectrum by \autoref{rem:gradedaoyamagoto}~(6).
If \( \tr_R(\omega_R) \not\supseteq \mm_R \), then \( \Delta \) is a non-orientable normal pseudomanifold and \( \tr_R(\omega_R) \subseteq \mm_R^2 \) by (1) and \autoref{p:nonCMnearlyG=G}.  
Thus \( \Delta \) is a \( \kk \)-homology manifold by by \cite[Lemma~2.7]{miyashita2024canonical}.
Then we have \( \tr_R(\omega_R) = \mm_R^2 \) by \cite[Proposition~3.10]{miyashita2024canonical}.
Therefore, \( \tr_R(\omega_R) = \mm_R^i \) for some \( i \in \{0,1,2\} \).
The converse is clear.

(3):
$(b) \Leftrightarrow (c)$ is known from \cite[Theorem A~(Y)]{miyashita2024canonical}.  
It suffices to prove that $(a) \Rightarrow (b)$.  
Note that
$\Delta$ is normal by \autoref{rem:gradedaoyamagoto}~(6) and \autoref{rem:sr-S2-punctured}.
Thus $\Delta$ is pure by \cite[Proposition 11.7]{Bj}.
Assume that \( \dim(\Delta) \geq 2 \).
Then \( \tr_R(\omega_R) = R \) by \autoref{thm:good1}, which leads to a contradiction.
Thus we have \( \dim(\Delta) \le 1 \).
Then \( R \) is Cohen--Macaulay because \( \Delta \) is connected.

(4):
\((a) \Rightarrow (b)\) follows from (2).  
We now prove \((b) \Rightarrow (c)\).  
First, suppose that \(\dim(\Delta) \le 1\).  
Since \(\Delta\) is connected, \(R\) is Cohen--Macaulay.  
In this case, by \cite[Theorem 4.3~(b)]{miyashita2024levelness}, \(R\) is nearly Gorenstein.  
However, since \(\operatorname{tr}_R(\omega_R) = \mathfrak{m}_R^2 \ne \mathfrak{m}_R\), this leads to a contradiction.  
Therefore, we may assume \(\dim(\Delta) \ge 2\).  
Now, since \(\operatorname{tr}_R(\omega_R) = \mathfrak{m}_R^2 \ne R\), \(R\) is not quasi-Gorenstein by \autoref{rem:gradedaoyamagoto}~(4).
Then $\Delta$ has no cone point by \autoref{rem:noconepoint}.
Hence, \(\Delta\) is a pseudomanifold by \autoref{prop:main1}.
It follows from \cite[Lemma 2.7]{miyashita2024canonical} that $\Delta$ is a $\kk$-homology manifold.
Note that $\Delta$ is normal. Moreover, according to \cite[Theorem 4.4]{varbaro2024lefschetz} and \autoref{rem:gradedaoyamagoto}~(4),
$\tr_R(\omega_R)=R$ if $\Delta$ is $\kk$-orientable.
Therefore $\Delta$ is non $\kk$-orientable because $\tr_R(\omega_R) \neq R$.
Lastly, we show $(c)\Rightarrow(a)$.
Since $\Delta$ is a $\kk$-homology manifold,
then $R$ is Gorenstein on the punctured spectrum by definition.
Thus it is Cohen--Macaulay on the punctured spectrum.
Notice that $\Delta$ is a normal pseudomanifold
because it is a $\kk$-homology manifold.
Then we have $\tr_R(\omega_R)=\mm_R^2$ by \autoref{p:nonCMnearlyG=G} and \cite[Proposition 3.10]{miyashita2024canonical}.
\end{proof}


\begin{corollary}\label{thm:good3}
 Let $\Delta$ be a connected simplicial complex and let $R=\kk[\Delta]$.
If $\tr_R(\omega_R)=\MM_R$,
then $\dim(\Delta) \le 1$.
In particular, $R$ is Cohen--Macaulay.
\begin{proof}
This follows from \autoref{thm:canonicalsquare}~(3).
\end{proof}
\end{corollary}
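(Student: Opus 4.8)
The plan is to deduce everything from \autoref{thm:canonicalsquare}~(3), which already carries out the combinatorial work; the only point needing a word of justification is that \autoref{thm:canonicalsquare} is phrased under the hypothesis that $R$ is $(S_2)$, whereas here we assume only that $\Delta$ is normal and connected. This gap is harmless: as recorded in the preliminary remarks, a simplicial complex $\Delta$ is normal if and only if $\kk[\Delta]$ satisfies Serre's condition $(S_2)$. So the first step is simply to observe that $R = \kk[\Delta]$ is $(S_2)$, and hence \autoref{thm:canonicalsquare} applies to $\Delta$ without further ado.

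Next I would invoke the equivalence $(a)\Leftrightarrow(c)$ in \autoref{thm:canonicalsquare}~(3): the hypothesis $\tr_R(\omega_R)=\MM_R$ forces $\Delta$ to be isomorphic to a path of length $n\ge 3$. Since a path is a one-dimensional simplicial complex, this yields $\dim(\Delta)=1$, in particular $\dim(\Delta)\le 1$, which is the first assertion. For the final clause I would read off Cohen--Macaulayness directly from the equivalence $(a)\Leftrightarrow(b)$ of \autoref{thm:canonicalsquare}~(3), which states that $\tr_R(\omega_R)=\MM_R$ is equivalent to ``$R$ is Cohen--Macaulay and $\tr_R(\omega_R)=\MM_R$''; alternatively, one may note directly that the Stanley--Reisner ring of a connected graph (a one-dimensional simplicial complex) is Cohen--Macaulay by Reisner's criterion.

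There is essentially no obstacle here: the statement is a formal consequence of \autoref{thm:canonicalsquare}~(3), and the only nontrivial input is the standard translation between normality of $\Delta$ and the $(S_2)$ condition on $\kk[\Delta]$. The proof is thus a two-line affair once \autoref{thm:canonicalsquare} is available.
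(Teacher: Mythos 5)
Your proposal is correct and follows exactly the paper's route: the paper's proof is the single citation of \autoref{thm:canonicalsquare}~(3), and you use the same equivalences $(a)\Leftrightarrow(c)$ and $(a)\Leftrightarrow(b)$, merely making explicit the (true and recorded-in-the-paper) fact that normality of $\Delta$ supplies the $(S_2)$ hypothesis of that theorem.
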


The following is the second main result of this paper.

\begin{theorem}\label{thm:interesting???}
Fix $2 \le n \in \ZZ$.
For all \( 1 \leq i \leq n \),
let $\Delta_i$ be a connected simplicial complex and let $A_i=\kk[\Delta_i]$.
Set $\Delta=\bigsqcup_{i=1}^n \Delta_i$ and
$R=\kk[\Delta]$, and assume that $\Delta$ is not the discrete simplicial complex on two vertices.
Then the following hold:
\begin{itemize}
\item[\rm (1)] Suppose that $R$ is Cohen--Macaulay on the punctured spectrum.
Then the following are equivalent:
\begin{itemize}
\item[\rm (a)] $\tr_R(\omega_R)$ is $\MM_R$-primary;
\item[\rm (b)] $\tr_{A_i}(\omega_{A_i}) \in \{A_i, \mm_{A_i}, \mm_{A_i}^2\}$ and \( \dim(\Delta_i) = \dim(\Delta) \) for any $i=1,\cdots,n$;
\item[\rm (c)] $\tr_R(\omega_R) \supseteq \MM_R^2$.
\end{itemize}
\item[\rm (2)] The following are equivalent:
\begin{itemize}
\item[\rm (a)] $\tr_R(\omega_R)=\MM_R$;
\item[\rm (b)] $\tr_{A_i}(\omega_{A_i}) \supseteq \MM_{A_i}$ and $\dim(\Delta_i)=\dim(\Delta)$ for
any $1 \le i \le n$;
\item[\rm (c)]
The following hold;
\begin{itemize}
\item[\rm (i)] $\dim(\Delta_i)=\dim(\Delta)$ for any $1 \le i \le n$,
\item[\rm (ii)] \( A_i \) is quasi-Gorenstein or \( \Delta_i \) is isomorphic to a path for any $1 \le i \le n$.
\end{itemize}
\end{itemize}
\item[\rm (3)] 
Suppose that $R$ is Cohen--Macaulay on the punctured spectrum.
Then the following are equivalent:
\begin{itemize}
\item[\rm (a)] $\tr_R(\omega_R)=\MM_R^2$;
\item[\rm (b)] $\tr_{A_i}(\omega_{A_i})=\MM_{A_i}^2$ and \( \dim(\Delta_i) = \dim(\Delta) \) for any \( 1 \leq i \leq n \);
\item[\rm (c)] $\Delta_i$ is a $\kk$-non-orientable $\kk$-homology manifold and \( \dim(\Delta_i) = \dim(\Delta) \) for any \( 1 \leq i \leq n \).
\end{itemize}
\end{itemize}
\end{theorem}
\begin{proof}
(1):
$(a) \Rightarrow (b)$:
By \autoref{MainTHM:B}~(2),
we have $\sqrt{\tr_{A_i}(\omega_{A_i})}_{A_i} \supseteq \MM_{A_i}$ and \( \dim(\Delta_i) = \dim(\Delta) \) for any $i = 1, \ldots, n$.  
Thus we have $\tr_{A_i}(\omega_{A_i}) \in \{ A_i, \MM_{A_i}, \MM_{A_i}^2 \}$ by \autoref{thm:canonicalsquare}~(2).
$(b) \Rightarrow (c)$:
Notice that we have $\tr_{A_i}^{\dagger}(\omega_{A_i}) \supseteq \MM_{A_i}^2$ for any $i = 1, \ldots, n$.
Then we have
$\tr_R(\omega_R) = \bigoplus_{i=1}^n \tr_{A_i}^{\dagger}(\omega_{A_i})R \supseteq \bigoplus_{i=1}^n \MM_{A_i}^2R = \MM_R^2$ by \autoref{MainTHM:B}~(1).
$(c) \Rightarrow (a)$:
Either $\tr_R(\omega_R) = R$ or $\sqrt{\tr_R(\omega_R)}_R = \mm_R$ holds from the assumption.
Since $\tr_R(\omega_R) =
\mm_R$ by \autoref{MainTHM:B}~(1),
we have $\sqrt{\tr_R(\omega_R)}_R = \mm_R$.


(2):
$(a) \Leftrightarrow (b)$ follows by \autoref{MainTHM:B}~(3).
\((b) \Leftrightarrow (c)\):
We may assume that $\dim(\Delta_i)=\dim(\Delta)$ for any $1\le i \le n$.
If \( \dim(\Delta) \ge 2 \), then
$\tr_{A_i}(\omega_{A_i})$ if and only if $A_i$ is quasi-Gorenstein by \autoref{thm:good3}.
If \( \dim(\Delta) \le 1 \), the assertion follows from \autoref{thm:canonicalsquare}~(3).

(3):
$(a) \Rightarrow (b)$:
By $(a) \Rightarrow (b)$ of (1),  
we have $\tr_{A_i}(\omega_{A_i}) \in \{ A_i, \mm_{A_i}, \mm_{A_i}^2 \}$ and \( \dim(\Delta_i) = \dim(\Delta) \) for any $i = 1, \ldots, n$.
Suppose, for contradiction, that there exists some $1 \le i_0 \le n$ such that  
$\tr_{A_{i_0}}(\omega_{A_{i_0}}) \in \{ A_{i_0}, \mm_{A_{i_0}} \}$.
Then we have
$\tr_{A_{i_0}}^{\dagger}(\omega_{A_{i_0}}) = \MM_{A_{i_0}}$
by definition.
On the other hand, since $\tr_R(\omega_R) = \MM_R^2$, we have
$\tr_{A_{i_0}}^{\dagger}(\omega_{A_{i_0}}) =  \MM_{A_{i_0}}^2$ by \autoref{lem:extension}~(1) and \autoref{MainTHM:B}~(1),
which is a contradiction.
Therefore, we obtain $\tr_{A_i}(\omega_{A_i}) = \mm_{A_i}^2$ for any $i = 1, \ldots, n$.
$(b)\Rightarrow (a)$:
By \autoref{MainTHM:B},
we have $\tr_R(\omega_R)=\bigoplus_{i=1}^n \tr_{R}(\omega_{A_i})=\bigoplus_{i=1}^n \mm_{A_i}^2 R=\MM_R^2$.
$(b) \Leftrightarrow (c)$:
This follows from \autoref{thm:canonicalsquare}~(4).
\end{proof}


\section*{Acknowledgments}
The first author was supported by JSPS KAKENHI Grant Number 24K16909.
The second author was supported by JST SPRING, Japan Grant Number JPMJSP2138.

\begin{bibdiv}
\begin{biblist}*{labels={shortalphabetic}}

\bib{aoyama1983some}{article}{
      author={Aoyama, Yoichi},
       title={Some basic results on canonical modules},
        date={1983},
     journal={Journal of Mathematics of Kyoto University},
      volume={23},
      number={1},
       pages={85\ndash 94},
}

\bib{aoyama1985endomorphism}{article}{
      author={Aoyama, Yoichi},
      author={Goto, Shiro},
      author={Others},
       title={On the endomorphism ring of the canonical module},
        date={1985},
     journal={Journal of Mathematics of Kyoto University},
      volume={25},
      number={1},
       pages={21\ndash 30},
}

\bib{atiyah2018introduction}{book}{
      author={Atiyah, Michael~F},
      author={Macdonald, Ian~Grant},
       title={Introduction to commutative algebra},
   publisher={CRC Press},
        date={2018},
}

\bib{bagherpoor2023trace}{article}{
      author={Bagherpoor, Mohammad},
      author={Taherizadeh, Abdoljavad},
       title={Trace ideals of semidualizing modules and two generalizations of nearly Gorenstein rings},
        date={2023},
     journal={Communications in Algebra},
      volume={51},
      number={2},
       pages={446\ndash 463},
}

\bib{Bj}{book}{
      author={Bj\"orner, A},
       title={Topological methods, in: Handbook of combinatorics},
}

\bib{bruns1998cohen}{book}{
      author={Bruns, W.},
      author={Herzog, J.},
       title={Cohen--Macaulay rings},
   publisher={Cambridge university press},
        date={1998},
      number={39},
}

\bib{caminata2021nearly}{article}{
      author={Caminata, Alessio},
      author={Strazzanti, Francesco},
       title={Nearly Gorenstein cyclic quotient singularities},
        date={2021},
     journal={Beitr{\"a}ge zur Algebra und Geometrie/Contributions to Algebra and Geometry},
      volume={62},
      number={4},
       pages={857\ndash 870},
}

\bib{celikbas2023traces}{article}{
      author={Celikbas, Ela},
      author={Herzog, J{\"U}rgen},
      author={Kumashiro, Shinya},
       title={Traces of semi-invariants},
        date={2023},
     journal={arXiv preprint arXiv:2312.00983},
}

\bib{dao2020trace}{article}{
      author={Dao, Hailong},
      author={Kobayashi, Toshinori},
      author={Takahashi, Ryo},
       title={Trace ideals of canonical modules, annihilators of Ext modules, and classes of rings close to being Gorenstein},
        date={2021},
     journal={Journal of Pure and Applied Algebra},
      volume={225},
      number={9},
       pages={106655},
}

\bib{endo2021almost}{article}{
      author={Endo, Naoki},
      author={Goto, Shiro},
      author={Isobe, Ryotaro},
       title={Almost Gorenstein rings arising from fiber products},
        date={2021},
     journal={Canadian Mathematical Bulletin},
      volume={64},
      number={2},
       pages={383\ndash 400},
}

\bib{ficarra2024canonical}{article}{
      author={Ficarra, Antonino},
       title={The canonical trace of Cohen--Macaulay algebras of codimension 2},
        date={2025},
     journal={Proceedings of the American Mathematical Society},
      volume={153},
      number={8},
       pages={3275\ndash 3289},
         doi={10.1090/proc/17250},
}

\bib{ficarra2024canonical!}{article}{
      author={Ficarra, Antonino},
      author={Herzog, J{\"U}rgen},
      author={Stamate, Dumitru~I},
      author={Trivedi, Vijaylaxmi},
       title={The canonical trace of determinantal rings},
        date={2024},
     journal={Archiv der Mathematik},
      volume={123},
      number={5},
       pages={487\ndash 497},
}

\bib{gasanova2022rings}{article}{
      author={Gasanova, Oleksandra},
      author={Herzog, Juergen},
      author={Hibi, Takayuki},
      author={Moradi, Somayeh},
       title={Rings of Teter type},
        date={2022},
     journal={Nagoya Mathematical Journal},
      volume={248},
       pages={1005\ndash 1033},
}

\bib{goto1978graded}{article}{
      author={Goto, Shiro},
      author={Watanabe, Keiichi},
       title={On graded rings, I},
        date={1978},
     journal={Journal of the Mathematical Society of Japan},
      volume={30},
      number={2},
       pages={179\ndash 213},
}

\bib{hall2023nearly}{article}{
      author={Hall, Thomas},
      author={K{\"o}lbl, Max},
      author={Matsushita, Koji},
      author={Miyashita, Sora},
       title={Nearly Gorenstein polytopes},
        date={2023},
     journal={Electronic Journal of Combinatorics},
}

\bib{hashimoto2023indecomposability}{article}{
      author={Hashimoto, Mitsuyasu},
      author={Yang, Yuntian},
       title={Indecomposability of graded modules over a graded ring},
        date={2023},
     journal={Kyoto Journal of Mathematics},
         note={to appear},
}

\bib{herzog2019trace}{article}{
      author={Herzog, J.},
      author={Hibi, T.},
      author={Stamate, D.~I.},
       title={The trace of the canonical module},
        date={2019},
     journal={Israel Journal of Mathematics},
      volume={233},
       pages={133\ndash 165},
}

\bib{herzog1971canonical}{article}{
      author={Herzog, J.},
      author={Kunz, E.},
       title={Die werthalbgruppe eines lokalen rings der dimension 1},
        date={1971},
     journal={Berichte der Heidelberger Akademie der Wissenschaften},
}

\bib{jafari2024nearly}{article}{
      author={Jafari, Raheleh},
      author={Strazzanti, Francesco},
      author={Armengou, Santiago~Zarzuela},
       title={On nearly Gorenstein affine semigroups},
        date={2026},
     journal={Journal of Algebra},
      volume={694},
       pages={676\ndash 702},
}

\bib{kimura2025trace}{article}{
      author={Kimura, Kaito},
       title={Trace ideals, conductors, and ideals of finite (phantom) projective dimension},
        date={2025},
     journal={arXiv preprint arXiv:2501.03442},
}

\bib{kumashiro2023trace}{article}{
      author={Kumashiro, Shinya},
       title={When are trace ideals finite?},
        date={2023},
     journal={Mediterranean Journal of Mathematics},
      volume={20},
      number={5},
       pages={278},
}

\bib{kumashiro2025nearly}{inproceedings}{
      author={Kumashiro, Shinya},
      author={Matsuoka, Naoyuki},
      author={Nakashima, Taiga},
       title={Nearly Gorenstein local rings defined by maximal minors of a 2$\times$ n matrix},
organization={Springer},
        date={2025},
   booktitle={Semigroup forum},
       pages={1\ndash 27},
}

\bib{lindo2017trace}{article}{
      author={Lindo, Haydee},
       title={Trace ideals and centers of endomorphism rings of modules over commutative rings},
        date={2017},
     journal={Journal of Algebra},
      volume={482},
       pages={102\ndash 130},
}

\bib{lu2024chain}{article}{
      author={Lu, Dancheng},
       title={The chain algebra of a pure poset},
        date={2025},
     journal={Journal of Algebraic Combinatorics},
      volume={62},
         doi={10.1007/s10801-025-01437-z},
}

\bib{lyle2024annihilators}{article}{
      author={Lyle, Justin},
      author={Maitra, Sarasij},
       title={Annihilators of (co) homology and their influence on the trace ideal},
        date={2024},
     journal={arXiv preprint arXiv:2409.04686},
}

\bib{matsumura1989commutative}{book}{
      author={Matsumura, Hideyuki},
       title={Commutative ring theory},
   publisher={Cambridge university press},
        date={1989},
      number={8},
}

\bib{miyashita2024levelness}{article}{
      author={Miyashita, Sora},
       title={Levelness versus nearly Gorensteinness of homogeneous rings},
        date={2024},
     journal={Journal of Pure and Applied Algebra},
      volume={228},
      number={4},
       pages={107553},
}

\bib{miyashita2024linear}{article}{
      author={Miyashita, Sora},
       title={A linear variant of the nearly Gorenstein property},
        date={2024},
     journal={arXiv preprint arXiv:2407.05629},
}

\bib{miyashita2025pseudo}{article}{
      author={Miyashita, Sora},
       title={When do pseudo-Gorenstein rings become Gorenstein?},
        date={2026},
     journal={Bulletin of the London Mathematical Society},
      volume={58},
      number={1},
       pages={e70186},
         doi={10.1112/blms.70186},
}

\bib{miyashita2024canonical}{article}{
  author    = {Miyashita, Sora}
  author = {Varbaro, Matteo},
  title     = {The canonical trace of Stanley--Reisner rings that are Gorenstein on the punctured spectrum},
  journal   = {International Mathematics Research Notices},
  volume   = {2025},
  number = {12},
  year   = {June 2025},
}

\bib{miyazaki2021Gorenstein}{article}{
      author={Miyazaki, Mitsuhiro},
       title={On the Gorenstein property of the Ehrhart ring of the stable set polytope of an h-perfect graph},
        date={2021},
     journal={International Electronic Journal of Algebra},
      volume={30},
      number={30},
       pages={269\ndash 284},
}

\bib{miyazaki2024non}{article}{
      author={Miyazaki, Mitsuhiro},
      author={Page, Janet},
       title={Non-Gorenstein loci of Ehrhart rings of chain and order polytopes},
        date={2024},
     journal={Journal of Algebra},
      volume={643},
       pages={241\ndash 283},
}

\bib{moscariello2025nearly}{article}{
      author={Moscariello, Alessio},
      author={Strazzanti, Francesco},
       title={Nearly Gorenstein numerical semigroups with five generators have bounded type},
        date={2025},
     journal={Communications in Algebra},
      volume={53},
      number={10},
       pages={4041\ndash 4052},
         doi={10.1080/00927872.2025.2455457},
}

\bib{ogoma1984existence}{article}{
      author={Ogoma, Tetsushi},
       title={Existence of dualizing complexes},
        date={1984},
     journal={Journal of Mathematics of Kyoto University},
      volume={24},
      number={1},
       pages={27\ndash 48},
}

\bib{stanley2007combinatorics}{book}{
      author={Stanley, Richard~P},
       title={Combinatorics and commutative algebra},
   publisher={Springer Science \& Business Media},
        date={2007},
      volume={41},
}

\bib{varbaro2024lefschetz}{article}{
      author={Varbaro, Matteo},
      author={Yu, Hongmiao},
       title={Lefschetz duality for local cohomology},
        date={2024},
     journal={Journal of Algebra},
      volume={639},
       pages={498\ndash 515},
}

\bib{villarreal2001monomial}{book}{
      author={Villarreal, Rafael~H},
       title={Monomial algebras},
   publisher={Marcel Dekker New York},
        date={2001},
      volume={238},
}

\end{biblist}
\end{bibdiv}

\end{document}